\numberwithin{equation}{subsection}
\newtheorem{theorem}[subsection]{Theorem}
\newtheorem{proposition}[subsection]{Proposition}
\newtheorem{lemma}[subsection]{Lemma}
\newtheorem{claim}[subsection]{Claim}
\newtheorem{corollary}[subsection]{Corollary}
\newtheorem{definition}[subsection]{Definition}
\theoremstyle{definition}
\newtheorem{remark}[subsection]{Remark}
\newtheorem{example}[subsection]{Example}
\newtheorem{notation}[subsection]{Notation}
\newcommand{\bC}{\mathbb{C}}
\newcommand{\spec}{\mathrm{Spec}\;}
\newcommand{\PP}{\mathbb{P}}
\newcommand{\bP}{\mathbb{P}}
\newcommand{\bA}{\mathbb{A}}
\newcommand{\SL}{\textrm{SL}}
\newcommand{\calO}{\mathcal{O}}
\newcommand{\calU}{\mathcal{U}}
\newcommand{\iso}{\cong}
\newcommand{\sslash}{\mathbin{/\mkern-6mu/}}
\newcommand{\R}{R}
\newcommand{\bl}{\textrm{Bl}_x(\bP^2)}
\newcommand{\ChowQ}{/\hspace{-1.2mm}/_{Ch}}
\begin{document}
%\title{Moduli Space of Weighted Hyperplane Arrangements and Blowups of Projective Space}
\title{
A generic slice of the moduli space of line arrangements
}
\author[Ascher]{Kenneth Ascher}
\email{kascher@mit.edu}
\author[Gallardo]{Patricio Gallardo}
\email{gallardo@uga.edu}
\address{ }
\email{}
\maketitle{}

\begin{abstract}
We study the compactification of the locus parametrizing lines having a fixed intersection with a given
line, inside the moduli space of line
arrangements in the projective plane constructed for weight one by Hacking-Keel-Tevelev and Alexeev for general weights.  We show that this space is smooth, with
normal crossing boundary, and that it has a morphism to the moduli space of
marked rational curves which can be understood as a natural
continuation of the blow up construction of Kapranov.  In addition, we prove that our space is isomorphic to a closed subvariety inside a non-reductive Chow quotient. 
\end{abstract}
\section{Introduction}

The compact moduli space of weighted hyperplane arrangements in $\bP^2$ is a  higher dimensional generalization of $\overline{M}_{0,n}$, and has a main component parameterizing  equivalence classes of $n$ weighted lines in $\PP^2$ and their log canonical degenerations.  The moduli space 
$\overline{M}_{\vec \beta}(\PP^2,n)$ was constructed for lines of weight one by Hacking-Keel-Tevelev \cite{hkt}, and for more general weights $\vec \beta$ as a generalization of the weighted Hassett spaces, by Alexeev \cite{alexeev2013moduli}. The space is expected to satisfy Murphy's law-- it can be arbitrarily singular, and  can contain many irreducible components.   The goal of this paper is to describe a naturally appearing locus inside this moduli space which has perhaps unexpected properties -- it is smooth with normal crossings boundary. 

Given an arrangement of $(n+1)$ labeled lines in $\PP^2$, there is a natural restriction morphism: label the line $l_{n+1}$ as $l_A$, and obtain an arrangement of $n$ labeled points on $l_A \cong \bP^1$, by intersecting the other $n$ lines with $l_A$. The restriction morphism  induces a morphism $M_{(\vec w,1)}(\bP^2, n+1) \to M_{0,\vec w}$ that has rational fibers of dimension $n-3$ (see Lemma \ref{dim}).  Given a generic point $q \in M_{0,\vec w}$, 
we study  the closure, which we denote by $\R_{\vec w}(q)$, in $ \overline M_{(\vec w,1)}(\bP^2, n+1) $ of the fiber  of $M_{(\vec w,1)}(\bP^2, n+1) \to M_{0,\vec w}$  over $q$ (see Definition \ref{def:Rn}).

In other words, $\R_{\vec w}(q)$ compactifies the locus parametrizing equivalence classes of $n+1$ labeled lines having a fixed intersection with the line $l_A$. Our first theorem characterizes $\R_{\vec w}(q)$.

\begin{theorem}[see Theorem \ref{thm:iso} and Theorem \ref{main}] \label{main1}
For weights $\vec w$ in the set of admissible weights $\mathcal D^R_{n}$
(see Definition \ref{def:adw}) and generic choice of $q \in M_{0, \vec w}$, the locus $\R_{\vec w}(q)$ is smooth with  normal crossings boundary and there are birational morphisms
$$
\centerline{\xymatrix{
\R_{ \vec w}(q)  \ar@{->}[r]^{\pi_2} &  \overline{M}_{0,\vec w}  \ar@{->}[r]^{\pi_1}
&
 \mathbb{P}^{n-3}.
}}
$$
\end{theorem}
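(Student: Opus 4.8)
The plan is to produce an explicit birational model of $\R_{\vec w}(q)$ as an iterated blow-up of $\mathbb{P}^{n-3}$ along smooth centers, and to read off all three assertions from it. The morphism $\pi_1$ is not new: by Kapranov and Hassett, $\overline{M}_{0,\vec w}$ sits between $\overline{M}_{0,n}$ and $\mathbb{P}^{n-3}$ in Kapranov's factorization of the blow-down $\overline{M}_{0,n}\to\mathbb{P}^{n-3}$, and the condition $\vec w\in\mathcal{D}^{R}_{n}$ (Definition \ref{def:adw}) is exactly what guarantees that we are in this range, so that the reduction morphism $\pi_1$ exists. Thus the substance is the smoothness of $\R_{\vec w}(q)$ and the normal crossings property of its boundary, together with the existence of the birational morphism $\pi_2$.

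First I would analyze the open fiber $F_q\subset M_{(\vec w,1)}(\bP^2,n+1)$ whose closure is $\R_{\vec w}(q)$ (Definition \ref{def:Rn}). Fixing the configuration $q$ on $l_A$ rigidifies the arrangement: each $l_i$ must pass through the prescribed point $p_i\in l_A$, so it varies in a pencil, and the residual symmetries form the subgroup $G\subset\PGL_3$ fixing $l_A$ pointwise, a non-reductive group of dimension $3$. Passing to the dual plane and blowing up the point $l_A^{\vee}$, the data becomes $n$ points lying on $n$ prescribed fibers of the Hirzebruch surface $\mathbb{F}_1\to\mathbb{P}^1$, taken modulo the fiberwise automorphisms; for generic $q$ this quotient is canonically an open dense subset of $\mathbb{P}^{n-3}$ (equivalently of $\overline{M}_{0,\vec w}$), which also explains why the non-reductive Chow quotient of Theorem \ref{thm:iso} is the correct ambient object and already produces $\pi_2$, and $\pi_1\circ\pi_2$, on a dense open set. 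The complement of $F_q$ is the union of loci where the arrangement acquires log canonical degenerations, and the heart of the argument is to show that $\R_{\vec w}(q)$ is obtained from $\mathbb{P}^{n-3}$ by blowing up, in order of increasing dimension, the smooth strata dictated by those degenerations. One checks that the first rounds of these blow-ups are precisely the point/line/plane blow-ups of Kapranov that produce $\overline{M}_{0,\vec w}$, and that the remaining ones are the "continuation" alluded to in the introduction; this is where the precise shape of $\mathcal{D}^{R}_{n}$ and the genericity of $q$ enter, since they are what keep every center smooth, irreducible, and in sufficiently general position for the successive strict transforms and exceptional divisors to cross normally.

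Granting this model, the three claims follow: smoothness and the normal crossings property are immediate, because iteratively blowing up smooth centers meeting the accumulated boundary normally preserves both; $\pi_1\circ\pi_2$ is the structural blow-down to $\mathbb{P}^{n-3}$, and it factors through $\overline{M}_{0,\vec w}$ because the latter is one of the intermediate stages, giving $\pi_2$; and $\pi_2$ is birational since it restricts to the identification of $F_q$ with an open subset of $\overline{M}_{0,\vec w}$. The point that still requires care is the identification of the abstract moduli space $\R_{\vec w}(q)$ with the blow-up tower: for this I would match the universal family of (possibly degenerate) arrangements over $\R_{\vec w}(q)$ with the tautological degenerate arrangement over each blow-up stage — equivalently, use the valuative criterion of properness inside $\overline{M}_{(\vec w,1)}(\bP^2,n+1)$, following the construction of \cite{alexeev2013moduli}, to see that a one-parameter family in $F_q$ has a unique log canonical limit whose combinatorics picks out exactly the corresponding point of the tower — and then verify that the boundary strata of $\R_{\vec w}(q)$ are exactly the exceptional and boundary divisors.

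The main obstacle is precisely this last identification together with the normal crossings statement: since the ambient space $\overline{M}_{(\vec w,1)}(\bP^2,n+1)$ is expected to satisfy Murphy's law, there is no soft reason for the slice to be smooth, and the only route I see is the stratum-by-stratum analysis above, with the admissible set $\mathcal{D}^{R}_{n}$ calibrated exactly so that the blow-up centers remain smooth and transverse. Once that bookkeeping is carried out, the statement follows.
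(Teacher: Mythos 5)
Your overall strategy coincides with the paper's: realize $\R_{\vec w}(q)$ as an iterated blow-up of $R_{\vec w_0}\cong\mathbb P^{n-3}$ along the smooth loci of destabilized multiple points (a wonderful compactification), build a family of degenerate arrangements over the tower to get a map to the moduli space, and read off the factorization through $\overline M_{0,\vec w}$. But there is a genuine gap at the decisive step, exactly the one you flag as "the identification of the abstract moduli space with the blow-up tower." Matching universal families gives a morphism from the tower to $R_{\vec w}(q)$ (the moduli space is fine), and your valuative-criterion/limit-combinatorics argument shows at best that this morphism is proper, birational and bijective. That only exhibits the smooth tower as the \emph{normalization} of $R_{\vec w}(q)$; it does not exclude that the closure $R_{\vec w}(q)$, taken inside a space expected to satisfy Murphy's law, is non-normal (e.g.\ cuspidal transverse to a boundary stratum), in which case $R_{\vec w}(q)$ would not be smooth even though it is in bijection with the tower. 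Set-theoretic or combinatorial matching of limits sees nothing of this scheme structure. The paper closes precisely this hole: Lemma \ref{fibers} shows, by a deformation-theoretic argument over Artinian bases (triviality of deformations of the gluing $\mathbb P^1$ via the inclusion of normal bundles), that the \emph{scheme-theoretic} fibers of the wall-crossing restricted to $R$ are $\mathbb P^{|I|-3}$, and Theorem \ref{thm:iso} then runs an induction on walls, comparing these fibers with the fibers of the wonderful blow-up and using the normalization exact sequence $0\to\mathcal O_{R_{\vec v}}\to f_*\mathcal O_{Bl_{\vec v}R_{\vec w_0}}\to\delta\to 0$ to force $\delta=0$, i.e.\ normality, before invoking Zariski's main theorem. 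Your proposal contains no substitute for this fiber computation, and without it the smoothness and normal crossings claims do not follow.

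Two smaller points. First, the existence and description of $\pi_2$ is not just "an intermediate stage of Kapranov's tower": the centers over $\mathbb P^{n-3}$ for $R_{1^n}$ are $n$ points, one of them interior, not projectively equivalent to Kapranov's $n-1$ points, so the factorization has to be argued. The paper does it by defining $\pi_2$ geometrically as restriction of the broken lines to the weight-one line $l_1$ (Corollary \ref{genw}), identifying the sub-building set $\{H(I): 1\in I\}$ with Hassett's centers $\mathcal S_{\vec w}$ of Lemma \ref{hassett}, and using the reordering property of wonderful compactifications to split the blow-up as $Bl_{\mathcal H_{\vec w}\setminus\mathcal S_{\vec w}}(\overline M_{0,\vec w})$. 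Second, your plan still needs the verification that the destabilizing loci $H(I)$ (and their intersections) actually form a building set with transverse intersections, and the construction of the family of shas over the tower; these are Lemma \ref{build} and Lemmas \ref{lemma:universal}, \ref{lem:universaltotal} in the paper, and while they match your sketch, they are part of the bookkeeping you would have to carry out.
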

 By results of Kapranov \cite{kap} 
the morphism  $\overline{M}_{0,n}  \to  \mathbb{P}^{n-3} $
factors into the sequence of following morphisms: 
 The blow up of $(n-1)$ points $q_i \in \PP^{n-3}$ which are in general position; the blow up of the strict transforms of the $\PP^1$'s spanned by pairs of the points $q_i$, and so forth.
For  $\vec w=(1, \ldots, 1)$,  the morphism $\pi_2$ factors in a similar fashion. 

\begin{corollary}(see Corollary \ref{genw})
The morphism  $ \R_{1^n}(q) \to  \overline{M}_{0,n}$
factors into the sequence of following morphisms: The blow up of a point $q_n$
 in the interior of  $\overline{M}_{0,n}$; the blow up of the strict transforms of the $\PP^1$s spanned by pairs $\{ q_i, q_n \}$;
the blow up of the strict transforms of the $\PP^2$s spanned by triples $\{ q_i, q_j, q_n \}$,
 and so forth.
\end{corollary}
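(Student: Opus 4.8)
The plan is to reduce the factorization of $\pi_2$ to that of $\pi_1$ by analyzing the composite $\pi_1\circ\pi_2\colon \R_{1^n}(q)\to\mathbb{P}^{n-3}$ as a single iterated blow-up and reorganizing it. When $\vec w=(1,\dots,1)$ we have $\overline{M}_{0,\vec w}=\overline{M}_{0,n}$, and by Kapranov \cite{kap} the morphism $\pi_1$ is the iterated blow-up of $\mathbb{P}^{n-3}$ along the linear spans $\langle q_{i_1},\dots,q_{i_k}\rangle$ of the subsets of size $1\le k\le n-3$ of a configuration $q_1,\dots,q_{n-1}$ of points in general position, performed in order of increasing dimension of the center. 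By Theorem~\ref{main1}, together with the description of $\R_{\vec w}(q)$ as an iterated blow-up of $\overline{M}_{0,\vec w}$ along smooth centers underlying Theorem~\ref{main}, the morphism $\pi_2$, and hence $\pi_1\circ\pi_2$, is also an iterated blow-up along smooth centers; the first task is to name those centers inside $\mathbb{P}^{n-3}$. Since $q\in M_{0,n}$ is generic, $\pi_1$ is a local isomorphism near $q$ and the point $q_n:=\pi_1(q)$ lies in general position with respect to $q_1,\dots,q_{n-1}$; in particular $\pi_1^{-1}(q_n)=\{q\}$.

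The claim to establish is that $\pi_1\circ\pi_2$ is exactly Kapranov's recipe applied to the configuration of $n$ points $q_1,\dots,q_{n-1},q_n$ in general position, i.e.\ the iterated blow-up of $\mathbb{P}^{n-3}$ along the linear spans of all subsets of $\{q_1,\dots,q_n\}$ of dimension at most $n-4$, in order of increasing dimension. Granting this, note that the linear spans of subsets of $n$ points in general position form a building set, so the iterated blow-up is independent of the order of the blow-ups as long as that order refines the containment partial order on centers. Choose the order that first performs all blow-ups along the spans \emph{not} containing $q_n$ (in increasing dimension) and then all blow-ups along the spans \emph{containing} $q_n$ (in increasing dimension). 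The first block is, verbatim, Kapranov's recipe for $\overline{M}_{0,n}$, so it exhibits $\R_{1^n}(q)\to\mathbb{P}^{n-3}$ as factoring through $\overline{M}_{0,n}\xrightarrow{\pi_1}\mathbb{P}^{n-3}$; the second block, viewed as a morphism $\R_{1^n}(q)\to\overline{M}_{0,n}$, is then forced to equal $\pi_2$ (two birational morphisms to $\overline{M}_{0,n}$ agreeing after composition with $\pi_1$ coincide). Pulled back to $\overline{M}_{0,n}$ along $\pi_1$, the center $\{q_n\}$ becomes the interior point $q$, the line $\overline{q_iq_n}$ becomes the strict transform of $\pi_1^{-1}(\overline{q_iq_n})$, the plane $\overline{q_iq_jq_n}$ becomes the strict transform of $\pi_1^{-1}(\overline{q_iq_jq_n})$, and so forth; this is exactly the sequence in the statement. (In low dimension the higher steps are blow-ups along divisors, hence trivial: for example $\R_{1^5}(q)$ is the degree-$4$ del Pezzo surface obtained from $\overline{M}_{0,5}$ by blowing up the single point $q$.)

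The main obstacle is the transversality bookkeeping behind the claim of the previous paragraph: one must verify that the centers furnished by Theorem~\ref{main} are precisely the linear spans of subsets of $\{q_1,\dots,q_n\}$ — no more and no fewer — and that incomparable such spans have transverse strict transforms, so that their blow-ups commute and the reordering above is legitimate (equivalently, that these spans satisfy the building-set axioms after each successive blow-up). This is exactly where the genericity of $q$ is used: $q$ must avoid every boundary stratum of $\overline{M}_{0,n}$, avoid every center not involving $q_n$, and the curves $\pi_1^{-1}(\overline{q_iq_n})$ together with their higher-dimensional analogues must meet the $q_n$-free centers of larger dimension transversally. I would establish these transversalities either through Kapranov's explicit coordinates on $\overline{M}_{0,n}$ combined with the blow-up description of Theorem~\ref{main}, or by bypassing the building-set formalism altogether and checking directly, one dimension of center at a time, that the blow-up along a span through $q_n$ commutes with the blow-up along a $q_n$-free span of strictly larger dimension.
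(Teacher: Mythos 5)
Your outline is correct and, at its core, it is the same argument the paper gives: both start from the identification of $\R_{1^n}(q)\to\bP^{n-3}$ as the iterated (wonderful) blow-up along the linear spans of subsets of $n$ points in general position (Theorem \ref{main}, Lemma \ref{order}), split the centers into those avoiding the distinguished point and those containing it, and use order-independence of the blow-ups to perform the first block first, recognizing the intermediate space as $\overline{M}_{0,n}$. The differences are in execution. The paper proves the more general weighted statement (Corollary \ref{genw}): instead of invoking Kapranov for the first block, it uses Hassett's description (Lemma \ref{hassett}) of $\overline{M}_{0,\vec w}$ as the wonderful compactification of $\bP^{n-3}\cong\overline{M}_{0,\vec\alpha}$ with building set $\mathcal S_{\vec w}$, and matches $\mathcal S_{\vec w}$ with $\{H(I)\mid 1\in I\}$ via $\sum_{i\in I}w_i>2\iff\sum_{i\in I\setminus\{1\}}w_i>1$; your Kapranov route recovers exactly the case $\vec w=1^n$ stated here. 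The ``main obstacle'' you flag -- that the centers are precisely the spans and that the reordering is legitimate -- does not need a hand-made transversality check: it is supplied by Lemma \ref{build} (the $H(I)$ form a building set, with transversality verified there using genericity of $q$) together with \cite[Thm 1.3.ii]{li}. On that point, be slightly careful: the hypothesis of Li's theorem is not that the order merely refines containment, but that every initial segment of the ordered collection is itself a building set; ordering by increasing dimension within each of your two blocks satisfies this and is what the paper uses. Finally, your identification of the second block with $\pi_2$ by uniqueness of birational morphisms after composing with $\pi_1$ is a clean alternative to the paper's route, which defines $\pi_2$ explicitly by restricting the broken lines to the weight-one line $l_1$ (adjunction) and then identifies it with the blow-down given by the reordered wonderful compactification.
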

In contrast to $\overline{M}_{0,n}$, the centers used to construct $\R_{1^n}(q)$ are not projectively equivalent to each other. As a result, $\R_{1^n}(q)$ depends on the choice of $q_n$, and in general different $q_n$ yields non-isomorphic spaces.  Moreover we show the following.
\begin{theorem} \label{exclu}
For a generic choice of $q$ and $n\geq 5$,  there do not exist weights $\vec w$ such that  $\R_{\vec w}(q) \cong \overline M_{0,n}$. 
\end{theorem}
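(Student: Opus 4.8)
The plan is to argue by contradiction: suppose that for some $n \geq 5$ and some weights $\vec w \in \mathcal D^R_n$ there is an isomorphism $\R_{\vec w}(q) \cong \overline M_{0,n}$. The first step is to exploit the factorization from Theorem \ref{main1}, namely the birational morphism $\pi_2 \colon \R_{\vec w}(q) \to \overline M_{0,\vec w}$, together with the reduction morphism $\overline M_{0,\vec w} \to \mathbb P^{n-3}$, and compare it with Kapranov's factorization of $\overline M_{0,n} \to \mathbb P^{n-3}$. Both $\R_{\vec w}(q)$ and $\overline M_{0,n}$ are smooth with normal crossings boundary (the former by Theorem \ref{main1}), so an isomorphism between them must carry boundary to boundary and hence respect the combinatorial structure of the boundary strata. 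I would record the dual complex / incidence data of the two boundary arrangements and observe that on $\overline M_{0,n}$ the boundary divisors and their intersection pattern are highly symmetric — governed by the $S_n$-action and the tree combinatorics of stable curves — whereas, as already emphasized in the paragraph following the Corollary, the blow-up centers producing $\R_{1^n}(q)$ (and, after the weight reduction, those producing $\R_{\vec w}(q)$) are \emph{not} projectively equivalent. The strategy is to turn this asymmetry into a numerical invariant that distinguishes the two spaces.

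Concretely, the second step is to identify a discrete invariant preserved by isomorphism that differs on the two sides. Natural candidates: (i) the number of boundary divisors, or more refined, the number of boundary divisors passing through a generic boundary point of each codimension; (ii) the cone of effective/nef divisors, or the structure of $\mathrm{Pic}$ together with the classes of boundary divisors and their intersection numbers; (iii) the set of $(-1)$-classes or the fibration structure — $\overline M_{0,n}$ admits the forgetful fibrations $\overline M_{0,n} \to \overline M_{0,n-1}$ with one-dimensional fibers, and one can ask whether $\R_{\vec w}(q)$ admits a matching family of morphisms. I expect the cleanest route is to use the contraction $\R_{\vec w}(q) \to \mathbb P^{n-3}$ and the list of its exceptional divisors, matched against Kapranov's list for $\overline M_{0,n}$: in Kapranov's picture the exceptional locus over $\mathbb P^{n-3}$ comes from blowing up a configuration of $n-1$ points in \emph{general position} and their linear spans, so the exceptional divisors fall into $S_{n-1}$-orbits of equal-dimensional, projectively equivalent centers; in the $\R_{\vec w}(q)$ picture (as spelled out in the Corollary for $\vec w = 1^n$ and in general after the Hassett-type weight reduction) one of the points, $q_n$, plays a distinguished role and its span with the others is not equivalent to the spans among the remaining $n-1$ points. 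One then shows that no relabeling can make the two exceptional configurations match, e.g. by counting how many exceptional divisors of a fixed dimension contain a fixed generic point of a given stratum, or by computing a self-intersection or degree of a curve class that is an isomorphism invariant.

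The third step handles the weight dependence: a priori $\vec w$ ranges over the whole admissible set $\mathcal D^R_n$, so I must show the obstruction persists for all such $\vec w$, not just $\vec w = 1^n$. Here I would use that varying $\vec w$ only introduces further contractions — the wall-crossing morphisms $\R_{\vec w}(q) \to \R_{\vec w'}(q)$ for $\vec w \geq \vec w'$ reduce the Picard rank and can only make the space \emph{less} like $\overline M_{0,n}$, while at the maximal weight $\vec w = 1^n$ we already have the asymmetry; alternatively, one checks directly that for every $\vec w \in \mathcal D^R_n$ the image $\overline M_{0,\vec w}$ together with $\pi_2$ retains the distinguished center $q_n$. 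Combining: for any admissible $\vec w$ the invariant computed in Step 2 takes a value on $\R_{\vec w}(q)$ different from its value on $\overline M_{0,n}$, contradicting the hypothesized isomorphism.

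The main obstacle I anticipate is Step 2 — pinning down an invariant that is simultaneously (a) genuinely sensitive to the "projectively non-equivalent centers" phenomenon, (b) computable in closed form for general $n$, and (c) manifestly preserved under \emph{any} isomorphism (not just boundary-preserving ones, though on these spaces every isomorphism is boundary-preserving since the boundary is the non-log-canonical-threshold locus, or can be characterized intrinsically). A secondary subtlety is that for small $n$ (e.g. $n = 5$, where $\overline M_{0,5}$ is a del Pezzo surface of degree $5$) the spaces are low-dimensional and one must make sure the chosen invariant is not accidentally insensitive; I would check the $n = 5$ and $n = 6$ cases by hand to calibrate the invariant before running the general argument. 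I would also double-check the hypothesis $n \geq 5$ is exactly what is needed: for $n = 4$ everything is $\mathbb P^1$ and for $n$ at the lower end of admissibility the weight set $\mathcal D^R_n$ may be small, so the general position statement about $q$ must be invoked to ensure the centers are in the expected configuration.
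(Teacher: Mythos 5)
You have not given a proof; you have given a plan whose decisive step is missing. Everything rests on your Step 2 --- exhibiting an isomorphism invariant that provably differs between $\R_{\vec w}(q)$ and $\overline M_{0,n}$ for \emph{every} admissible $\vec w$ --- and that step is never carried out: you list candidate invariants (number of boundary divisors, Picard/intersection data, fibration structure) but compute none of them and derive no contradiction, and you yourself name this as the main obstacle. The gap is not merely technical. Since $\vec w$ ranges over all of $\mathcal D^R_n$, the building set $\mathcal H_{\vec w}$ can be small, and coarse invariants need not separate the spaces: for $n=5$ one can choose admissible weights (four entries slightly above $w_{0_i}$ and one close to $1$) so that exactly the four points $H(I)$, $|I|=4$, $5\in I$ are destabilized and no triple is; by Theorem \ref{thm:iso}, $\R_{\vec w}$ is then the blow-up of $\mathbb P^2$ at four points which are in general position for generic $q$ (Lemma \ref{order}), i.e.\ a quintic del Pezzo surface --- exactly as $\overline M_{0,5}$ is. So counting boundary divisors or comparing Picard ranks cannot work uniformly, and in such cases it is far from clear that any intrinsic invariant of the abstract variety does the job; your claim that every isomorphism is automatically boundary-preserving is likewise asserted rather than proved. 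Any completion of your strategy must therefore engage with the moduli-theoretic or blow-up structure, which is precisely the information your outline defers.

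The paper's proof takes a different and much more elementary route: it never hunts for an invariant. By Theorem \ref{thm:iso}, $\R_{\vec w}$ is the wonderful compactification of $\R_{\vec w_0}\cong\mathbb P^{n-3}$ with respect to $\mathcal H_{\vec w}$, while by Lemma \ref{hassett} (Hassett/Kapranov) $\overline M_{0,n}$ is the wonderful compactification of $\mathbb P^{n-3}$ with respect to $n-1$ points in general position together with all linear subspaces they span. The proof then shows that no admissible $\vec w$ can produce that building set: matching it would require, for each of the $n-1$ destabilized index sets $I_k$ with $|I_k|=n-1$, that every triple $\{i_1,i_2,i_3\}\subset I_k$ satisfy $w_{i_1}+w_{i_2}+w_{i_3}>2$ (inequality \eqref{eq:1}, coming from destabilizing all the spans), while the remaining point stays stable, i.e.\ $\sum_{i\in I_n}w_i\le 2$ (inequality \eqref{eq:2}). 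Since two $(n-1)$-element subsets of $\{1,\ldots,n\}$ share $n-2\ge 3$ indices when $n\ge 5$, some triple lies inside $I_n$, and the two inequalities contradict each other. In other words, the paper compares the required blow-up data directly and settles the question with a two-line arithmetic of weights; what your outline would additionally need --- a proof that an abstract isomorphism forces the blow-up data to agree --- is exactly the delicate point highlighted by the $n=5$ example above, and it is not supplied in your proposal.
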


The objects parametrized by  $\overline{M}_{\vec \beta}(\PP^2,n+1)$ are called stable hyperplane arrangements, or \emph{shas} (see \cite[Def 5.3.1]{alexeev2013moduli}), and they are stable pairs in the sense of the Minimal Model Program (see \cite[Thm 5.3.2]{alexeev2013moduli}). The  shas parametrized by $\R_{\vec w}(q)$ are described in Section \ref{setUp}. In particular, our setting restricts the possible singularities that appear in our shas (see Remark \ref{rmk:singularities} and Proposition \ref{sing}) (see Figure \ref{fig:fig1}).
\begin{figure}[!htb]\label{fig:fig1}
\minipage{0.23\textwidth}
  \includegraphics[width=\linewidth]{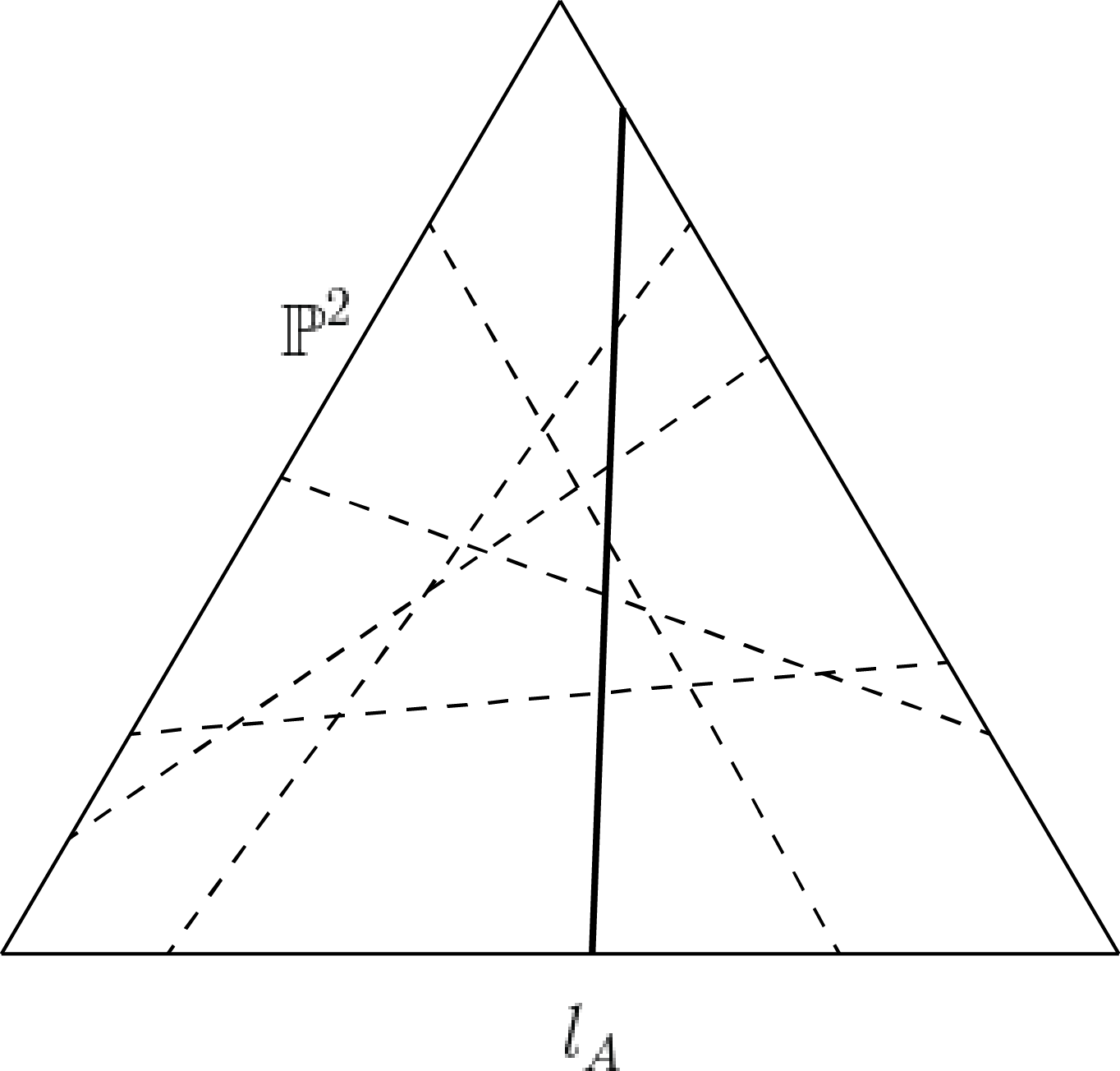}
\endminipage\hfill
\minipage{0.23\textwidth}
  \includegraphics[width=\linewidth]{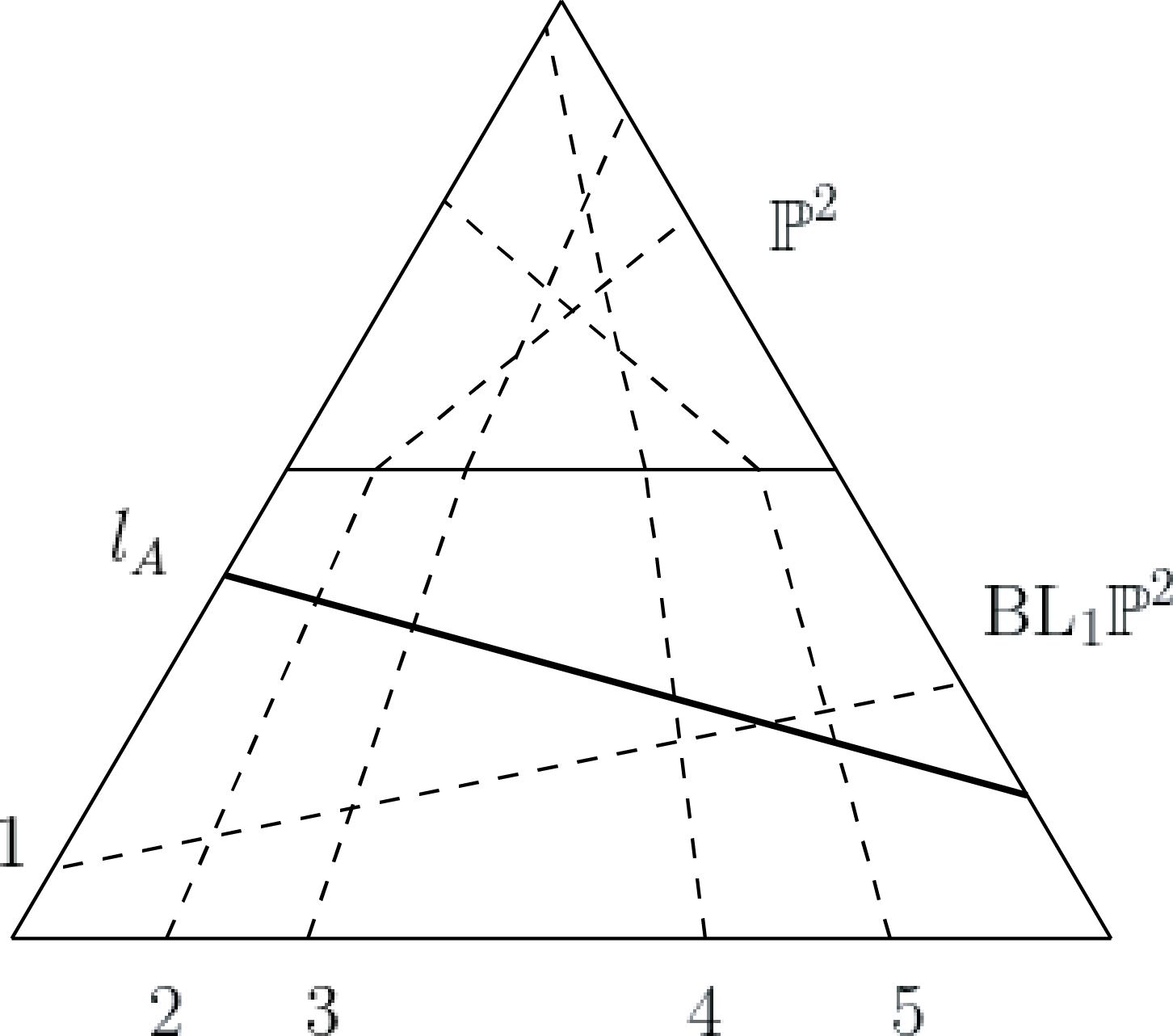}
\endminipage\hfill
\minipage{0.23\textwidth}%
  \includegraphics[width=\linewidth]{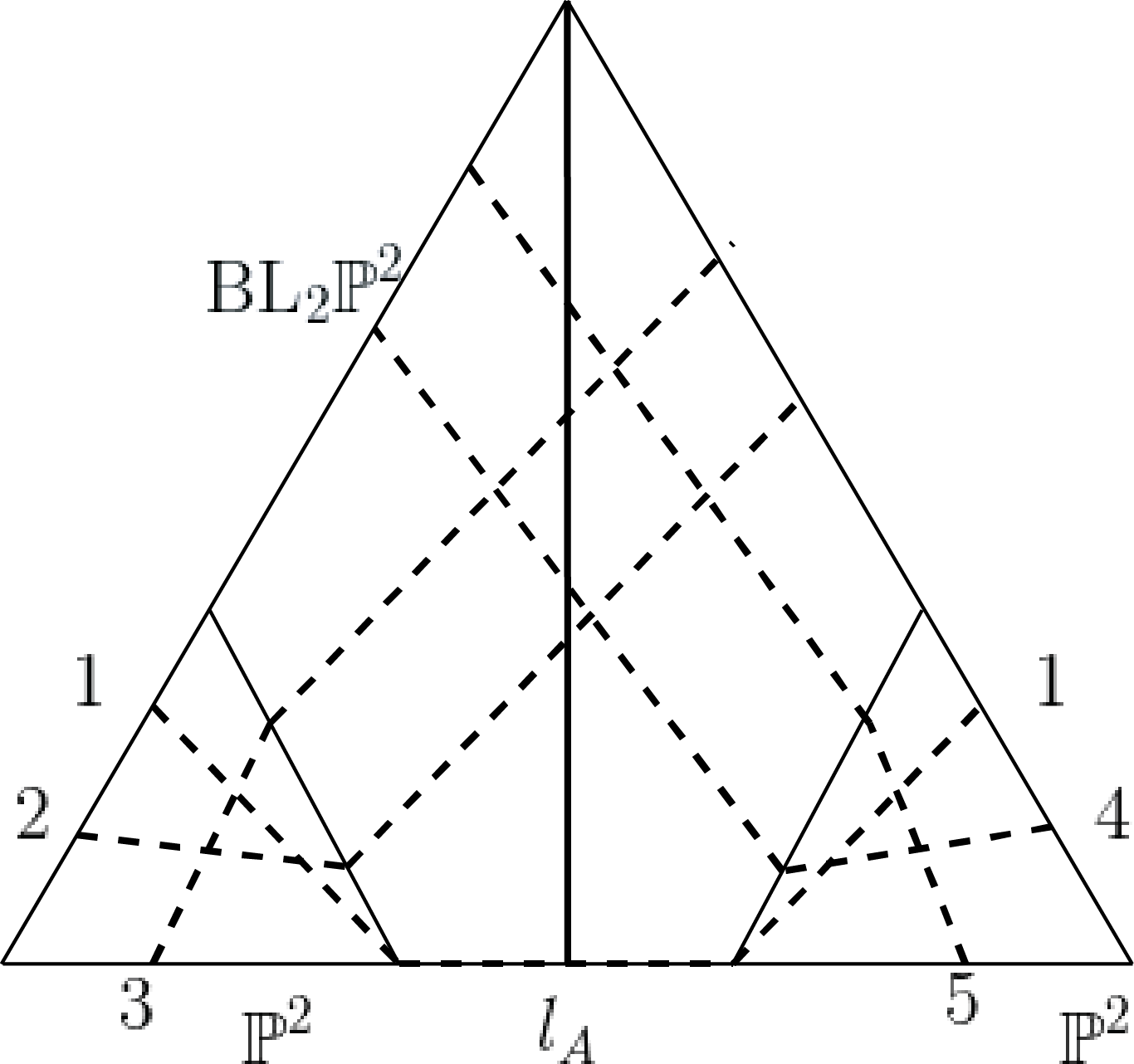}
\endminipage
\caption{Examples of generic and non-generic  shas parametrized by $\R_{1^5}(q)$}\label{examples}
\end{figure}

Our next main result is that the locus $\R_{1^n}(q)$ is the normalization of a non-reductive Chow quotient.  In particular, our result fits into a library of examples (see \cite{pn},  \cite{thaddeus1999complete}, \cite{hu2005topological},  
\cite{noahrnc} and \cite{kapranov1991quotients}) where Chow quotients are used to study the geometry of moduli spaces.   The following outline generalizes the construction of Kapranov \cite{chow} in the setting of $\R_{1^n}(q)$  (see Remark \ref{rmk:kapranov}):
Given the collection of  $n$ points $p_i$  in the dual projective space $ \hat \bP^2$ such that the point $p_i$ is dual to the line $l_i$, we consider the locus, in an appropriate Chow variety, that parametrizes  the cycles associated to the orbits $\overline{ G \cdot (p_1, \ldots, p_n)}$ where $G \subset SL(3,\mathbb{C})$ is the group that \emph{fixes the intersection} of the associated lines $l_i$ with $l_A$. By normalizing the closure of this locus in the Chow variety, we recover $R_{1^n}(q)$   (see Section \ref{sec:chow}).

\begin{theorem}[see Theorem \ref{chowt}] \label{ChowThm}
For a generic choice of $q$,  the space $\R_{1^n}(q)$ is isomorphic to the normalization of a closed subvariety of the Chow quotient $(\hat \bP^2)^n \ChowQ G$ where $G \subset \SL(3,\bC)$ is the group 
fixing the line $l_A$ pointwise.
 \end{theorem}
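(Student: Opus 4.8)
The plan is to compare two birational models over the common base $\bP^{n-3}$: the moduli-theoretic space $\R_{1^n}(q)$ and the Chow-theoretic space obtained by closing up the locus of $G$-orbit cycles in an appropriate Chow variety of $(\hat\bP^2)^n$. First I would set up the Chow construction precisely, following Kapranov's visible contours / Chow quotient philosophy \cite{chow}: for a generic $n$-tuple $(p_1,\dots,p_n)$ of points in $\hat\bP^2$, the orbit closure $\overline{G\cdot(p_1,\dots,p_n)}\subset(\hat\bP^2)^n$ is an irreducible variety of dimension $\dim G = n-3$ (here $G$ is the $(n-3)$-dimensional group fixing $l_A$ pointwise, which on the dual side is the group fixing the dual point $\hat l_A$ and acting trivially on the pencil of lines through it). Its fundamental cycle defines a point in $\Chow_{n-3,d}((\hat\bP^2)^n)$ for the appropriate multidegree $d$; letting the $p_i$ vary over the locus with prescribed intersections $q$ with $l_A$ gives a rational map, and I take $Z$ to be the closure of its image. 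The goal is to show $\R_{1^n}(q)$ is the normalization of $Z$.

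The key steps, in order, are: (1) Identify the open dense locus. On the locus of genuine line arrangements with intersection $q$, both $\R_{1^n}(q)$ and $Z$ restrict to the same quotient — a torsor-like locus isomorphic to $G\gquot U$ for the appropriate open $U$ — and hence are canonically isomorphic there; this uses that $G$ acts freely on the generic stratum and that the orbit cycle recovers the orbit. (2) Construct the morphism. I would produce a morphism $\R_{1^n}(q)\to Z$ extending this open isomorphism: the universal sha over $\R_{1^n}(q)$ determines, fiberwise, a well-defined cycle in $(\hat\bP^2)^n$ — on a degenerate sha the $n$ lines together with the limiting arrangement data dualize to a limit cycle, and by the valuative criterion / properness of the Chow variety these limits agree with the Chow-limit of orbit cycles. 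Properness of $\R_{1^n}(q)$ (Theorem \ref{main1}, or rather its construction as an iterated blow-up) makes this morphism proper and birational onto $Z$. (3) Normalization. Since $\R_{1^n}(q)$ is smooth by Theorem \ref{main1}, hence normal, and the morphism $\R_{1^n}(q)\to Z$ is proper birational, it factors through the normalization $\widetilde Z\to Z$; to conclude $\R_{1^n}(q)\cong\widetilde Z$ I must show this factor map $\R_{1^n}(q)\to\widetilde Z$ is an isomorphism, equivalently that it is injective on points (it is already proper, birational, and has normal target of the same dimension, so Zariski's main theorem applies once injectivity is known).

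The main obstacle will be step (2)–(3): controlling the Chow limit of the orbit cycles along each boundary divisor and matching it with the sha-theoretic degeneration, and in particular proving injectivity of $\R_{1^n}(q)\to\widetilde Z$. Concretely, one must check that two non-isomorphic boundary shas in $\R_{1^n}(q)$ never produce the same limit cycle in $(\hat\bP^2)^n$ — i.e. that the dual cycle remembers enough of the combinatorial type and gluing parameters of the sha. I would handle this stratum by stratum along the iterated blow-up description of Corollary \ref{genw}: each blow-up center corresponds to a linear subspace spanned by a subset of the $p_i$ together with the distinguished point, and the exceptional divisor's generic cycle is the orbit closure of a "broken" configuration where that subset has collided on $l_A$; tracking which component of the limit cycle carries which labels recovers the combinatorial type, and the residual moduli along the component recover the remaining parameters. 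A secondary technical point is pinning down the correct multidegree and the precise Chow variety (a product of Grassmannian-type parameter spaces, since $(\hat\bP^2)^n$ is a product), and verifying the GIT-free Chow quotient $(\hat\bP^2)^n\ChowQ G$ makes sense for the non-reductive $G$ in the sense of Kapranov — this is where the phrase "non-reductive Chow quotient" must be made rigorous, presumably by simply defining it as the closure $Z$ above, so that the content of the theorem is precisely the identification of its normalization with the already-understood space $\R_{1^n}(q)$.
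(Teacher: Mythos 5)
Your overall skeleton (identify the open loci, extend to a morphism into the Chow quotient, then conclude by Zariski's main theorem once injectivity on the boundary is known) is the same as the paper's, but the step you yourself flag as "the main obstacle" is precisely where the paper's real content lies, and your sketch of it contains a misconception that would derail the argument. The missing idea is the explicit definition of the cycle attached to a \emph{degenerate} sha $X=\cup_v X_v$: the paper sets $Z(X):=\sum_{v}\overline{G\cdot \varphi_v(X)}$, a \emph{sum} of orbit closures, one for each component, where $\varphi_v:X\to\bP^2$ is the contraction onto the $v$-th component; and it proves $\lim_{t\to 0}Z(X_t)=Z(X_{\bC})$ not by a stratum-by-stratum blow-up analysis but by a homology-class bookkeeping argument (Propositions \ref{prop:homclass} and \ref{prop:maxdegenclass}): the class of a generic orbit closure has coefficients $c_{\vec m}\in\{0,1\}$, equal to $1$ exactly when no $m_i=2$, and the same holds for $Z(X)$ when $X$ is maximally degenerate (this uses the rooted-tree structure of the dual graph from Proposition \ref{sing} to find, for each $\vec m$, a unique component meeting the generic linear spaces); containment $Z(X_{\bC})\subseteq\lim Z(X_t)$ (via lifting the contractions $\varphi_v$ to the one-parameter family) plus the squeeze $1\le c^{0}_{\vec m}\le c^{gen}_{\vec m}\le 1$ then forces equality, which is what makes the extension criterion (the DVR lemma of Giansiracusa--Gillam) applicable. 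Without some replacement for this computation, your appeal to "the valuative criterion / properness of the Chow variety" only gives existence of \emph{a} limit cycle, not that it is determined by the boundary sha, which is exactly what must be proved.

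Two concrete errors in your setup. First, $G\cong\bG_m\ltimes\bG_a^2$ has dimension $3$, not $n-3$; the orbit cycles are $3$-folds for every $n$ (it is the quotient $\overline U\ChowQ G$ that has dimension $n-3$), so the Chow variety you work in is one of $3$-cycles of fixed class. Second, your description of the boundary degenerations as configurations "where that subset has collided on $l_A$" cannot occur in $\R_{1^n}(q)$: the intersections $l_i\cap l_A$ are frozen at the generic point $q$ throughout, and the blow-up centers $H(I)$ parametrize subsets of lines becoming \emph{concurrent at a point away from} $l_A$, with stable replacement attaching a new $\bP^2$; correspondingly the limit cycle is reducible, with one orbit-closure summand per component of the sha, rather than the orbit closure of a single broken configuration. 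Your injectivity idea (the cycle remembers the combinatorial type and the moduli of each component) is essentially the paper's, and works once the cycle is defined as above, since $G\subset\SL(3,\bC)$ acting on each $\varphi_v(X)$ recovers the component up to isomorphism; but it presupposes the correct description of $Z(X)$, which is the point your proposal leaves open.
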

 
\subsection{Method of proof of Theorem \ref{main1}}
We give an outline of our proof that $\R_{\vec w}(q)$ is smooth with normal crossings boundary. The overall strategy is to prove that $\R_{\vec w}(q)$ is isomorphic to a wonderful compactification, which is smooth with normal crossings boundary by definition (see Theorem \ref{thmLi}).\\

We first construct our space with smallest admissible weights $\vec w_0$, show that $R_{\vec w_0} \cong \bP^{n-3}$ (see Lemma \ref{base}), and construct a family over $R_{\vec w_0}$ (see Lemma  \ref{lemma:universal}). In Section \ref{sec:wonderful} we construct the wonderful compactification $Bl_{\vec w} R_{\vec w_0}$, and in Lemma \ref{lem:universaltotal} we construct a family of shas over the wonderful compactification. Using this family, we obtain a finite birational (i.e. normalization) morphism from the wonderful compactification to our space: $Bl_{\vec w} R_{\vec w_0} \to R_{\vec w}$. We prove normality of $R_{\vec w}$ in Theorem \ref{thm:iso}, which implies that $R_{\vec w} \cong Bl_{\vec w} R_{\vec w_0}$ by Zariski's main theorem. Finally, we note that the key lemma required to prove normality of $R_{\vec w}$ is Lemma \ref{fibers}.

\section*{Acknowledgements} We would like to thank Dan Abramovich, Valery Alexeev, Dori Bejleri,  Noah 
Giansiracusa, Paul Hacking, Brendan Hassett, Sean Keel, Steffen Marcus, and Dhruv Ranganathan  for insightful discussions.  We thank the referees for their suggestions which greatly helped improve our work. K.A. especially thanks Steffen Marcus for help understanding deformation theory leading to the proof of Lemma \ref{fibers}. Research of P. G. is supported in part by funds from   NSF grant DMS-1344994 of the RTG in Algebra, Algebraic Geometry, and Number Theory, at the University of Georgia. K. A. is supported in part by funds from NSF grant DMS-1500525 grant, NSF grant DMS-1162367, and an NSF postdoctoral fellowship.
%%%%%%%%%%%%%
%%%%%%%%%%%%%%%%
 \section{Definition and basic properties}\label{setUp}  
We work only over $\mathbb C$ for convenience. We begin with the necessary background on the moduli space $\overline{M}_{\vec w}(\bP^2,n+1)$, see \cite{hkt} and \cite{alexeev2013moduli} for a full exposition. 

Configurations of $(n+1)$ labeled lines $(l_1, ..., l_{n+1})$ in $\bP^2$ up to projective equivalence are parametrized by the open moduli space $M(\bP^2,n+1)$, which has a family of geometric compactifications $\overline{M}_{\vec  \beta}(\bP^2, n+1)$ depending on a weight vector 
$\vec \beta :=(\beta_1, \ldots, \beta_{n+1})$ 
(see \cite[Theorem 5.4.2]{alexeev2013moduli}). \\

The weight domain of possible weights $\vec \beta$ is

\begin{align}\label{eq:wh}
\mathcal D(3,n+1)
=\left\{
\vec \beta \in \mathbb Q^{n+1} \; 
\bigg\vert
\; \sum_{i=1}^{n+1} \beta_i > 3, \; 0 < \beta_i \leq 1
\right\}
\end{align}
In general these compactifications are \emph{not} irreducible. However, they do contain a main irreducible component parameterizing  stable pairs in the sense of MMP
 $
\left(
X, \sum_{k=1}^{n+1} \beta_{k}l_k
\right)
$
appearing as degenerations of  the $(n+1)$ lines
in $ \bP^2$. 

\begin{definition}\label{def:sha}
The stable pairs 
$
(X, D):=
\left(
X, \sum_{i=1}^{n+1} \beta_{k} l_k
\right)$  parametrized by $\overline{M}_{\vec \beta}  (\PP^{2},n+1)$ are called
\textbf{shas} of weight $\vec \beta$ or just shas if the weight $\vec \beta$ is clear from the context. 
\end{definition}

\begin{notation}\label{notate}
Let $I \subset \{1, 2, ..., n\}$ be an index set.  A sha $(X,D)$ has a multiple point $p(I)$ if there exists a component $X_i$ of $X$ and divisors  
$\{l_i = D|_{X_i} \mid  i \in I \}$ 
such that the divisors $l_i$ are concurrent at  a point $p(I) \in X_i$.
 \end{notation}

\begin{remark}\label{rmk:weights}The admissible singularities of the divisors $D$ in the sha $(X,D)$ depend completely on the weights $\vec  \beta$.  Indeed, we cannot have coincident lines $\{l_i \; | \; i \in I \}$ with weight 
$\sum_{i \in I} \beta_i>1$ or multiple points $p(I)$ defined by the concurrent lines 
$\{ l_i \; | \; i \in I \}$
with total weight $\sum_{i \in I} \beta_i >2$. \end{remark}

\begin{definition}\label{def:ParOrdW}
Let $\vec \beta$ and $\vec \alpha$ be two weights vector in $\mathcal D(3,n+1)$.
We say that $\vec \beta \geq \vec \alpha$  if $\beta_i \geq \alpha_i$ for all $i$.
\end{definition}

 As in the Hassett spaces 
$\overline M_{0, \vec w}$, the shas parametrized by $ \overline{M}_{(\vec w,1)}(\mathbb{P}^2,n+1)$ depend solely on the weights $\vec w$, and the weight domain admits a wall and chamber decomposition.
\begin{theorem}
(see \cite[Thm 5.5.2]{alexeev2013moduli})
\label{thm:wallchamber}
The domain $\mathcal D(3,n+1)$ is divided 
into finitely many walls and chambers. There are two types of walls:
\begin{align}\label{wall2}
W(I):=\left( \sum_{i \in I}\beta_i -2  =0 \right), 
& &
\widetilde{W}(I):=\left( \sum_{i \in I}\beta_i -1  =0 \right). \qquad{}
\end{align}
for all  $I \subset \{1, \ldots, n+1\}$, $2 \leq |I| \leq (n-1)$. 
Moreover,
\begin{enumerate}
\item 
if $\vec \beta$ and $\vec \alpha$ lie in the same chamber, then the weighted moduli spaces and their  families of shas are the same. 
\item 
If
 $\vec \beta$ is in the closure of the chamber containing $\vec \alpha$, then there exists a contraction 
$$
\overline{M}_{\vec \alpha}  (\PP^{2},n+1)
\to 
\overline{M}_{\vec \beta}  (\PP^{2},n+1)
$$
\item 
Further, if $\vec \beta$ is in the closure of the chamber containing $\vec \alpha$ and
$\alpha \leq \vec \beta$ then 
$$
\overline{M}_{\vec \alpha}  (\PP^{2},n+1)
= 
\overline{M}_{\vec \beta}  (\PP^{2},n+1).
$$
\end{enumerate}
\end{theorem}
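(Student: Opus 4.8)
The plan is to follow the \emph{variation of stability} paradigm familiar from Hassett's weighted pointed curves, transported to the log canonical stable pairs of Definition \ref{def:sha}. The starting point is the observation recorded in Remark \ref{rmk:weights}: the admissible degenerations of a sha $(X,D)$ are controlled entirely by two families of inequalities, namely a set of coincident lines $\{l_i \mid i \in I\}$ is allowed precisely when $\sum_{i\in I}\beta_i \le 1$, and a multiple point $p(I)$ cut out by concurrent lines is allowed precisely when $\sum_{i\in I}\beta_i \le 2$. First I would record that the loci where these closed conditions degenerate to equalities are exactly the hyperplanes $\widetilde W(I)$ and $W(I)$, and that the stated range $2 \le |I| \le n-1$ is forced: one needs $|I|\ge 2$ to even speak of a coincidence or a concurrency, while $|I|=n$ would require the complementary line to carry weight strictly bigger than $1$ (to keep $\sum\beta_i>3$ while $\sum_{i\in I}\beta_i\in\{1,2\}$), and $|I|=n+1$ would force the total weight to equal $1$ or $2$, in either case contradicting $\vec\beta \in \mathcal D(3,n+1)$. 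Hence no such wall meets the domain, and finiteness of the decomposition is immediate from the finiteness of the admissible index sets $I$.

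For part (1), I would argue that within a single open chamber none of the strict inequalities $\sum_{i\in I}\beta_i < 1$, $\sum_{i\in I}\beta_i < 2$ (or their reverses) changes truth value. Consequently the class of log canonical pairs whose coincidences and multiple points satisfy the admissibility constraints is literally unchanged, and so is the requirement that $K_X + D$ be ample. It follows that the moduli functor, and therefore the coarse space together with its universal family of shas, depends only on the chamber and not on the chosen point $\vec\beta$ inside it.

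For parts (2) and (3) I would build the comparison as a reduction morphism. Given an $\vec\alpha$-stable pair $(X,D_{\vec\alpha})$, I would reweight and pass to the log canonical model of $(X,\sum_k \beta_k l_k)$, that is $\Proj$ of its log canonical ring, producing a $\vec\beta$-stable model. When $\vec\beta$ lies in the closure of the chamber of $\vec\alpha$ this construction carries through in families, and the valuative criterion together with the universal property of $\overline{M}_{\vec\beta}(\PP^2,n+1)$ upgrades it to a birational morphism $\overline{M}_{\vec\alpha}(\PP^2,n+1) \to \overline{M}_{\vec\beta}(\PP^2,n+1)$, surjective with connected fibers, hence a contraction in the sense of \cite{alexeev2013moduli}. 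For the equality in part (3) I would exploit that log canonicity is a closed condition, so each wall is of the closed ``$\le$'' type and possesses a distinguished \emph{allowed} side on which the admissible objects coincide with those on the wall itself; for a concurrency wall this is the side $\sum_{i\in I}\beta_i \le 2$, and similarly $\sum_{i\in I}\beta_i\le 1$ for a coincidence wall. The hypothesis $\vec\alpha \le \vec\beta$ guarantees that the chamber of $\vec\alpha$ abuts each bounding wall exactly from this allowed side, for instance $\sum_{i\in I}\alpha_i < 2 = \sum_{i\in I}\beta_i$, so that no shas are created or destroyed in passing to the wall and the reduction morphism is an isomorphism.

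The step I expect to be the main obstacle is the construction of the contraction in part (2): one must show that the pointwise rule ``replace an $\vec\alpha$-stable pair by its $\vec\beta$-stable log canonical model'' is not merely a set-theoretic map but an actual morphism of coarse moduli spaces. This requires controlling the behaviour of the relative log canonical model in one-parameter (and higher-dimensional) families as the weight vector degenerates onto the wall, so that the valuative criterion of properness can be applied and the induced map is a genuine morphism rather than only a rational one. The wall structure isolated in the first paragraph is precisely what organizes this analysis, since it pins down which collapses can occur along each boundary stratum.
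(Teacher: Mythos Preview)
The paper does not give its own proof of this theorem: it is quoted verbatim as \cite[Thm~5.5.2]{alexeev2013moduli} and used as a black box, with no argument supplied. So there is nothing in the paper to compare your proposal against.

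That said, your outline is a faithful reconstruction of the Hassett--Alexeev variation-of-stability argument and would serve as a reasonable sketch of Alexeev's proof. The numerical check that walls with $|I|\ge n$ miss $\mathcal D(3,n+1)$ is correct, and your identification of the main technical burden---lifting the pointwise log canonical model replacement to a morphism of moduli spaces over families---is exactly where the real work lies. If you were writing this up in full you would need to invoke the relative MMP machinery (finite generation of the log canonical ring in families, semicontinuity of the stability condition) rather than just the valuative criterion; the latter gives properness but not by itself the existence of the morphism. But as a roadmap your proposal is sound, and since the paper itself simply cites the result, there is no discrepancy to flag.
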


\begin{remark} Recall from Remark \ref{rmk:weights} that there are two types of singularities appearing in shas. In this setting, the walls $W(I)$ correspond
 to multiple points $p(I)$, and the walls $\widetilde{W}(I)$ correspond to coincident lines. \end{remark}

\section{Definition of $\R_{\vec w}(q)$}\label{sec:definition}
To construct $\R_{\vec w}(q)$, we consider arrangements of $n+1$ labeled lines in $\bP^2$, and we label the $(n+1)^{\textrm{st}}$-line as $l_A$ to distinguish it. 
We will always assume $l_A$ has weight 1, and thus will denote our weight set 
$\beta \in \mathcal D(3,n+1)$ as $(\vec w, 1)$.  In this section, there is no need to restrict the set of weights $ \vec w$. However in the following sections, we will consider an additional restriction on the weights (see Definition \ref{def:adw}).

We have a naturally induced \emph{restriction} morphism  
$$ \varphi_A: M_{(\vec w,1)}(\bP^2, n+1)  \to  M_{0, \vec w},$$
induced by considering the intersection of $l_A$  with the lines
$l_i$ where $i \in \{ 1, \ldots, n \}$.  Next, we take the fiber of this restriction over a generic point $q \in M_{0, \vec w}$, and then take closure of this fiber in the compact moduli space of weighted hyperplane arrangements. 
\begin{definition}\label{def:Rn}
Let 
$q \in  {M}_{0,\vec w} \subset \overline M_{0, \vec w}$
 be a generic point. We define $\R_{\vec w}(q)$ as the closure in $\overline{M}_{\vec w, 1}(\bP^2, n+1)$ of the fiber product of the following diagram:
$$
\xymatrix{
R_{\vec w}(q) \ar[r] \ar[d] & \overline M_{(\vec w,1)}(\mathbb{P}^2,n+1)  \ar[d]^{\varphi_A}
\\
q  \ar[r] & \overline M_{0,\vec w}
}
$$
\end{definition}
\begin{remark}\label{rmk:BH}
B. Hassett gave an example of families 
$\left( \mathcal X,\frac{1}{2} \mathcal D \right) \to  
\operatorname{Spec} \left( \mathbb C [[ t]] \right)$ where  $\mathcal D |_{t=0}$ has embedded points. In general for pairs, the  components of the boundary with fractional coefficients $\leq \frac{1}{2}$ need not be Cohen-Macaulay.  By 
\cite[Lemma 1.5.1]{alexeev2013moduli}, the mentioned difficulty will \emph{not} occur for very generic coefficients of the form $\vec w$ for which one entry satisfies $w_i=1$.
\end{remark}

\begin{lemma}\label{dim}
The dimension $\dim(\R_{\vec w}(q) ) =  n-3$.
\end{lemma}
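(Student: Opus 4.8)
The plan is to compute the dimension of $\R_{\vec w}(q)$ by realizing it as a fiber of the restriction morphism $\varphi_A$ over a generic point, and separately computing $\dim M_{(\vec w,1)}(\bP^2,n+1)$ and $\dim M_{0,\vec w}$. Since $\R_{\vec w}(q)$ is by definition the closure of the fiber $\varphi_A^{-1}(q)$ over a generic $q \in M_{0,\vec w}$, it suffices to show that the generic fiber of $\varphi_A$ has dimension $n-3$; this follows from the fibre-dimension theorem applied to a dominant morphism between the relevant open moduli spaces, once one checks $\varphi_A$ is dominant (surjective onto $M_{0,\vec w}$, since any $n$ distinct points on $\bP^1$ arise from some line arrangement) and that both source and target are irreducible of the stated dimensions.

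First I would compute $\dim M_{(\vec w,1)}(\bP^2,n+1)$. The space of $(n+1)$ labeled lines in $\bP^2$ in general position, modulo projective equivalence, has dimension $2(n+1) - \dim \PGL(3) = 2(n+1) - 8 = 2n - 6$, since the dual of a line is a point in $\hat\bP^2$ and $\PGL(3)$ acts with finite stabilizer on generic configurations of at least four points. Next, $\dim M_{0,\vec w} = n - 3$, the usual dimension of the moduli of $n$ points on $\bP^1$ (generically $\vec w$ imposes no change on the open locus). Then the generic fiber of $\varphi_A$ has dimension $(2n-6) - (n-3) = n - 3$, which is the claimed value. Since $\R_{\vec w}(q)$ is the closure of one such generic fiber, $\dim \R_{\vec w}(q) = n-3$.

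The one point requiring care is that $\varphi_A$ is genuinely dominant with irreducible generic fiber, so that the naive subtraction of dimensions is valid. For dominance: given any configuration of $n$ distinct points $(q_1,\dots,q_n)$ on a fixed line $l_A \cong \bP^1$, one can choose $n$ further lines $l_1,\dots,l_n$ through those points (in general position otherwise), producing a point of $M_{(\vec w,1)}(\bP^2,n+1)$ mapping to the given configuration; hence the image contains a dense open subset of $M_{0,\vec w}$. For the fiber: fixing $l_A$ and the $n$ intersection points pins down, for each $i$, a pencil of lines $l_i$ through $q_i$ — a $\bP^1$'s worth of choices — so before quotienting one has an $(\bP^1)^n$-family; the residual group acting is the stabilizer of $l_A$ together with the $n$ marked points on it inside $\PGL(3)$, which is the $3$-dimensional group $G$ fixing $l_A$ pointwise (as in Theorem~\ref{ChowThm}). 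This gives fiber dimension $n - 3$, matching the count above, and irreducibility is clear from the rational parametrization by $(\bP^1)^n$.

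The main obstacle is simply bookkeeping the group actions and stabilizers correctly — verifying that generic configurations have finite (indeed trivial) $\PGL(3)$-stabilizer so that $\dim M_{(\vec w,1)}(\bP^2,n+1) = 2n-6$, and that the fiberwise residual symmetry is exactly $3$-dimensional — rather than any deep geometric input; everything else is a routine application of the theorem on dimension of fibers of a dominant morphism of irreducible varieties, together with the fact that passing to the closure inside the compact moduli space does not change the dimension.
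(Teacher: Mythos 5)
Your proposal is correct and follows essentially the same route as the paper: the paper also computes $\dim \R_{\vec w}(q)$ as $\dim M_{(\vec w,1)}(\bP^2,n+1) - \dim M_{0,\vec w} = 2(n-3)-(n-3)$, citing Alexeev for the first dimension rather than recounting lines modulo $\PGL(3)$. Your added checks (dominance of $\varphi_A$, the pencil parametrization of the fiber modulo the $3$-dimensional group fixing $l_A$ pointwise) are sound and simply make explicit what the paper leaves implicit in the fiber-product dimension count.
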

\begin{proof} 
By the fiber product  construction we see that 
$$
 \dim \left( \R_{\vec w}(q) \right)= 
\dim\left(  \overline{M}_{(\vec w,1)}(\PP^2,n+1) \right)
-
\dim( \overline{M}_{0,n} )
$$
The result  follows since  $ \dim \left( \overline{M}_{w}(\PP^2,n+1) \right) = 2(n-3)$
(see \cite[pg 84]{alexeev2013moduli}).  
\end{proof}

\begin{remark}\label{rmk:singularities} 
We will show in Proposition \ref{sing} that
\begin{enumerate}
\item the only singularities 
in the shas parametrized by $R_{\vec w}(q)$
are multiple points (no overlapping lines), as each line $l_i$ with $1 \leq i \leq n$ intersects the fixed line $l_A$ in a \emph{distinct} point.
\item The dual graph of $X$ is a rooted tree (see Proposition \ref{sing} [II]). This allows us to fully describe the shas parametrized by $\R_{1^n}(q)$ (see Figure \ref{fig:fig1}). 
\item Each \emph{broken line}  $l_i$ can be seen as a chain of lines that starts in the rooted component. The $l_i$ may have several branches, and can be contained in several components.
\end{enumerate}
\end{remark}

\begin{definition}\label{def:stabilize}
We say that the weight $\vec \beta$ \textbf{destabilizes} the multiple point $p(K)$  if the sum
$\displaystyle\sum_{k\in K} \beta_i > 2$.
We also say $\vec \beta$ destabilizes the sha $(X,D)$ if the pair has a singularity 
destabilized by $\vec \beta$.
\end{definition}

In what follows, we discuss the stable replacement of shas with multiple points which will be relevant for us
(see \cite[Chapter 5]{alexeev2013moduli} for a complete discussion). 
\subsection{Stable replacement}\label{stablerp}
Let $I \subset \{1, 2, ..., n\}$ be an index set. We
consider two chambers in $\mathcal D(3,n+1)$ separated by the wall
$W(I)$ as defined in Theorem \ref{thm:wallchamber}.  
Let $\vec w \leq \vec v$ be weights in those chambers 
such that $\sum_{i \in I} w_i <2$ and $\sum_{i \in I} v_i > 2$. 
Let $\vec u$ be a weight in the wall that separates those chambers, so in particular $\sum_{i \in I} u_i=2$.

Let  $(X,D)$ be a sha 
parametrized by $\overline M_{(\vec w, 1)}(\PP^2,n+1)$, and suppose that the sha 
has only a multiple point $p(I)$; notice that the point $p(I)$  will never be supported on $l_A$ (Remark \ref{rmk:singularities} (1)).
By (3) in Theorem \ref{thm:wallchamber}, changing the weights from
$\vec w$ to $\vec u$ will not modify the moduli spaces, so 
$$
\overline M_{(\vec w, 1)}(\PP^2,n+1) \cong \overline M_{(\vec u, 1)}(\PP^2,n+1).
$$
The singularity $p(I)$ is still  log canonical with respect to the weights
$(\vec u,1)$. Therefore, $(X,D)$ is in the universal family associated to weights
$\vec u$.

Next, we change the weights from $\vec u$ to $\vec v$.  By (2) in Theorem 
\ref{thm:wallchamber}, there is a contraction 
$$
\pi_{\vec v,\vec u}: 
\overline M_{(\vec v, 1)}(\PP^2,n+1)
\to
\overline M_{(\vec u, 1)}(\PP^2,n+1)
$$
By moduli theory, we know that the center of this morphism is the locus parametrizing shas with singularities that are destabilized respect to the new weights $(\vec v,1)$.  In particular, 
the  sha $(X,D)$ is no longer 
parametrized by $\overline M_{(\vec v, 1)}(\PP^2,n+1)$ because
$\sum_{i \in I} v_i > 2$.

Let $z \in \overline M_{(\vec u, 1)}(\PP^2,n+1)$ be the  point parametrizing the sha $(X,D)$. 
Next, we describe the sha $(\tilde{X}, \tilde{D})$ parametrized by a generic point in $\pi^{-1}_{\vec v,\vec u}(z)$.   We first blow up $X$ at $p(I)$, and we attach a $\bP^2$ along the exceptional divisor $E_{p(I)}$ to obtain a new  surface
$$
\tilde{X} = Bl_{p(I)}X \cup_{E_{p(I)}} \mathbb{P}^2
$$
with the lines $(l_i$ , $i \in I)$ crossing into the new $ \mathbb{P}^2$
and  defining a new divisor $\tilde{D}$
(see Figure \ref{examples2}).  The multiple lines defining $p(I)$ are  separated in $Bl_{p(I)}X$, and they are generically separated in the new component $\mathbb{P}^2$.  They may acquire a multiple point, but they cannot overlap with each other, because they are already separated in the double locus.

\begin{figure}[!htb]
\minipage{0.25\textwidth}
  \includegraphics[width=\linewidth]{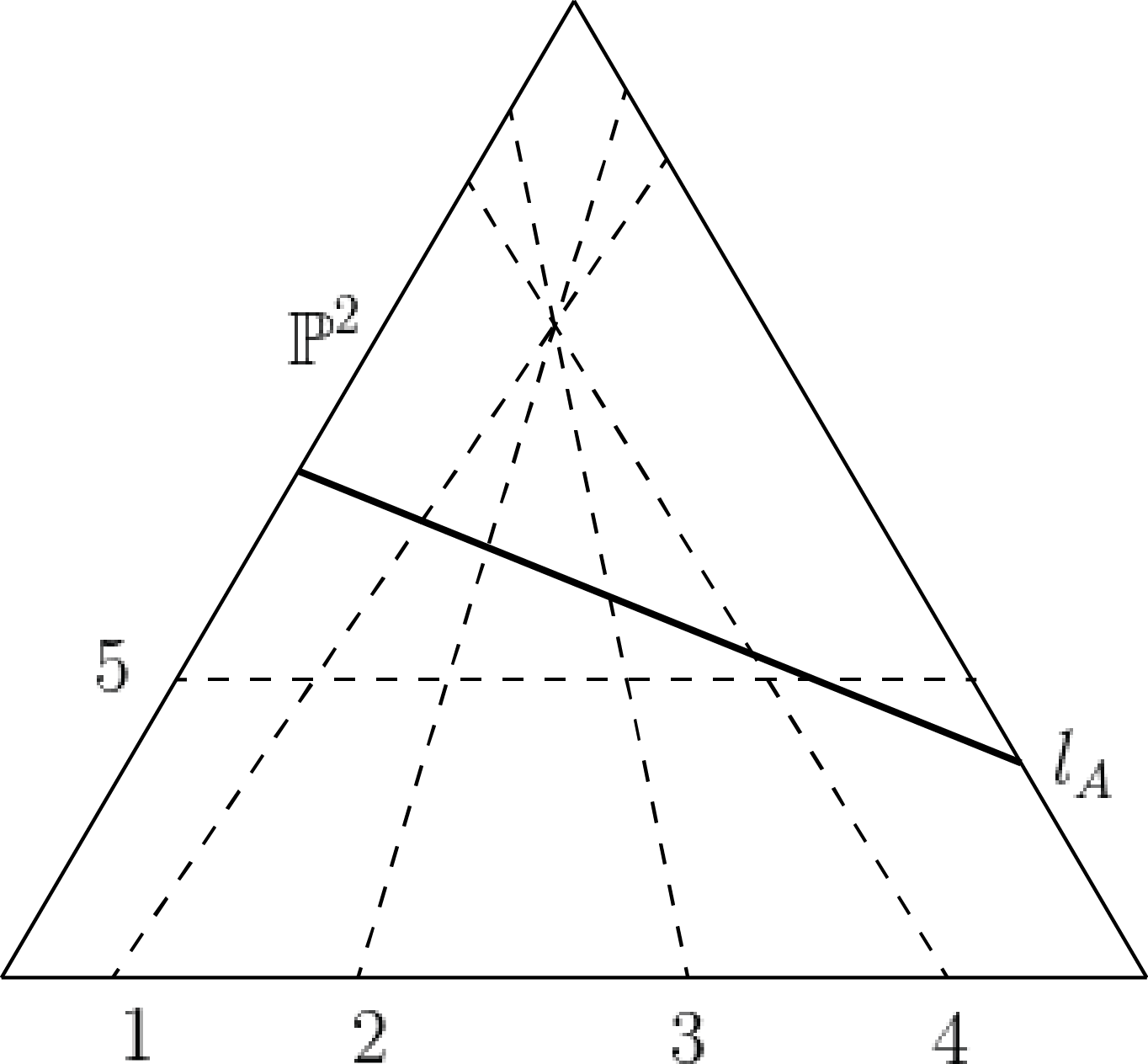}
\endminipage\hfill
\minipage{0.25\textwidth}
  \includegraphics[width=\linewidth]{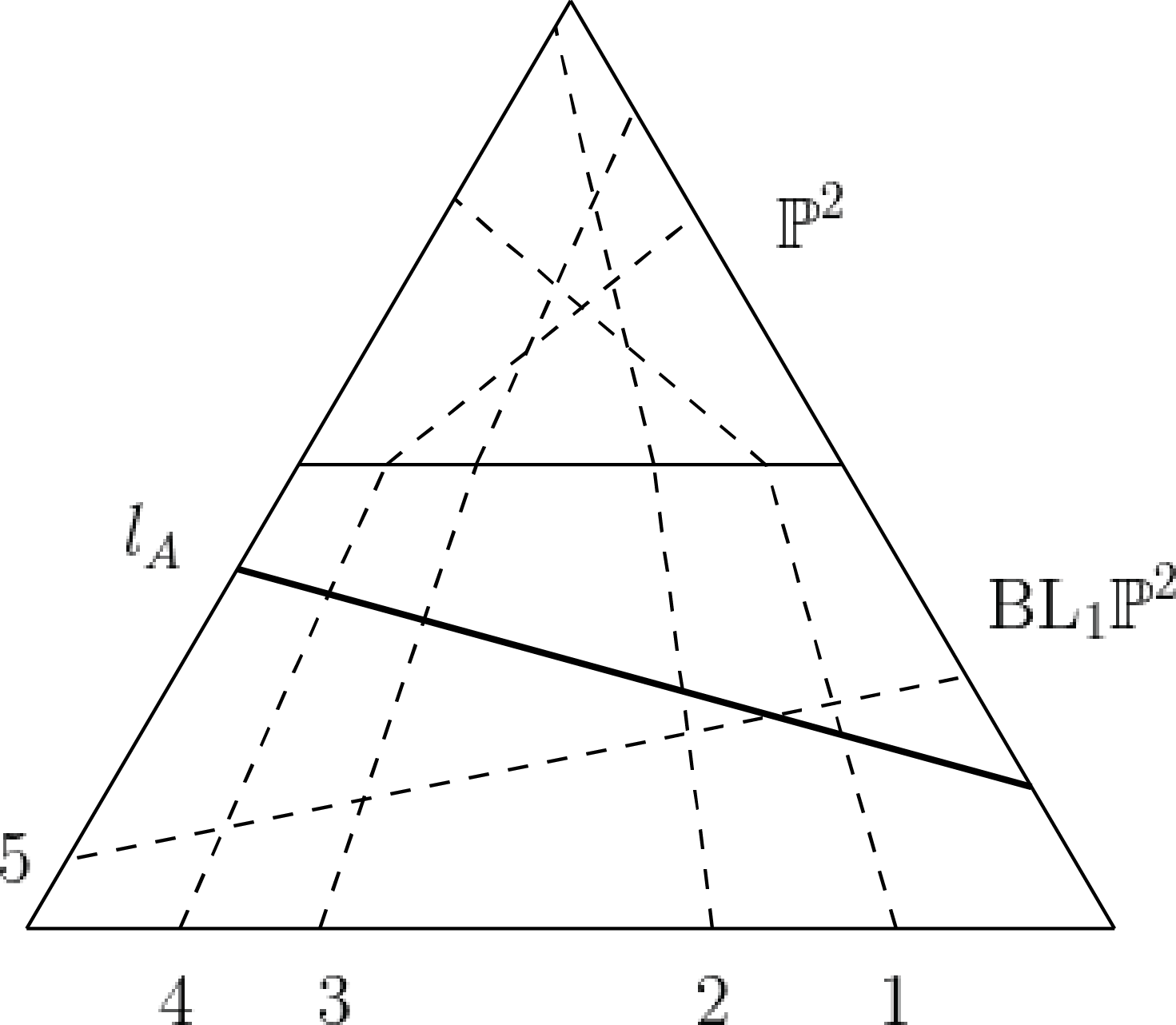}
\endminipage\hfill
\minipage{0.25\textwidth}%
  \includegraphics[width=\linewidth]{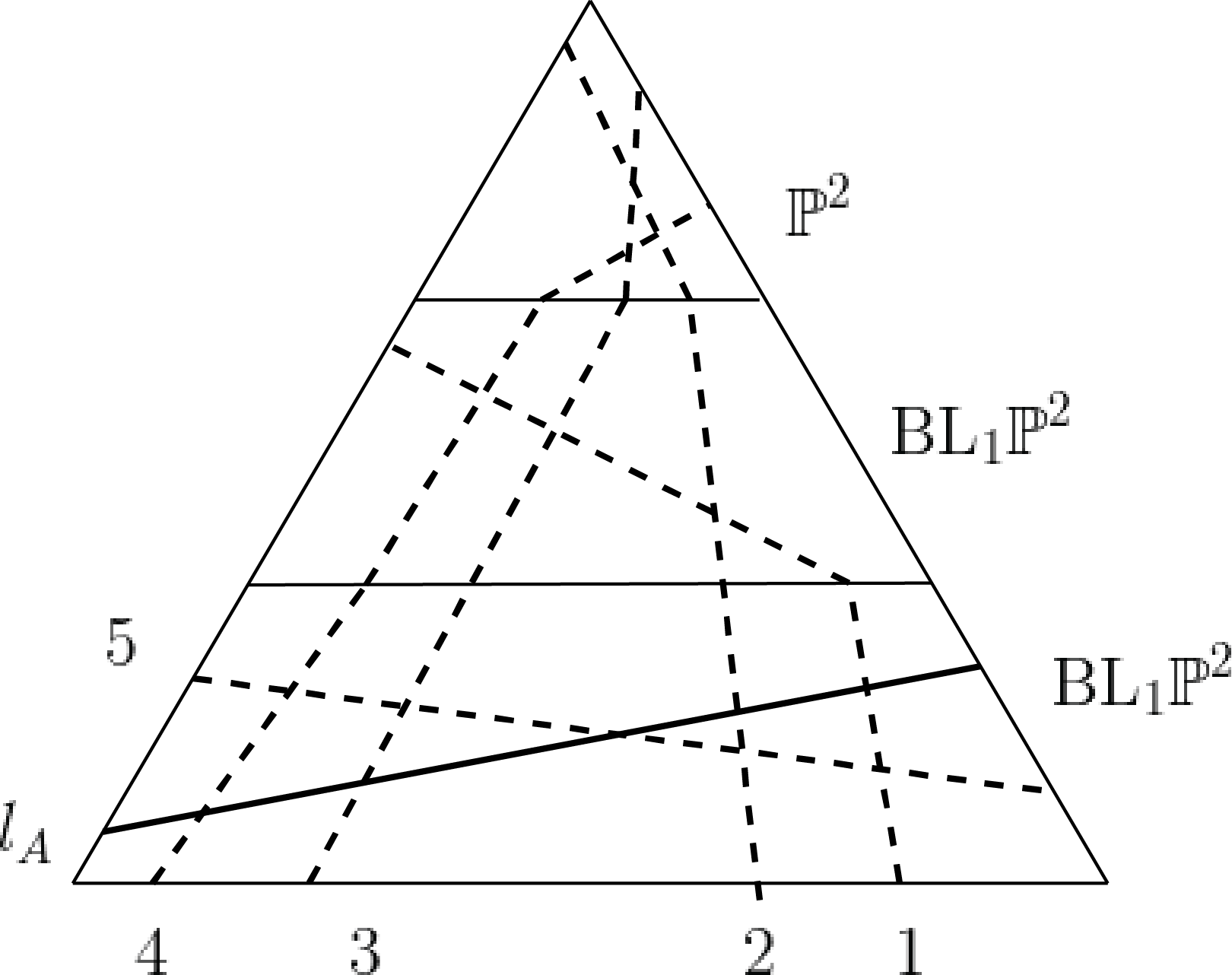}
\endminipage
\caption{
Quadruple point and its generic and non-generic stable replacement.
}
\label{examples2}
\end{figure}

\begin{example}\label{ex:stbr}
Consider a quadruple point in an arrangement of 6 lines-- then there are two possible stable replacements. The starting configuration is stable if the total weight of the intersection point of the four lines $l_1, .., l_4$ is $\leq$ 2. Increasing the weights of all the lines to one causes any singularity with multiplicity larger than two to  become unstable. Generically, the stable replacement has a new component where the 4 lines are separated. The four lines plus the double locus in $\bP^2$ have two dimensional moduli, so that we can further degenerate the configuration to a triple point. In this case, we must blow up the new component, obtaining a surface with three components. Here, the additional surface is a $\bP^2$ with three lines.  Since a configuration of three lines and the double locus in $\bP^2$ has no moduli, we cannot degenerate the configuration any further.  These two cases are all of the possible stable replacements.
\end{example}

 \section{$\R_{\vec w_0}$ as a GIT quotient and some properties of $\R_{\vec w}$}\label{construct}
 The starting point of this section is Lemma \ref{base}, where we show that there are weights $\vec {w_0}$ such that  $\R_{\vec {w_0}}(q) \cong \bP^{n-3}$.  Afterwards, we study some geometric properties of $\R_{\vec w}$ in general, such as the surfaces parametrized and the singularities that appear (Proposition \ref{sing}), as well as the outcome of wall-crossing on our moduli spaces (Lemma \ref{fibers}). 
 
The results of this section do not depend on the $q$ used in the definition of 
$\R_{\vec w}(q)$, so we simplify our notation and we just write  $\R_{\vec w}$.  First, we define our admissible weights.

\begin{definition}\label{def:adw}
Let  $\vec w_{0}=(w_{0_1}, \ldots, w_{0_n})$ be a set of rational numbers such that for every subset $I \subsetneq \{1, \ldots, n\}$ the inequality  $\sum_{i \in I} w_{0_i} \leq 2 $ holds. 
The \textbf{set of admissible weights} is
$$
\mathcal D^{R}_n= 
\{
(w_1, \ldots, w_n) \in \mathbb Q^n \; | \; 1 \geq w_i > 0, \; \sum_{i=1}^n w_i \geq 2, \;  
w_i \geq w_{0_i}
\}
$$
\end{definition}

The chamber decomposition of $\mathcal D(3,n+1)$ induces a chamber decomposition on $\mathcal D^R_n$ where the chambers are separated by the walls $W(I)$ (see Theorem \ref{thm:wallchamber}).

\begin{definition}\label{def:adj}We say that two weights $\vec v$ and $\vec u$ are \textbf{adjacent} if each of them belongs to a chamber in $\mathcal D^R_n$ and those chambers are separated by a single wall $W(I)$.  Sometimes, we say that the weights $\vec u$ and $\vec v$ are \textbf{separated} by $W(I)
$. \end{definition}

Moreover, by Remark \ref{rmk:BH}, to avoid any subtle technicalities, we will assume all our weighs are very generic.\\

Before showing that $\R_{\vec w_0} \cong \mathbb{P}^{n-3}$ (Lemma \ref{base}), we prove a key lemma.

\begin{lemma}\label{lemma1} The subgroup of $\operatorname{SL}(3, \mathbb C)$ that fixes:
\begin{itemize}
\item  three lines $l_n$, $l_{n-1}$ and $l_A$ in general position, and 
\item  $n$ distinct points $\{ l_1 \cap l_A, \ldots, l_n \cap l_A \}$ in $l_A$.
\end{itemize}
is equal to $\mathbb{C}^*$.\end{lemma}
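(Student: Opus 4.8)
The strategy is to coordinatize everything explicitly and reduce to a direct computation. First I would set up coordinates so that the three lines in general position are the coordinate lines: put $l_A = \{z = 0\}$, $l_n = \{x = 0\}$, $l_{n-1} = \{y = 0\}$. An element $g \in \SL(3,\bC)$ fixing each of these three lines (as a set) must be diagonal, since fixing $\{x=0\}$ forces the first row to be a multiple of $(1,0,0)$ up to the action on the hyperplane, and similarly for the others; more precisely, the induced action on $\hat\bP^2$ permutes the coordinate points dually, and fixing all three coordinate lines pins $g$ down to a diagonal matrix $\operatorname{diag}(a,b,c)$ with $abc = 1$. So the subgroup fixing the three lines is a two-dimensional torus $T \cong (\bC^*)^2$.

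\textbf{Imposing the points on $l_A$.} Next I would restrict the action of $T$ to $l_A \cong \bP^1$. On $l_A = \{z=0\}$ with homogeneous coordinates $[x:y]$, the diagonal matrix $\operatorname{diag}(a,b,c)$ acts by $[x:y] \mapsto [ax:by]$, i.e. through the quotient character $a/b$; this is the standard $\bC^*$-action on $\bP^1$ fixing $[1:0]$ and $[0:1]$ (which are the points $l_n \cap l_A$ and $l_{n-1}\cap l_A$). Now I bring in the remaining points: among $\{l_1\cap l_A,\dots,l_n\cap l_A\}$ there are $n$ \emph{distinct} points on $\bP^1$, and since $n \geq 5 > 3$ (indeed the lemma is only used for $n$ large enough that this makes sense), at least one of them, say $l_1 \cap l_A$, is different from both fixed points $[1:0]$ and $[0:1]$. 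A nontrivial element of the $\bC^*$ acting on $\bP^1$ with fixed points $[1:0],[0:1]$ moves every other point, so requiring $g$ to fix $l_1\cap l_A$ forces $a/b = 1$, i.e. $a = b$.

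\textbf{Conclusion.} With $a = b$ and $abc = 1$ we get $g = \operatorname{diag}(a,a,a^{-2})$, a one-parameter family, so the subgroup in question is isomorphic to $\bC^*$. I would also note that this $\bC^*$ does indeed fix everything required: it acts trivially on $l_A = \{z=0\}$ (it scales $[x:y]$ by $a$, which is trivial projectively), hence fixes all $n$ points on $l_A$, and it visibly preserves each coordinate line; and it is nontrivial in $\PGL(3)$, so it is genuinely a copy of $\bC^*$ and not just the center. Finally I would remark that this is exactly the group $G$ appearing later (the stabilizer of $l_A$ pointwise modulo nothing), matching the usage in Lemma \ref{base} and in the Chow quotient construction.

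\textbf{Main obstacle.} The only mildly delicate point is the first step — arguing cleanly that fixing three lines in general position forces $g$ to be diagonal — but this is standard linear algebra once one passes to the dual and observes that the three dual points are a projective frame minus one point, or alternatively by directly writing out the condition that $g$ preserves each of the three coordinate planes in $\bC^3$. Everything else is an immediate $\bP^1$ computation, so I expect no real difficulty; the proof should be short.
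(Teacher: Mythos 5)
Your proof is correct and takes essentially the same route as the paper's: choose coordinates so the three lines are the coordinate lines, note that the subgroup of $\operatorname{SL}(3,\mathbb{C})$ fixing them is the diagonal torus $(\mathbb{C}^*)^2$, and use one of the remaining marked points on $l_A$ (distinct from $l_{n-1}\cap l_A$ and $l_n\cap l_A$) to force the two eigenvalues acting on $l_A$ to coincide, leaving the one-parameter subgroup $\operatorname{diag}(a,a,a^{-2})\cong\mathbb{C}^*$. Only your closing aside is slightly off: the group $G$ used later in the Chow quotient construction is the full pointwise stabilizer of $l_A$, which is three-dimensional ($\mathbb{G}_m\rtimes\mathbb{G}_a^2$), whereas the lemma's $\mathbb{C}^*$ is the subgroup of $G$ that in addition preserves $l_{n-1}$ and $l_n$; this remark plays no role in the proof itself.
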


\begin{proof}
We can suppose without lost of generality that the lines are 
\begin{align*}
l_A:=(x_0=0), & &  l_{n-1}:=(x_1=0), & & l_n:=(x_2=0)
\end{align*}
The subgroup that fixes those lines in $\bP^2$ is $(\bC^*)^2$, and it is given by  matrices of the form  $g= \operatorname{diag}((g_2g_1)^{-1},g_1, g_2)$ which acts on any point in the line $l_A$ by $g \cdot [0:q_1:q_2] \to [0:g_1q_1:g_2q_2].$ By hypothesis, the points $\{ l_1 \cap l_A, \ldots, l_n \cap l_A \}$ on $l_A$ are fixed, 
implying that $g_1=g_2$. \end{proof}

\begin{lemma}\label{base}
Let $\vec w_{0}$ be as in Definition \ref{def:adw}. Then  
$$
\R_{\vec w_{0}} 
\cong
 \mathbb{P}^{n-3} 
  \subset \overline{M}_{(\vec w_{0},1)}(\mathbb{P}^2,n+1),
$$ 
and each fiber of the universal family over $\R_{\vec w_{0}}$
 is a pair
$\left(  \bP^2, \sum_{k=1}^n  w_{0_k}l_k +l_A \right)$
such that 
\begin{enumerate}
\item the $n$ lines $l_i$ cannot all meet at an $n$-tuple point, 
\item any multiple point of multiplicity strictly smaller than $n$ is allowed,
\item none of the lines $l_i$ can overlap with $l_A$, 
\end{enumerate}
\end{lemma}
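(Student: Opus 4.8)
The plan is to realize $\R_{\vec w_0}$ explicitly as a GIT quotient and read off both the abstract isomorphism with $\bP^{n-3}$ and the description of the universal family. First I would fix coordinates so that $l_A=(x_0=0)$ and the $n$ marked points on $l_A$ are the fixed configuration $q$; by Lemma \ref{lemma1}, the subgroup $G\subset \SL(3,\bC)$ preserving $l_A$ together with these $n$ points (and two auxiliary lines in general position, which I may normalize after taking the quotient) is just $\bC^*$. An arrangement of $n+1$ lines $(l_1,\dots,l_n,l_A)$ meeting $l_A$ in the prescribed points $q$ is then determined by, for each $i$, the choice of line through the fixed point $p_i=l_i\cap l_A$; such a line is a point of the pencil $\bP^1_{p_i}$ of lines through $p_i$, minus the point corresponding to $l_A$ itself, so it is an $\bA^1$. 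Thus the locus of such arrangements is (an open subset of) $(\bA^1)^n$ up to the residual $\bC^*$-action, and one computes that the generic orbit is one-dimensional, giving the expected dimension $n-3$ after also accounting for the two normalizations; I would organize this so the quotient is $\bP((\bA^1)^n)\sslash\bC^* $-type data and identify it with $\bP^{n-3}$.

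Next I would pin down the GIT stability with respect to the linearization coming from the weights $\vec w_0$. Because each $w_{0_i}\le 1$ and every proper subsum $\sum_{i\in I}w_{0_i}\le 2$, the only unstable configurations are those where \emph{all} $n$ lines pass through a common point of $l_A$ — but that is impossible since the $n$ points $p_i$ are distinct — together with configurations forcing $l_i$ to coincide with $l_A$. Hence the GIT quotient is a geometric quotient on the relevant locus, it is projective of dimension $n-3$, and it is smooth (a quotient of an open subset of affine space by a free $\bC^*$-action in this range), which forces it to be $\bP^{n-3}$ — alternatively one exhibits the quotient directly as a projective space by choosing affine coordinates. I would then invoke Theorem \ref{thm:wallchamber}, using that $\vec w_0$ lies in the closure of every chamber of $\mathcal D^R_n$ (indeed all its proper subsums are $\le 2$), to identify this GIT quotient with the closure $\R_{\vec w_0}$ inside $\overline{M}_{(\vec w_0,1)}(\bP^2,n+1)$, and to conclude that no stable replacements occur, so every fiber of the universal family is an honest pair $\bigl(\bP^2,\sum_k w_{0_k}l_k+l_A\bigr)$.

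Finally, properties (1)--(3) of the universal family I would read directly from the GIT picture together with Remark \ref{rmk:weights}: property (3) (no $l_i$ overlaps $l_A$) holds because $l_i$ has weight $w_{0_i}<1$ but $l_A$ has weight $1$, so $w_{0_i}+1>1$ would violate log canonicity — and more concretely because $l_i$ meets $l_A$ transversally at the distinct point $p_i$; property (1) (no $n$-tuple point) is exactly the semistability condition just analyzed, since an $n$-fold point would have total weight $\sum_{i=1}^n w_{0_i}\ge 2$ and combined with $l_A$ fails stability (or, the $p_i$ being distinct already rules it out when the point lies on $l_A$; when it lies off $l_A$ it still destabilizes at weight $\ge 2$ — here I should double-check the boundary case $\sum w_{0_i}=2$ and use that the weights are very generic so the sum is not exactly $2$); property (2) follows since any point of multiplicity $m<n$ involves lines of total weight $\le \sum_{i\in I}w_{0_i}\le 2$, which is log canonical.

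\textbf{Main obstacle.} The delicate point is the last step: showing that the closure in the moduli space \emph{equals} the GIT quotient and contains no degenerate shas with extra components. This requires knowing that over $\R_{\vec w_0}$ no wall $W(I)$ of $\mathcal D(3,n+1)$ is crossed — equivalently that $\vec w_0$ sits in (the closure of) the appropriate chamber so that Theorem \ref{thm:wallchamber}(3) applies — and that the family of $\bP^2$'s with the prescribed line configuration is already a family of stable pairs, hence its closure adds no boundary strata coming from blow-ups. Verifying that the very-generic choice of $\vec w_0$ avoids the equality $\sum_{i\in I}w_{0_i}=2$ and that semistability coincides with the moduli-theoretic stability is where the real content lies; the identification of the quotient with $\bP^{n-3}$ is then essentially bookkeeping with the pencils $\bP^1_{p_i}$.
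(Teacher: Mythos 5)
Your proposal is correct and follows essentially the same route as the paper: fix $l_A$ and the $n$ intersection points, normalize two further lines so that Lemma \ref{lemma1} leaves a residual $\bC^*$, parametrize each remaining line by its pencil through $p_i$ to get $(\bA^1)^{n-2}\setminus\{0\}$ (the deleted point being the $n$-fold point, excluded since $\sum_i w_{0_i}>2$), take the $\bC^*$-quotient to obtain $\bP^{n-3}$, read off (1)--(3) from the weight inequalities, and use properness of $\bP^{n-3}$ to see the closure adds no further strata. One small caveat: the remark that smoothness and projectivity "force" the quotient to be $\bP^{n-3}$ is not valid reasoning on its own, but the alternative you offer — exhibiting the quotient directly as projective space in explicit coordinates — is exactly what the paper does, so this does not affect correctness.
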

\begin{proof}
Let $l_A$ be the line with weight $w_A =1$ that induces the restriction morphism 
$$
M_{(\vec w_0,1)}(\mathbb{P}^2,n+1)
\to 
M_{0,\vec w_0}.
$$

To prove (1), recall that an $n$-tuple point is unstable if and only if the sum of the weights $\sum_{i=1}^n w_{0_i} > 2$, which is true by assumption. 

Following the proof of (1), we note that (2) holds because of the assumption that for every subset $I \subsetneq \{1, ..., n\}$ the sum of the weights is $\leq 2$. 

To prove (3), we recall that a multiple line is unstable if the sum of the weights is greater than 1. Since the weight $w_A$ of the line $l_A$ is already 1, no other line can overlap with it.

Let $(X,D)$ be any  configuration parametrized by $\R_{\vec w_0}$.  By (1) and (3), we can suppose that the lines $l_{n-1}$, $l_n$ and $l_A$ are fixed and in general position.  By definition, the points
$\{ l_1 \cap l_A, \ldots, l_n \cap l_A \} \subset l_A$
induce  the equivalence class $q \in M_{0,n}$, and thus we can  fix these points.  

We can now demonstrate that $\R_{\vec w_0} \cong \bP^{n-3}$.  First note that the parameter space of each line $l_i$ with $1 \leq i \leq n-2$ is $\bA^1$, because the intersection $l_i \cap l_A$ is fixed.
We can choose coordinates on each $\bA^1$ so that the point $0 \in \bA^1$  parametrizes whenever the line $l_i$ coincides with the fixed intersection $l_n \cap l_{n-1}$.   Then the parameter space of the $(n-2)$ lines $l_1, \ldots, l_{n-2}$ is 
$(\bA^1)^{n-2} \setminus (0, \cdots  , 0)$, since we cannot have an $n$-tuple point by (1). Therefore, by Lemma \ref{lemma1}, we conclude that $$
\R_{\vec w_0} \cong \bA^{n-2} \setminus (0, \cdots, 0) \sslash \bC^* \cong \bP^{n-3}.
$$\end{proof}

Next,  we construct a family of shas over $R_{\vec w_0}$.  Before doing that, we set up some notation.

\begin{notation}\label{notation}
We choose a coordinate system $[t_0: t_1 : t_2] \in \bP^2$ such that:
\begin{align*}
l_A:=(t_0=0),
& &  
l_{n-2} \cap l_A:=[0:0:1], & & l_{n-1}:=(t_2=0), 
&& 
l_{n}:=(t_1-t_2=0).
\end{align*}
and we select the point  $q \in M_{0, \vec w_0}$ induced by the 
following configuration of points in $l_A$
\begin{align*}
\{  [0:a_1:1], \ldots,  [0:a_{n-3}:1], [0:0:1], [0:1:0], [0:1:1]   \},
\end{align*}
Under this choice of coordinates,  $[s_1: \ldots : s_{n-2}] \in R_{\vec w_0}(q)$ parametrizes the following configuration of lines
with $1 \leq i \leq (n-3)$
\begin{align*}
l_i:=(t_1-a_it_2+s_it_0=0), & & l_{n-2}:=(s_{n-2}t_0+t_1=0),
&& l_{n-1}:=(t_2=0), && l_n:=(t_1-t_2=0).
& &
\end{align*}
\end{notation}

In the following lemma, we consider $R_{\vec w_0} \cong \mathbb P^{n-3}$ with coordinates $[s_1, \ldots, s_{n-2}]$ as above and the projective space $\mathbb P^{n-1}$ with coordinates 
$[z_1, \ldots, z_{n}]$.
We exclude the $n=4$ case for convenience of notation (see Remark \ref{rmk:basecases}).

\begin{lemma}\label{lemma:universal} 
For $n \geq 5$, let $\mathcal U_{\vec w_0}$ be the blow up of $\mathbb P^{n-1}$ at the line defined by
$$
Z:=\{ z_{k}-z_{k+2}  =0 \; | \; 1 \leq k \leq n-2 \}.
$$
and let  $\sigma_i$ be the strict transform of the following $n$ hyperplanes in $\mathbb P^{n-1}$ with $1 \leq i \leq n-3$.
\begin{align*}
H_i &:=(a_2z_3-a_1z_4)-a_i(z_3-z_4)+(a_2-a_1)(z_i-z_{i+2}) = 0
\\
H_{n-2} &:= (a_2-a_1)(z_{n-2}-z_n)+a_2z_3-a_1z_4 = 0 
\\
H_{n-1} &:=  z_3-z_4 =0 
\\
H_n &:= (a_2-1)z_3-(a_1-1)z_4 =0 
\end{align*}
Then there exists a flat, proper morphism
$
\phi_{\vec w_0}\; :
\mathcal U_{\vec w_0} \to R_{\vec w_0}
$
such that for every $\vec s \in  R_{\vec w_0}$ the fiber
$\phi_{\vec w_0}^{-1}(\vec s)$  is isomorphic to $\mathbb P^2$.
Moreover, if $E_{\vec w_0} \subset \mathcal U_{\vec w_0}$   is the exceptional divisor, then the configuration of lines
\begin{align*}
l_i:= \phi_{\vec w_0}^{-1}(\vec s) \cap  \hat\sigma_i
& &
l_A:=\phi_{\vec w_0}^{-1}(\vec s) \cap E_{\vec w_0}
\end{align*}
define the stable sha of weight $\vec w_0$ parametrized by $\vec s$.
\end{lemma}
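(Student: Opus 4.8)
The plan is to exhibit $\mathcal U_{\vec w_0}$ explicitly as a blow-up and match its fibers and marked divisors to the configurations described in Notation \ref{notation}, so that $\phi_{\vec w_0}$ is simply the composition of the blow-down $\mathcal U_{\vec w_0}\to \mathbb P^{n-1}$ with a suitable linear projection. First I would set up the rational map $\mathbb P^{n-1}\dashrightarrow R_{\vec w_0}=\mathbb P^{n-3}$, $[z_1:\cdots:z_n]\mapsto[s_1:\cdots:s_{n-2}]$, where $s_i$ is the linear form in the $z_k$ obtained by solving the equations of the hyperplanes $H_i$ for the parameter $s_i$ appearing in the family of lines $l_i:=(t_1-a_it_2+s_it_0=0)$ of Notation \ref{notation}. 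Concretely, identifying $[t_0:t_1:t_2]$ on a fiber $\cong\mathbb P^2$ with the data $(z_3-z_4\, , \, \text{etc.})$, the equation $H_i=0$ is precisely $l_i=0$ once one substitutes the chosen coordinates; so the $H_i$ are the "universal" versions of the lines $l_i$, the line $Z$ is the base locus of the projection (equivalently the locus where all $l_i$ pass through $l_n\cap l_{n-1}$, which would be the forbidden $n$-tuple point), and blowing up $Z$ resolves the projection to an honest morphism $\phi_{\vec w_0}\colon\mathcal U_{\vec w_0}\to\mathbb P^{n-3}$.

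Next I would verify flatness and properness together with the fiber description. Properness is automatic since $\mathcal U_{\vec w_0}\to\mathbb P^{n-1}$ is projective and $\mathbb P^{n-1}$ is proper; for flatness it suffices (by "miracle flatness", since $\mathbb P^{n-3}$ is smooth) to check that all fibers have the same dimension $2$. The fiber over $\vec s$ is the strict transform in the blow-up of the linear subspace $L_{\vec s}\subset\mathbb P^{n-1}$ cut out by the $s$-weighted combinations of the $z_k$ that vanish; since $Z\subset L_{\vec s}$ and $\dim L_{\vec s}=2$, the strict transform is isomorphic to $\mathrm{Bl}_{Z}L_{\vec s}$. Here the key point is that $Z$ meets each such $L_{\vec s}$ in a single reduced point (because $Z$ is a line and the $s$-equations, being $n-3$ independent linear forms, cut it transversally down to a point), so $\mathrm{Bl}_Z L_{\vec s}=\mathrm{Bl}_{\mathrm{pt}}\mathbb P^2$... which is $\F$, not $\mathbb P^2$. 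To get $\mathbb P^2$ one instead checks that $L_{\vec s}$ already contains $Z$ with the right incidence so that the strict transform in the specific blow-up $\mathcal U_{\vec w_0}$ is $\mathbb P^2$: the correct statement is that $L_{\vec s}\cong\mathbb P^2$ and the blow-up, restricted to it, is an isomorphism because $Z$ is \emph{contained} in $L_{\vec s}$ as a Cartier divisor only for the full ambient space — I would instead argue that each fiber of $\phi_{\vec w_0}$ maps isomorphically to its image $L_{\vec s}\cong\mathbb P^2$ under the blow-down, using that a general $L_{\vec s}$ contains $Z$ and the exceptional $\mathbb P^{n-3}$-bundle structure restricts to give a section over $L_{\vec s}$, hence the fiber is $\mathbb P^2$ blown up at $Z\cap L_{\vec s}$ reglued — the cleanest route is to compute directly in affine charts of the blow-up that $\phi_{\vec w_0}^{-1}(\vec s)\cong\mathbb P^2$.

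Finally I would identify the marked divisors. On the fiber $\phi_{\vec w_0}^{-1}(\vec s)\cong\mathbb P^2$ with coordinates read off from $(z_3-z_4:\ \dots)$, the intersection with the exceptional divisor $E_{\vec w_0}$ is a line, which I would check equals $(t_0=0)=l_A$, matching Notation \ref{notation}; and $\hat\sigma_i\cap\phi_{\vec w_0}^{-1}(\vec s)$ is cut out by $H_i$, which under the coordinate identification is exactly $t_1-a_it_2+s_it_0=0$ for $1\le i\le n-3$, and the analogous identifications for $i=n-2,n-1,n$. To conclude that this configuration is the stable sha of weight $\vec w_0$ parametrized by $\vec s$, I would invoke Lemma \ref{base}: the resulting pair $(\mathbb P^2,\sum w_{0_k}l_k+l_A)$ is log canonical for weights $(\vec w_0,1)$ precisely because no $n$-tuple point occurs (the locus forcing it is $Z$, which has been blown up and no longer lies on a fiber), and Lemma \ref{base} shows $R_{\vec w_0}$ parametrizes exactly these pairs, so the family $\phi_{\vec w_0}$ with divisors $\sum\hat\sigma_i+E_{\vec w_0}$ restricts fiberwise to the universal family. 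The main obstacle I expect is the fiber computation: pinning down that each $\phi_{\vec w_0}^{-1}(\vec s)$ is $\mathbb P^2$ (and not $\F$) requires a careful chart-by-chart analysis of how the blow-up of the specific line $Z$ interacts with the moving $2$-plane $L_{\vec s}$, in particular showing $Z\subset L_{\vec s}$ so that the blow-up does not actually modify the generic fiber; the $n=4$ exclusion is exactly where this degenerates and is handled separately in Remark \ref{rmk:basecases}.
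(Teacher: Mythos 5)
Your overall strategy is the same as the paper's: view $Z$ as the indeterminacy locus of the linear projection $\pi_Z:\mathbb P^{n-1}\dashrightarrow\mathbb P^{n-3}$ given by $s_k=z_k-z_{k+2}$, resolve it by the blow-up, and match the hyperplanes $H_i$ and the exceptional divisor with the lines of Notation \ref{notation}. But the crux of the lemma --- that every fiber of $\phi_{\vec w_0}$ is $\mathbb P^2$ --- is exactly where your argument breaks down. You assert that ``$Z$ meets each such $L_{\vec s}$ in a single reduced point,'' which is false and contradicts your own earlier (correct) remark that $Z\subset L_{\vec s}$: the closure $L_{\vec s}$ of the fiber over $\vec s$ is cut out by the proportionality conditions $s_j(z_k-z_{k+2})-s_k(z_j-z_{j+2})=0$, and all of these vanish identically on $Z$, so $Z$ is always contained in $L_{\vec s}$ and is a line, hence a Cartier divisor, in the plane $L_{\vec s}\cong\mathbb P^2$. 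Consequently the strict transform of $L_{\vec s}$ in $Bl_Z\mathbb P^{n-1}$ is $Bl_{Z\cap L_{\vec s}}L_{\vec s}=Bl_Z L_{\vec s}\cong L_{\vec s}\cong\mathbb P^2$ (blowing up a Cartier divisor is an isomorphism), and $E_{\vec w_0}$ meets it precisely along the line lying over $Z$. Your text wavers between the wrong conclusion ($\mathbb F_1$ fibers, which would falsify the statement), a garbled version of the right one (``Cartier divisor only for the full ambient space,'' ``reglued''), and a deferral to an unperformed ``chart-by-chart analysis''; as written, the fiber identification --- and with it flatness via your miracle-flatness route and the identification of $E_{\vec w_0}\cap\phi_{\vec w_0}^{-1}(\vec s)$ with $l_A$ --- is not established.

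For comparison, the paper avoids charts entirely by exhibiting an explicit parametrization of each fiber: the forms $\zeta_1,\dots,\zeta_n$ define $\zeta_{\vec s}:[t_0:t_1:t_2]\mapsto[\zeta_1:\cdots:\zeta_n]$ with image $\pi_Z^{-1}(\vec s)$ (since $\zeta_i-\zeta_{i+2}=s_it_0$), with $\zeta_{\vec s}^{-1}(Z)=(t_0=0)$, and with $a_2\zeta_3-a_1\zeta_4=(a_2-a_1)t_1$, $\zeta_3-\zeta_4=(a_2-a_1)t_2$. This single computation simultaneously shows the fiber is $\mathbb P^2$, identifies $H_i$ restricted to the fiber with the line $l_i$ of Notation \ref{notation}, and identifies the exceptional locus with $l_A=(t_0=0)$, after which assigning weights $\vec w_0$ to the $\sigma_i$ and $1$ to $E_{\vec w_0}$ gives the stable sha by Lemma \ref{base}. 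Your final step (invoking Lemma \ref{base} to conclude stability) is fine, but it only becomes available once the fiber and divisor identifications are actually proved; supplying either the containment argument above or the explicit parametrization would close the gap.
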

\begin{proof}
Let 
$\pi_Z: \mathbb P^{n-1} \to R_{\vec w_0}$ be the projection defined by $\{s_k= z_{k}-z_{k+2} \; | \; 1 \leq k \leq n-2\}$.  Note that $Z$ is the indeterminacy loci of $\pi_Z$, and that given a point $\vec s \in R_{\vec w_0}$, we have  $\pi_Z^{-1}(\vec s) \cong \mathbb P^2$. 
Therefore, the map $ \mathcal U_{\vec w_0} \to R_{\vec w_0}$ is a  $\mathbb P^2$-fibration  obtained by the composition 
$\mathcal U_{\vec w_0} \to \bP^{n-1} \to R_{\vec w_0}$.  

The following functions with $2 \leq m \leq \frac{n}{2}$  if $n$ is even, 
and $2 \leq m \leq \frac{(n+1)}{2}$ if $n$ is odd.
\begin{align*}
\zeta_1&=t_1-a_1t_2+s_1t_0,
&
\zeta_2&=t_1-a_2t_2+s_2t_0,
\\
\zeta_{2m-1}&= \zeta_1 - t_0 \sum_{k=0}^{m-2} s_{2k+1},
& 
\zeta_{2m} &= \zeta_2 - t_0 \sum_{k=1}^{m-1} s_{2k}
\end{align*} 
define, for a fixed $\pi^{-1}_Z(\vec s)$, a map  
$
\zeta_{\vec s}:
\mathbb P^2 \to \pi^{-1}_Z(\vec s)
$ 
given by  
$$
\zeta_{\vec s}:
 [t_0,t_1,t_2] \to [\zeta_1, \zeta_2, \ldots,\zeta_n ].
$$
Indeed, we can verify the image of the map $\zeta_{\vec s}$ is $\pi_Z^{-1}(\vec s)$ since  
\begin{align*}
\pi_Z\left( \zeta_{\vec s}[t_0,t_1,t_2] \right)
&=
[
\zeta_1-\zeta_3, \zeta_2-\zeta_4, \ldots, \zeta_{n-2}-\zeta_n]
=
[
s_1t_0, s_2t_0, \ldots, s_nt_0
].
\end{align*}
We also note that the map is not defined for $(t_0=0)$ because 
$\zeta_{\vec s}^{-1}(Z)=(t_0=0)$.
Moreover, by the definition of the $H_i$ above,  
and the equations of the lines given in Notation \ref{notation}
it holds that
\begin{align*}
\zeta_{\vec s}(l_i)= \pi_Z^{-1}(\vec X) \cap H_i
& &
\zeta_{\vec s}(l_A)  &= Z
\end{align*}
These equalities follow at once by observing that
$\zeta_3=\zeta_1-t_0s_1$, $\zeta_4=\zeta_2-t_0s_2$ as well as
\begin{align*}
a_2\zeta_3-a_1\zeta_4
&=
(a_2-a_1)t_1
& 
\zeta_3-\zeta_4
=
(a_2-a_1)t_2
& &
\zeta_i-\zeta_{i+2} =s_{i}t_0.
\end{align*}
Finally, we assign the weights given by $\vec w_0$ to the $n$ hyperplanes and weight 1 to the exceptional divisor, we get a family of shas with respect to the weights $\vec w_0$.
\end{proof}

\subsection{Generalities on $\R_{\vec w}$} We start with a explicit description of the surfaces parametrized by 
$\R_{\vec w}$.

\begin{proposition}\label{sing}
Let $(X,D)$ be a sha parametrized  by $\R_{\vec w}$, then the following hold:
\begin{itemize}
\item[I] The only  singularities  in $(X,D)$ are of the form 
$p(J)$ (see Notation \ref{notate}). 
In particular,  the shas never have overlapping lines. 
\item[II] The dual graph $Graph(X)$ of $X$ is  a rooted tree where the rooted vertex is the unique
surface containing the  line $l_A$. 
\item[III] All the components of  $X$ are a blow up of  $\mathbb P^2$ at $k \geq 0$ points. In particular, the stable replacement of any sha 
parametrized by $R_{\vec w}$ is obtained by blowing up isolated points. That is, we never have to blow down a $(-1)$-curve. 
\end{itemize}
\end{proposition}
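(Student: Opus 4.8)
The plan is to analyze the stable replacement procedure described in Section \ref{stablerp} and track inductively which surfaces and singularities can appear, starting from the base case $\R_{\vec w_0} \cong \bP^{n-3}$ (Lemma \ref{base}). For part (I), I would argue as follows: by Lemma \ref{base}(3) none of the lines $l_i$ can overlap with $l_A$ for weights $\vec w_0$, and since each $l_i$ with $1 \le i \le n$ meets $l_A$ in a fixed \emph{distinct} point $l_i \cap l_A$ (these points realize the fixed generic $q \in M_{0,\vec w}$), two lines $l_i, l_j$ can never coincide on the root component. Now I would run an induction on the number of wall-crossings $W(I)$ separating $\vec w_0$ from $\vec w$ (Theorem \ref{thm:wallchamber}, Definition \ref{def:adj}): by the stable replacement description in Section \ref{stablerp}, crossing a wall $W(I)$ blows up a multiple point $p(I)$ (never supported on $l_A$ by Remark \ref{rmk:singularities}(1)) and attaches a $\bP^2$ along the exceptional divisor $E_{p(I)}$, with the lines $l_i$, $i \in I$, crossing into the new $\bP^2$; as already noted there, the lines are separated in the double locus $E_{p(I)}$, so they cannot overlap in the new component either. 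Since the walls $\widetilde W(I)$ (which would produce coincident lines) never get crossed inside $\mathcal D^R_n$ — only the walls $W(I)$ appear, as remarked after Definition \ref{def:adw} — no sha parametrized by $\R_{\vec w}$ ever acquires an overlapping-line singularity, giving (I).

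For part (II), I would again induct on wall-crossings. The base case is trivial since $X = \bP^2$ for $\R_{\vec w_0}$, a single-vertex tree whose vertex contains $l_A$. For the inductive step, crossing $W(I)$ replaces some component $X_j$ by $Bl_{p(I)}X_j \cup_{E_{p(I)}} \bP^2$, i.e.\ it attaches one new vertex to the vertex $X_j$ along a single new edge $E_{p(I)}$. Attaching a leaf to a tree yields a tree, so $Graph(X)$ remains a tree; and since the blow-up center $p(I)$ is never on $l_A$, the component containing $l_A$ is unchanged and remains the root. I should also address that a priori several multiple points could be destabilized simultaneously at a higher-codimension wall intersection, but one can pass through the walls one at a time (choosing a path in $\mathcal D^R_n$ crossing walls transversally and singly), so the inductive description suffices to conclude $Graph(X)$ is a rooted tree.

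For part (III), the claim is that every component of $X$ is an iterated blow-up of $\bP^2$ at finitely many \emph{isolated points}, so stable replacement never requires blowing down a $(-1)$-curve. This follows from the same induction: the root component starts as $\bP^2$ and is only ever modified by blowing up points $p(I) \subset X$ when crossing walls $W(I)$; each freshly attached component is literally $\bP^2$ and subsequently is only blown up at further points. Since blowing up a point on a blow-up of $\bP^2$ at $k$ points yields a blow-up of $\bP^2$ at $k+1$ points, the class of surfaces is preserved, and the stable replacement map is a composition of smooth point blow-ups with no contractions. Here I would invoke Lemma \ref{base} for the base surface and the explicit attachment formula $\tilde X = Bl_{p(I)}X \cup_{E_{p(I)}} \bP^2$ from Section \ref{stablerp}.

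The main obstacle I anticipate is making the wall-crossing induction genuinely rigorous: I must justify that one can reach any chamber of $\mathcal D^R_n$ from the chamber of $\vec w_0$ by a sequence of \emph{single} wall-crossings $W(I)$ (never a $\widetilde W(I)$, and never a non-transverse multi-wall intersection), and that at each step the stable replacement is exactly the one-point-blow-up-plus-$\bP^2$ described in Section \ref{stablerp} rather than something more complicated (e.g.\ a point whose destabilizing set $I$ meets $l_A$, or a singularity arising on a non-root component). The first point is handled by the chamber structure of $\mathcal D(3,n+1)$ restricted to $\mathcal D^R_n$; the second by Remark \ref{rmk:singularities}(1) together with part (I) just proved, which rules out overlapping lines and confines all destabilized singularities to multiple points $p(I)$ with $p(I) \notin l_A$. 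Once this bookkeeping is in place, parts (I)--(III) all fall out of the single inductive step.
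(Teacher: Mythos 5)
Your treatment of parts (I) and (II) follows essentially the same route as the paper: induction on a chain of adjacent weights from $\vec w_0$ to $\vec w$, with Lemma \ref{base} as the base case and the wall-crossing/stable-replacement description of Section \ref{stablerp} for the inductive step, plus the observation that $p(I)$ never lies on $l_A$ and that the lines are already separated along the gluing locus. One caveat: your claim that the walls $\widetilde W(I)$ ``never get crossed inside $\mathcal D^R_n$'' is not literally true --- as subsets of $\mathcal D(3,n+1)$ these walls do meet $\mathcal D^R_n$ and are crossed as the weights increase; the correct statement (and the one the paper makes) is that they are irrelevant because, by the inductive hypothesis, the shas parametrized by $\R_{\vec \gamma_{m-1}}$ have no coincident lines, so crossing $\widetilde W(K)$ changes neither the space nor its shas. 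Since you do prove the absence of overlapping lines along the way, this is a fixable imprecision rather than a fatal error.

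Part (III), however, has a genuine gap: your argument is essentially circular. You assert that each wall crossing only ever blows up a point and attaches a $\bP^2$, hence ``the class of surfaces is preserved'' --- but the assertion that no contraction of a $(-1)$-curve is ever needed is precisely what (III) claims, and Section \ref{stablerp} only describes the \emph{generic} stable replacement. In Alexeev's theory, non-generic degenerations can force the stable model to have components that are \emph{not} blow-ups of $\bP^2$ at points: e.g.\ a $\bP^1\times\bP^1$ component obtained by blowing up two non-log-canonical points and contracting the strict transform of the weight-one line joining them (compare sha \#8 in \cite[Fig 5.12]{alexeev2013moduli}). The paper rules this out by invoking the criterion of \cite[Thm 5.7.2(ii)]{alexeev2013moduli} --- such a component requires, among other things, that no extra line or double-locus component meets the would-be-contracted line transversally --- and then observing that in the present setting this condition always fails, since either the double locus or the line $l_A$ meets that line transversally. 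Your proposal contains no argument of this kind, so as written (III) does not follow from your induction.
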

\begin{proof}
Let $\vec w \in \mathcal D^R_n$ be an admissible weight and consider a 
sequence of weights $\vec \gamma_1, \ldots,\vec \gamma_m$ such that 
$\vec \gamma_1:=\vec w_0$, $\vec \gamma_m :=\vec w$, 
the weights $\vec \gamma_{i} \leq \vec  \gamma_{i+1}$ 
are adjacent to each other (see Definition \ref{def:adj}),
and $m$ is the minimal length of such sequences. We prove  our proposition by induction on  $m$. The case $m=1$ follows from
Lemma \ref{base}. In that case, the dual graph  for every pair is a point. 

We suppose the statement holds for $m-1$. Let 
$\vec \gamma_m:=\vec w$
and let $\vec \gamma_{m-1}:= \vec v$ be two adjacent weights separated by the wall $W(I)$.  
We highlight that walls of type $\widetilde W(K)$  
in $\mathcal D(3,n+1)$ do not modify neither  $\R_{\vec v}$  nor the shas parametrized by it because the space $\R_{\vec v}$ only parametrizes shas with isolated multiple points by our inductive hypothesis.  
By case (2) in Theorem \ref{thm:wallchamber}, there is a contraction 
$$
\pi_m: \overline{M}_{(\vec w,1)}(\mathbb{P}^2,n+1)  \to 
\overline{M}_{(\vec v,1)}(\mathbb{P}^2,n+1).
$$
Let $(X',D')$ be an arbitrary sha with at least one $p(I)$ singularity
and parametrized by a point $z \in R_{\vec v}$.   We will show that 
any shas   $(X',D')$ parametrized by $\pi^{-1}_m(z)$
have only multiple point singularities.  

By  Subsection \ref{stablerp}, the fibers of  $\pi_{m}$ parametrize a new sha $(X,D)$ containing a new $\mathbb{P}^2$ component with  the lines 
$\{ l_{i_1} \ | \ i_k \in I \}$. 
 Therefore, the fiber of $\pi_{m}$ over the point parametrizing $(X',D')$ is the moduli associated to the pairs
$
\left( \mathbb{P}^2, l_{i_1}+\ldots + l_{i_k} \right)
$
that satisfy the following conditions:
\begin{enumerate} 
\item The lines cannot all overlap in an $|I|$-tuple point, because this is precisely the singularity we destabilized.
\item The pair can have any singularity of the form $p(J):=\cap_{i_k \in J} l_{i_k}$ with $J$ properly contained in $I$, because we are only destabilizing one type of singularitiy. We must cross more walls to destabilize $p(J)$. 
\item  
Let $H_{0}$ be the hyperplane obtained by intersecting the new  $\mathbb{P}^2$ with the other components of $\tilde X$. 
Then the  lines $l_{i_s}$ cannot overlap with $H_0$.
\item The equivalence class induced by the intersection of the lines $l_{i_s}$ with the gluing locus is fixed because 
the sha $(X', D')$ is fixed. 
\end{enumerate}
These are precisely  the same conditions used in the proof of Lemma \ref{base} with the gluing locus playing the role of $l_A$. Therefore,  every
positive dimensional  fiber of $\pi_m$ is isomorphic to $\mathbb{P}^{(|I|-3)}$.  The new shas $(X,D)$  have at worst mulitple point singularities, because the lines 
$\{ l_{i_1} \ | \ i_k \in I \}$ cannot overlap in the new component $\mathbb{P}^2 \subset X$ by 
the fourth condition above.  The singularities of $(X,D)$ away from this $\bP^2$ are also multiple points by our hypothesis on the singularities of $(X',D')$.\\

Part (II) follows from the previous argument because
the wall crossing between two adjacent  weights $\vec v$ and $\vec u$ adds a new vertex  to $Graph(X')$ corresponding to the new $\bP^2$.   The multiple points never occur in $l_A$, and so $l_A$ is always contained in a single surface which will be our root.\\  

Finally, we prove Part (III). In the absence of overlapping lines, as in our case,  \cite[Thm 5.7.2 (ii)]{alexeev2013moduli} states that
a $\mathbb P^1 \times \mathbb P^1$ component
is only obtained from a configuration of points 
 with the following characteristics:
\begin{enumerate} 
\item Given a $\mathbb P^2$-component with lines 
$\{l_i \}$, 
there are exactly two non-log-canonical points in the configuration of those lines.
\item The line $l_k$ between the two-non log canonical points have weight 1.
\item There is not an additional line $l_s$  or a component of the double locus intersecting $l_k$ transversally. 
\end{enumerate}
Under the above conditions, one must blow up the two points and contract the strict transform of the line between them (see \cite[Figure 5.8]{alexeev2013moduli}).
%\begin{figure}[h!]
%\includegraphics[scale=0.12]{mult2}
%\caption{Configuration of lines 
%leading to a $\mathbb P^1 \times \mathbb P^1$ component
%}\label{fig:mult}
%\end{figure}

To clarify this last condition, the reader should compare the following shas from  \cite[Fig 5.12]{alexeev2013moduli}.  In sha \#3, line $l_3$ intersects $l_4$ and  prevents a line from being contracted in the $\textrm{Bl}_2 \bP^2$ component, so that we do \emph{not} obtain a $\bP^1 \times \bP^1$. In contrast,  in sha \#8, there does not exist a similar line intersecting $l_1$, in which case the sha has a $\bP^1 \times \bP^1$ as the corresponding component.  \\

In particular, condition (3) will never happen in our case, as we always have either the double locus or the line $l_A$ intersecting the line $l_k$ transversally.  \end{proof}

The following result will be important for proving that 
$R_{\vec w}$ is smooth.
\begin{lemma}\label{fibers}
Let $\vec v \geq \vec u$  be adjacent weights in $\mathcal D^R_n$
separated by the wall $W(I)$. 
Let 
$$
\pi_{\vec v, \vec u}: \overline{M}_{(\vec v,1)}(\mathbb{P}^2,n+1)  \to 
\overline{M}_{(\vec u,1)}(\mathbb{P}^2,n+1)
$$
 be the associated wall crossing morphism.  Then its restriction 
$\phi_{\vec v, \vec u}: \R_{\vec v}  \to \R_{\vec u} $  has (scheme-theoretic) fibers equal to  $\mathbb{P}^{(|I|-3)}$.
\end{lemma}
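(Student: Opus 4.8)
The plan is to analyze the wall-crossing morphism $\pi_{\vec v,\vec u}$ locally around a point $z \in \R_{\vec u}$ parametrizing a sha $(X',D')$ with a single destabilized multiple point $p(I)$, and to show that the fiber of the \emph{restricted} morphism $\phi_{\vec v,\vec u}$ agrees scheme-theoretically with the fiber of $\pi_{\vec v,\vec u}$, which is already understood to be $\bP^{|I|-3}$ set-theoretically by the discussion in Subsection \ref{stablerp} and Proposition \ref{sing}. First I would recall from the stable replacement description that a generic point of $\pi_{\vec v,\vec u}^{-1}(z)$ parametrizes $\tilde X = Bl_{p(I)}X' \cup_{E} \bP^2$ with the lines $\{l_i : i\in I\}$ crossing into the new $\bP^2$; by the four conditions in the proof of Proposition \ref{sing}, the positive-dimensional fibers of $\pi_{\vec v,\vec u}$ are exactly the moduli of the configuration $(\bP^2, \sum_{i\in I} l_i + H_0)$ with $H_0$ the gluing line, subject to: no $|I|$-tuple point, intersection with $H_0$ fixed (inherited from $q$), and $H_0$ of weight $1$. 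This is precisely the setup of Lemma \ref{base} with $H_0$ in the role of $l_A$, so the reduced fiber is $\bP^{|I|-3}$.

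The substantive point — and the main obstacle — is the word \emph{scheme-theoretic}: a priori the fiber could be non-reduced, and one must also check that the whole fiber of $\pi_{\vec v,\vec u}$ lands inside $\R_{\vec v}$ (i.e.\ that taking closures commutes appropriately), so that $\phi_{\vec v,\vec u}^{-1}(z) = \pi_{\vec v,\vec u}^{-1}(z)$ as schemes. I expect to handle reducedness via a deformation-theoretic computation: at a point of the fiber parametrizing $\tilde X$, the relative tangent space of $\pi_{\vec v,\vec u}$ is identified with the deformations of the pair that fix the contracted sha $(X',D')$ and the gluing data, and this is the tangent space to the moduli of $(\bP^2, \sum_{i\in I} l_i + H_0)$ with the constraints above. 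One computes that this space has dimension exactly $|I|-3$ everywhere along the generic locus of the fiber — matching $\dim \bP^{|I|-3}$ — which forces the fiber to be generically reduced of the expected dimension; combined with the explicit $\bP^{|I|-3}$ structure from the Lemma \ref{base}-type argument (a GIT/blow-up presentation, as in Lemma \ref{lemma:universal}), one gets that the fiber is reduced and isomorphic to $\bP^{|I|-3}$. The inclusion $\pi_{\vec v,\vec u}^{-1}(z)\subset \R_{\vec v}$ follows because $z\in\R_{\vec u}$ means $(X',D')$ is a flat limit of shas in the open fiber of $\varphi_A$ over $q$, and the stable-replacement construction of Subsection \ref{stablerp} can be performed in families, so every sha in $\pi_{\vec v,\vec u}^{-1}(z)$ is a flat limit of shas over $q$ with the larger weights, hence lies in $\R_{\vec v}$.

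Concretely, the key steps in order: (i) fix $z\in\R_{\vec u}$ and invoke Subsection \ref{stablerp} and Proposition \ref{sing}(I)(III) to describe $\pi_{\vec v,\vec u}^{-1}(z)$ set-theoretically, noting all singularities remain isolated multiple points and no $(-1)$-curves get contracted; (ii) show $\pi_{\vec v,\vec u}^{-1}(z)\subseteq \R_{\vec v}$ by running the stable replacement in a one-parameter family of shas over $q$ degenerating to $(X',D')$, so that $\phi_{\vec v,\vec u}^{-1}(z) = \pi_{\vec v,\vec u}^{-1}(z)$; (iii) identify the reduced fiber with $\bP^{|I|-3}$ by repeating verbatim the argument of Lemma \ref{base} — the $|I|$ lines in the new $\bP^2$ component meet the fixed gluing line $H_0$ in fixed points, two of them (plus $H_0$) can be normalized to general position by the residual $\bC^*$, leaving $(\bA^1)^{|I|-2}\setminus\{0\}$ modulo $\bC^*$; (iv) confirm reducedness: compute the relative tangent space $T_{\pi_{\vec v,\vec u}}$ at a generic point of the fiber via deformations of $(\bP^2,\sum_{i\in I}l_i+H_0)$ fixing $H_0$ and the marked points on it, get dimension $|I|-3$, and conclude that the scheme-theoretic fiber is reduced and hence isomorphic to $\bP^{|I|-3}$. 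The deformation-theoretic input in step (iv) is the delicate part — it is exactly the kind of computation attributed to the discussion with S.\ Marcus in the acknowledgements — and everything else is a reprise of Lemma \ref{base} and the wall-crossing bookkeeping already set up.
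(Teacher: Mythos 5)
Your overall route is the same as the paper's: describe the fiber set-theoretically via the stable replacement, identify its reduced structure with the configuration space of $\{l_i : i\in I\}$ plus the gluing line via the argument of Lemma \ref{base}, and then use deformation theory to control the scheme structure. The genuine gap is in your step (iv). You compute the relative tangent space only \emph{at a generic point of the fiber}, conclude generic reducedness, and then assert that together with knowing the reduced fiber is $\bP^{|I|-3}$ this "gets that the fiber is reduced." That inference is invalid: a scheme can be generically reduced, have reduction equal to $\bP^{|I|-3}$, and still carry embedded or thickened structure along proper closed subsets — here, precisely along the boundary strata of the fiber where the lines $\{l_i : i\in I\}$ in the new $\bP^2$ component acquire their own multiple points $p(J)$, $J\subsetneq I$. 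To rule this out you must bound the tangent space (equivalently, control Artinian-valued deformations constant over $R_{\vec u}$) at \emph{every} closed point of the fiber, not just the generic one; you cannot instead invoke something like "generically reduced plus Cohen--Macaulay implies reduced," since no such property of the fiber (or of $R_{\vec v}$) is available at this stage — the smoothness of $R_{\vec v}$ is exactly what this lemma is feeding into, so assuming regularity properties of $R_{\vec v}$ would be circular.

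This pointwise control is exactly what the paper's proof supplies, and it is the part your proposal leaves as an assertion: for an arbitrary point of the fiber, parametrizing $(Y, D_Y+Z)$ with $Y = \mathrm{Bl}_{p(I)}X \cup_{Z} \bP^2$, one takes any deformation over an Artinian base whose image in $R_{\vec u}$ is constant and decomposes it into its restrictions to $(X',Z_1)$, $(\bP^2,Z_2)$ and the gluing curve $Z$; the first is determined by the fixed $(X,D)$, the second is rigid, and triviality on $Z$ follows from the fact that $Z$ and the new $\bP^2$ arise as projectivizations of the fixed normal bundles $N_{p(I)/X}\subset N_{\bar X}$ (with $\bar X = X\times A$), so a nontrivial deformation of the gluing would change this fixed inclusion. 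Only after this does the residual moduli reduce to the line configuration in the new $\bP^2$, i.e.\ to $\bP^{|I|-3}$, at every point, which (since the reduced fiber is smooth of dimension $|I|-3$) forces reducedness. Your steps (i)--(iii), including the family argument showing the whole replacement locus lies in the closure $R_{\vec v}$, are consistent with (indeed slightly more careful than) the paper on the set-theoretic side; but without the everywhere-valid deformation argument the scheme-theoretic claim — the actual content of the lemma — is not established.
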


The morphism $\phi_{\vec v, \vec u}$ has positive dimensional fibers over the loci parametrizing shas that become unstable with respect to the weights $\vec v$.  In our case, those are the shas with a 
isolated multiple point $p(I)$ and its fibers are described in  the proof of Proposition \ref{sing}. We now prove this scheme-theoretically.

\begin{proof}[Proof of Lemma \ref{fibers}]
Let $\phi_{\vec v, \vec u}: R_{\vec v} \to R_{\vec u}$ be the wall crossing morphism where $\vec v \geq \vec u$, let $A$ be the spectrum of an Artinian ring, and let $\psi: A \to R_{\vec v}$ be a deformation of $R_{\vec v}$. Furthermore, suppose that the total space of the composition $\phi_{\vec v, \vec u} \circ \psi: A \to R_{\vec u}$ is constant. We wish to show, by contradiction, 
that this forces the total space of $\psi: A \to R_{\vec v}$ to be the trivial deformation as well.\\

We may assume that the total space of $\phi_{\vec v, \vec u} \circ \psi: A \to R_{\vec u}$  is the trivial deformation of a pair $(X,D)$ where $(X,D)$ is stable with respect to the weights $\vec u$ but unstable with respect to $\vec v$. Indeed, if $(X,D)$ was stable with respect to both weights, then the morphism $\phi_{\vec v, \vec u}: R_{\vec v} \to R_{\vec u}$ is an isomorphism on this locus, and there is nothing to prove. \\

In particular, there exists $D' \subset D$ such that $D' = \cup_{i \in I} L_i$ with $\sum_{i \in I} u_i \leq 2$ and $\sum_{i \in I} v_i > 2$. Then the definition of $\phi_{\vec v, \vec u}: R_{\vec v} \to R_{\vec u}$ implies that the preimage of the sha $(X,D)$ is $(Y, D_Y + Z)$, where $Y = X' \cup \bP^2$ with $X' = \mathrm{Bl}_{p(I)}X$. Recall that $p(I)$ denotes the point we are required to blowup, as there are too many weighted lines passing through that point with respect to $\vec v$. \\

If we denote the gluing locus
 by $Z_1 \subset X'$ and $Z_2 \subset \bP^2$, then  it suffices to show that the deformation restricted to the three pairs, $(X', Z_1), (\bP^2,Z_2)$, and $Z = Z_1 \cong Z_2$ (the gluing locus $X' \cap \bP^2$) is trivial. Indeed, we first note that $(\bP^2, Z_2)$ is rigid. Furthermore, the deformation restricted to $(X', Z_1)$ is trivial, as the pair $(X', Z_1)$ is uniquely determined by $(X,D)$, which is assumed to be fixed. In particular, $(X', Z_1)$ is obtained as the blowup of a fixed variety at a fixed point. Therefore, it suffices to show that the deformation is trivial on the gluing locus, $Z$. To do so, we recall how our construction yields this line $Z$.\\

Recall that we are blowing up a point $p(I)$ inside a surface $X$ living inside a total space $\bar{X} := X \times A$. In particular, there is an inclusion of normal bundles $$N_X := N_{p(I)/ X} \subset N_{A/ \bar{X}} := N_{\bar{X}},$$ where $N_{X}$ is also the restriction of $N_{\bar{X}}$ on $X$. Indeed, we obtain $N_{A/\bar{X}}$ as we are blowing up a $p(I)$ inside each fiber, and an entire family of them, thus blowing up a section $A \subset \bar{X}$. The exceptional divisor of the blowup of $p(I)$ inside $X \subset \bar{X}$, is defined by the projectivization of these normal bundles -- indeed, the $\bP^2$ arises from the projectivization of $N_{\bar{X}}$, and the gluing locus $Z \cong \bP^1$ arises from the projectivization of $N_X$. \\

As $\phi_{\vec v, \vec u}  \circ \psi$ is assumed to be the trivial deformation, the normal bundles $N_X$ and $N_{\bar{X}}$, as well as the inclusion
$N_X \to N_{\bar X}$
 never change.  Now it suffices to note that any non-trivial deformation of $Z$, when composed with the wall crossing $\phi_{\vec v, \vec u}$,  would change the inclusion
$N_X \to N_{\bar X}$, thus contradicting the fact that $\phi_{\vec v, \vec u} \circ \psi$ is a trivial deformation.\\

Therefore, the moduli is determined by the moduli of the lines $\sum_{i \in I} L_i + Z$ inside $\bP^2$, such that $\sum_{i \in I} L_i = 2 + \epsilon$ and $L_{I} \cap Z$ is a fixed point of $M_{0,n}$, which is $\bP^{|I| - 3}$ by Lemma \ref{base}. 
\end{proof}

\section{Construction of $\R_{\vec w}$ via wonderful compactifications}\label{sec:wonderful}

As in the previous section, the results of this section do not depend on the $q$ used in the definition of 
$\R_{\vec w}(q)$,as long as it is a generic point of $M_{0, \vec w_0}$. We simplify our notation and just write  $\R_{\vec w}$.\\

Recall in Notation \ref{notation} we showed that the equivalence class of the $n$ lines parametrized by  $[s_1: \ldots : s_{n-2}] \in \R_{ \vec w_0}$  is induced by the lines
\begin{align*}
l_i:=(x_1-a_ix_2+s_ix_0=0), & & l_{n-2}:=(s_{n-2}x_0+x_1=0),
&& l_{n-1}:=(x_2=0),
\\
l_A:=(x_0=0),
 && l_n:=(x_1-x_2=0).
& &
\end{align*}
Therefore, the point $[1:0: \ldots: 0] \in  \R_{\vec w_0}$ parametrizes a pair with an $(n-1)$-tuple point at $[1:0:0] \in \mathbb{P}^2$ induced by the intersection of the lines 
$
l_2, \ldots, l_n.
$
Similarly, the hyperplane $(s_1=0) \subset \R_{\vec w_0}$ parametrizes a pair with a triple point at $[1:0:0]$. \\

We now show that this behavior holds in general.

\begin{lemma} \label{bij}
For every $I \subset \{1, \ldots, n\}$,  there  is a linear subspace  $\mathbb{P}^{(n-|I|-1)} \cong H(I) \subset \R_{\vec w_0}$ 
that generically parametrizes  a configuration with an $|I|$-tuple point $p(I)$ given by the intersection of the lines $\{ l_{i}  \; | \; i \in I \}$.  
\end{lemma}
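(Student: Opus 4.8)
The strategy is to exhibit $H(I)$ explicitly in the coordinates of Notation \ref{notation}, following the model already worked out in the paragraph preceding the statement for the case $I = \{2, \ldots, n\}$ and $I = \{1\} \cup \{3, \ldots, n\}$-type sets. Recall that a point $[s_1 : \cdots : s_{n-2}] \in \R_{\vec w_0} \cong \bP^{n-3}$ parametrizes the arrangement with $l_i = (x_1 - a_i x_2 + s_i x_0 = 0)$ for $1 \leq i \leq n-3$, together with $l_{n-2} = (s_{n-2}x_0 + x_1 = 0)$, $l_{n-1} = (x_2 = 0)$, $l_n = (x_1 - x_2 = 0)$, and $l_A = (x_0 = 0)$. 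The key observation is that the lines $l_i$, being members of pencils with fixed intersection $l_i \cap l_A$, are concurrent at a common point away from $l_A$ precisely when their defining equations satisfy a linear dependence relation, and this relation is linear in the $s_i$. So I would first write down, for a given $I$, the linear conditions on $[s_1 : \cdots : s_{n-2}]$ that force the lines $\{l_i \mid i \in I\}$ through a common point, check that these conditions cut out a linear subspace of the expected codimension $|I| - 2$ (hence dimension $n - 3 - (|I| - 2) = n - |I| - 1$), and verify that a generic point of that subspace has \emph{exactly} an $|I|$-tuple point there and no further degeneracy.

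\textbf{Key steps.} First, normalize: since $\R_{\vec w_0}$ is defined up to the $\bC^*$-action and the choice of $q$ is generic, I can assume $l_{n-2}, l_{n-1}, l_n, l_A$ are in the fixed position of Notation \ref{notation}, and the $a_i$ are generic. Second, handle the case when $I$ does not contain all of the ``last'' indices separately from the case when it does — the cleanest route is to split on whether $\{n-1, n\} \subset I$ or not. If $\{l_i \mid i \in I\}$ with $i \leq n-3$ are to pass through a common point $p = [p_0 : p_1 : p_2]$, then $p_1 - a_i p_2 + s_i p_0 = 0$ for all such $i$; solving, $s_i = -(p_1 - a_i p_2)/p_0$ is an affine-linear function of the single ratio $p_2/p_1$ (after scaling $p_0$), so the locus of admissible $[s_i]_{i \in I}$ is a line in the $s$-coordinates indexed by $I$, i.e. $|I| - 2$ independent linear conditions if $|I|$ of the lines are among $l_1, \ldots, l_{n-3}$; the conditions coming from $l_{n-2}, l_{n-1}, l_n$ lying through $p$ are likewise linear in $(s, p)$ and I would just track them case by case. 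Third, count: eliminating the parameters of $p$ leaves a linear subspace $H(I) \subseteq \bP^{n-3}$; a dimension count (the incidence variety $\{([s], p) : l_i \ni p \ \forall i \in I\}$ has dimension $(n-3) - (|I| - 2) + $ [dimension of choices of $p$ consistent with all but enough constraints], carefully bookkept) gives $\dim H(I) = n - |I| - 1$. Fourth, genericity: at a generic point of $H(I)$, show no line $l_j$ with $j \notin I$ passes through $p(I)$ (this is an open condition, nonempty because the $a_j$ are generic and one can perturb within $H(I)$), and that the lines in $I$ do not all coincide (ruled out by $\vec w_0$ admissibility, Lemma \ref{base}(1), once $|I| \leq n-1$; for $|I| = n$ there is no such $H(I)$ to worry about since $\bP^{n-|I|-1} = \bP^{-1} = \emptyset$, consistent with Lemma \ref{base}(1)). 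Finally, the isomorphism $H(I) \cong \bP^{n-|I|-1}$ follows since a linear subspace of projective space of dimension $d$ is a $\bP^d$.

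\textbf{Main obstacle.} The genuinely delicate point is the clean dimension count together with verifying that the linear conditions imposed by demanding concurrency of $\{l_i \mid i \in I\}$ are \emph{independent} — i.e. that imposing an $|I|$-tuple point really costs $|I| - 2$ conditions and not fewer, which relies on the $a_i$ (equivalently the point $q \in M_{0,\vec w_0}$) being generic so that no ``accidental'' concurrences reduce the codimension. Keeping the bookkeeping straight when $I$ contains some of the distinguished indices $n-2, n-1, n$ (whose defining equations have a slightly different shape) is the other place where care is needed, though conceptually it is the same linear-algebra computation. I expect the argument to mirror, essentially verbatim, the paragraph already given for $I = \{2, \ldots, n\}$, and the proof in the paper is likely short, reducing everything to the explicit coordinates of Notation \ref{notation} and a rank computation on the coefficient matrix of the linear system in the $s_i$.
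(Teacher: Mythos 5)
Your proposal is correct, and in substance it is the same elementary computation as the paper's: both reduce the lemma to explicit linear conditions on the coordinates $[s_1:\cdots:s_{n-2}]$ of Notation \ref{notation} together with the codimension count $(n-3)-(|I|-2)=n-|I|-1$. The only real difference is the mechanism producing the linear equations. You eliminate the concurrency point $p$ from the system $\{p\in l_i\}_{i\in I}$, which is what forces the case bookkeeping you flag (the indices $n-2,n-1,n$ have differently shaped equations) and a separate check that the resulting conditions are independent. The paper instead dualizes: the lines $l_i$ correspond to points $y_i=[s_i:1:-a_i]$ in $\hat\bP^2$ (fixed points for the special indices), concurrency of $\{l_i \mid i\in I\}$ is collinearity of the dual points, and collinearity is the vanishing of the $3\times 3$ determinants $\det[y_j,y_k,y_l]$, which are visibly linear in the $s_i$; this treats all indices uniformly and makes the codimension count immediate. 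Note also that independence of the conditions needs only that the $a_i$ are pairwise distinct (the relevant span is that of $(1,\ldots,1)$ and $(a_i)_{i\in I}$), not any further genericity of $q$, so the "main obstacle" you identify is milder than you fear. Your final verification that a generic point of $H(I)$ carries exactly an $|I|$-tuple point (no $l_j$ with $j\notin I$ through $p(I)$) is left implicit in the paper and is correctly disposed of by observing that $H(I\cup\{j\})$ is a strictly smaller linear subspace of $H(I)$.
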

\begin{proof} A set of lines  $\{ l_i \; | \; i \in I \}$ has an $|I|$-multiple point if and only their dual points $\{ y_{i} \; | \;  i \in I \}$ are collinear. Taking any subset of three of these points, the associated matrix $[y_j, y_k, y_l]$ has determinant equal to zero.  In particular, these equations are linear on the variables $s_i$ and define $H(I)$. Finally, the dimension count is $(n-3)- (|I|-2)=n-|I|-1$.\end{proof}

\begin{example}We use the equation of the lines as given in Notation \ref{notation}. For example associated to the points $y_1=[s_1,1,-a_1]$, $y_2=[s_1,1,-a_2]$, and  $y_3=[s_1,1,-a_3]$, we have the equation
$$
s_1(a_2-a_3)-s_2(a_1-a_3)+s_3(a_1-a_2)=0.
$$
\end{example}

The sets $H(I)$ will generate the centers of the morphism
$\R_{\vec 1^n} \to \R_{\vec w_0}$. These morphisms are induced by changing the weights, and the description of these linear subspaces will be crucial for the next subsection.

\subsection{Wonderful compactifications}\label{sec:wonderful} In what follows, we review the pertinent definitions of \emph{wonderful compactifications} following \cite{li}. We note that the theory of wonderful compactifications originiated in \cite{de1995wonderful}.

\begin{definition}\label{clean}
An \textbf{arrangement} of subvarieties of a nonsingular variety $Y$  is a finite set $\mathcal{S}=\{S_i\}$ of nonsingular closed  subvarieties $S_i \subset Y$ closed under scheme-theoretic intersection.    Given  $\dim(Y)=(n-3)$,
we say that a finite collection of $k$ nonsingular subvarieties $S_1, ..., S_k$ intersect \textbf{transversely}, if  either $k=1$ or for any $y \in Y$ the following conditions holds
(see \cite[Sec 5.1.2]{li})
\begin{itemize}
\item[(a)] there exist a system of local parameters $x_1, ..., x_{(n-3)}$ on $Y$ at $y$ that are regular on an affine neighborhood $U$ of $y$ such that $y$ is defined by the maximal ideal $(x_1, ..., x_{(n-3)})$ as well as 
\item[(b)] integers $0 = r_0 \leq r_1 \leq ... \leq r_k \leq (n-3)$ such that the subvariety $S_i$ is defined by the ideal 
$$
(x_{r_{i-1}+1}, x_{r_{i-1}+2}, ..., x_{r_i})
$$ for all $1 \leq i \leq k$
\end{itemize}
If $r_{i-1} = r_i$ then the ideal is assumed to be the ideal containing units, which means geometrically that the restriction of $S_i$ to $U$ is empty.
\end{definition}
\begin{definition} \label{factors}
A subset $\mathcal G \subset \mathcal S$ is called a \textbf{building set} of $\mathcal S $ if for all 
$S_k \in\mathcal S$, the minimal elements of $\mathcal G$ containing $S_k$ intersect transversally and their intersection is equal to $S_k$ (by convention, the condition is satisfied if $S_k \in \mathcal G$). These minimal elements are called the \textbf{$\mathcal G$-factors} of $S_k$.  Let $\mathcal G$ be a building set and set $Y^o:=Y \setminus \bigcup _{S_k \in \mathcal G} S_k$. The  closure of the image of the natural locally closed embedding 
 (\cite[Def 1.1]{li})
\begin{center}
$Y^o \hookrightarrow  \prod \limits_ {S_k \in\mathcal G} Bl_{S_k} Y$
\end {center}
\vspace{0.1in}
is called the \textbf{wonderful compactification} of $Y$ with respect to $\mathcal G$. % and is denoted by $Y_\mathcal G$.  
\end{definition}

\begin{theorem} \cite[Theorem 1.3]{li}\label{thmLi}
Let $\mathcal G$ be a building set and let $Bl_{\mathcal G} Y$ be the wonderfuld compactification of $Y$ with respect to $\mathcal G$. Then $Bl_{\mathcal G}Y$ is smooth  with normal crossing boundary,  and that for each $S_k \in \mathcal{G}$ there is a nonsingular divisor $D_{S_k }\subset Y_{\mathcal{G}}$. Moreover,  the union of the divisors is $Y_{\mathcal{G}}\setminus Y^o$, and any set of these divisors, with nonempty interesction, meet transversally.
\end{theorem}
\begin{example}\label{bR5}
A building set $\mathcal{H}$ in $R_{\vec w_0}$  is given by $5$ points $H(J)$ with $|J|=4$ and   10 lines $H(I)$ with $|I|=3$ parametrizing configurations with either a quadruple or a triple point respectively.  The arrangement $\mathcal{S}$ is the set of all possible intersections among them.  The 10 lines, which are not in general position,  intersect along $20$ points given by:
\begin{enumerate}
\item The point $H(I) \cap H(J)$ with $ |I  \cap J| =2 $ parametrizes the quadruple point $p(I \cup J)$.
\item The point $H(I) \cap H(J)$ with $ |I  \cap J| =1 $ parametrizes a configuration with two 
triple points  associated to $I$ and $J$.  There are 15 of these points.
\end{enumerate}
\end{example}
The above example illustrates the general behavior. 
\begin{lemma} \label{build}
Let $\mathcal S_{\vec w}$ be the set of all possible intersections of collections of subvarieties from
$$
\mathcal{H}_{\vec w }=\{ H(J)
\ | \
\sum_{i \in J} w_i > 2,   \ |J| \subset \{1, \ldots, n \}
\}.
$$
Then, $\mathcal S_{\vec w}$ is  an arrangement and $\mathcal H_{\vec w}$ is a building set.
\end{lemma}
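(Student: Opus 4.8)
The plan is to verify the two defining conditions of Definitions \ref{clean} and \ref{factors} directly, using the explicit linear description of the subvarieties $H(J) \subset R_{\vec w_0} \cong \bP^{n-3}$ obtained in Lemma \ref{bij}. Recall that $H(J)$ is cut out by the vanishing of the $3\times 3$ determinants $[y_j, y_k, y_l]$ for $j,k,l \in J$, where $y_i = [s_i : 1 : -a_i]$; expanding along the first column, each such determinant is the \emph{linear} form $s_j(a_k - a_l) - s_k(a_j - a_l) + s_l(a_j - a_k)$ in the coordinates $s_i$. So every $H(J)$ is a linear subspace, and the whole arrangement $\mathcal S_{\vec w}$ consists of linear subspaces of $\bP^{n-3}$. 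In particular $\mathcal S_{\vec w}$ is automatically closed under scheme-theoretic intersection once we include all intersections, and each member is nonsingular; this gives that $\mathcal S_{\vec w}$ is an arrangement. The content is therefore in showing that $\mathcal H_{\vec w}$ is a building set.

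The key combinatorial observation I would isolate first is the intersection pattern: for index sets $J_1, J_2$ with $|J_\alpha| \geq 3$, one has $H(J_1) \cap H(J_2) = H(J_1 \cup J_2)$ provided $|J_1 \cap J_2| \geq 2$ (the points $y_i$, $i \in J_1$, are collinear and the points $y_i$, $i \in J_2$, are collinear, and they share two points, hence lie on a common line), whereas if $|J_1 \cap J_2| \leq 1$ the intersection is a ``generic'' linear subspace that is \emph{not} of the form $H(K)$ for any single $K$ — it parametrizes configurations with two independent collinearity conditions. More generally, the closure under intersection of $\mathcal H_{\vec w}$ consists of all linear spaces obtained by imposing a collection of collinearity conditions on disjoint-ish blocks, and I would record that the minimal members of $\mathcal H_{\vec w}$ containing a given $S \in \mathcal S_{\vec w}$ are exactly the $H(J)$ for $J$ ranging over the ``maximal blocks'' of the collinearity pattern defining $S$, and that these $H(J)$ intersect in exactly $S$. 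The building-set condition then amounts to checking that these finitely many $H(J_1), \dots, H(J_r)$ meet \emph{transversely} at $S$ in the sense of Definition \ref{clean}, i.e. that the sum of their conormal spaces along $S$ is direct of the expected rank — equivalently, that the linear forms defining the distinct $H(J_\alpha)$ are ``independent in blocks'' with respect to a suitable splitting of coordinates at each point of $S$.

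For the transversality I would argue as follows. Since the $H(J_\alpha)$ correspond to collinearity conditions on pairwise-``almost disjoint'' index blocks (any two of the maximal blocks share at most one index, by maximality), the defining linear equations involve essentially disjoint clusters of the variables $s_i$, so the codimensions add: $\mathrm{codim}(H(J_1) \cap \dots \cap H(J_r)) = \sum_\alpha \mathrm{codim}\, H(J_\alpha) = \sum_\alpha (|J_\alpha| - 2)$, matching the dimension count $\dim H(J) = n - |J| - 1$ from Lemma \ref{bij}. Having equality of expected and actual codimension for a family of linear subspaces of $\bP^{n-3}$ is precisely the linear-algebra incarnation of the transversality condition (a)-(b) of Definition \ref{clean}: one picks local affine coordinates at a point $y \in S$ adapted to the flag $S \subset H(J_{i_1}) \cap \dots \subset \dots$, and the fact that the conormals are independent lets one take the $x_{r_{i-1}+1}, \dots, x_{r_i}$ to be the linear forms cutting out $H(J_{i_\alpha})$. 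I would verify the codimension-additivity by exhibiting, for the block structure at hand, an explicit triangular/echelon form of the combined linear system — using that the forms for different blocks use disjoint variables except for the at-most-one shared index, which contributes at most a lower-order correction that does not drop the rank. Finally I would note that when $S = H(J) \in \mathcal H_{\vec w}$ itself, the condition is vacuous by the convention in Definition \ref{factors}, and that Example \ref{bR5} is exactly the $n=5$ instance of this analysis (5 points, 10 lines, the two types of pairwise intersections governed by $|I \cap J| = 2$ versus $|I \cap J| = 1$).

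The main obstacle I anticipate is the transversality/codimension-additivity step: one must handle the fact that the $H(J)$ are genuinely \emph{not} in general position (as emphasized right before Example \ref{bR5}), so the usual ``hyperplanes in general position'' shortcut is unavailable, and one must track carefully how overlapping index blocks (sharing a single index) interact. The saving grace is that sharing a \emph{single} index is harmless for linear independence of the collinearity forms — a rank computation confirms this — whereas sharing \emph{two} indices would force $H(J_1) \cap H(J_2) = H(J_1 \cup J_2) \in \mathcal H_{\vec w}$, so such pairs never both appear among the \emph{minimal} members containing a given $S$. Making this dichotomy precise, and hence reducing the building-set axiom to the clean block-disjoint case, is the technical heart of the argument.
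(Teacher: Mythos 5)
Your proposal follows essentially the same route as the paper's proof: you reduce to the unique decomposition of $S_k$ into maximal blocks $H(J_1)\cap\dots\cap H(J_s)$ with $|J_\alpha\cap J_\beta|\le 1$ (using that $|J_1\cap J_2|\ge 2$ forces $H(J_1)\cap H(J_2)$ to be empty or equal to $H(J_1\cup J_2)\in\mathcal H_{\vec w}$), and then verify Definition \ref{clean} by showing the codimensions $|J_\alpha|-2$ add and choosing local parameters adapted to the blocks, exactly as the paper does with its explicit $r_m=\sum_{i\le m}(|J_i|-2)$. The codimension-additivity step you single out as the technical heart is treated in the paper at the same level of brevity (it is inferred from non-emptiness of $S_k$ and the generator count), so your plan is a faithful match rather than a genuinely different argument.
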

\begin{proof}
 $\mathcal S_{\vec w}$ is an arragement by Definition \ref{clean}. 
For the last statement, let $S_k$ be an arbitrary element of $\mathcal S_{\vec w}$.  By definition,  $S_k$ is  an arbitrary nonempty intersection $S_k:=H(I_1)\cap \dots \cap H(I_m) $.  We need to prove two conditions: (I) that the minimal elements of $\mathcal{H}_{\vec w}$ that contain $S_k$ intersect transversally, and (II) that their intersection is equal to $S_k$.   

For (I), we first observe that  any $S_k$  can be written uniquely as an intersection of the form $H(J_1) \cap\dots \cap H(J_s)$, where  
$|J_i \cap J_k| \leq 1$ and each of the $J_i$ is a union of $I_j$.  Indeed, if $|I_1 \cap I_2|  \geq 2$
and $I_1 \cap I_2 \neq \{ 1, \ldots, n \}$, then their intersection must parametrize an $(|I_1|+|I_2|)$-tuple point. 
This implies that $H(I_1) \cap H(I_2) $ is either the empty set or $H(I_1 \cup I_2)\in \mathcal{H}_{\vec w}$.  In the latter case, we  can dismiss 
$H(I_1)$ and $H(I_2)$ while keeping $H(I_1) \cap H(I_2)$.  Iterating this process, we can find all the minimal elements $J_i \in \mathcal H_{\vec w}$ containing $S_k$.

Part (I) now reduces to showing that the intersection of the linear subspaces $\mathbb P^{(n-|J_i|-1)}$, $1 \leq i \leq s$, along $S_k$ is transversal.
By Definition \ref{clean}, it is enough to exhibit numbers 
$0=r_0 \leq r_1 \leq \ldots \leq r_s \leq (n-3)$ that satisfy the conditions of the aforementioned definition.
We can take 
\begin{align*}
&
r_0 :=0
,& 
r_m := \sum_{i=1}^m (|J_1|-2) 
& &
\text{ with } \qquad{} 1 \leq m \leq s.
&
\end{align*}
Indeed,  $r_s \leq (n-3)$ because 
$$
0
\leq
\dim(S_k)
=
(n-3)-  \sum_{i=1}^s (|J_1|-2) 
$$
since $S_k$ is non-empty.  We can take the linear subspace
$
H(J_m)=\mathbb P^{n-|J_m|-1}
$
to be defined by the ideal
$$
\left(
x_{(r_{m-1}+1)}, \ldots, x_{r_m}
\right),
$$
because counting its number of generators, we obtain
$$
r_m-(r_{m-1}+1)+1
=
\left( \sum_{i=1}^{i=m} (|J_1|-2) \right)
-
\left( \sum_{i=1}^{i=m-1} (|J_1|-2) 
\right)
=
(|J_m|-2)
$$
which is the  codimension of $H(J_m)$.

Finally, as we are intersecting linear subspaces in projective space condition (II) follows by the definition of the $H(J_i)$.
\end{proof}

\begin{definition}
Let $\vec w \in \mathcal D^R_n$ be an admissible weight and let $\mathcal H_{\vec w}$ be as in Lemma \ref{build}. Then the \textbf{wonderful compactification} of 
$R_{\vec w_0} \cong \mathbb P^{n-3}$ with respect to $\mathcal H_{\vec w}$ is denoted by 
$
Bl_{\vec w}R_{\vec w_0}
$.
\end{definition}

\begin{lemma}\label{lem:universaltotal}
Let $\vec w$ be an admissible weight vector in $\mathcal D^R_{n}$.  There exists a smooth variety  $\mathcal U_{\vec w}$, a flat proper morphism
$\phi_{\vec w}$, 
$$
\xymatrix{
\mathcal U_{\vec w} \ar[d]_{\phi_{\vec w}} \ar[r]^{\tau}& \mathcal U_{\vec w_0} \ar[d]^{\phi_{\vec w_0}} 
\\
Bl_{\vec  w } R_{\vec w_0} \ar[r]& R_{\vec w_0} 
}
$$
and $n$ hypersurfaces $\sigma_i(\vec w) \subset \mathcal U_{\vec w}$ such that for every $\vec s \in  Bl_{\vec w} R_{\vec w_0}$ the fiber
$\phi_{\vec w}^{-1}(\vec s)$  and the divisors 
\begin{align*}
\phi_{\vec w}^{-1}(\vec s) \cap  \sigma_i(\vec w)
& &
l_A:=\phi_{\vec w}^{-1}(\vec s) \cap \tau^{-1}\left( E_{\vec w_0} \right)
\end{align*}
define a stable sha of weight $\vec w$ 
($E_{\vec w_0}$ is defined in Lemma \ref{lemma:universal}).
\end{lemma}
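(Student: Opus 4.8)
The plan is to build $\mathcal U_{\vec w}$ by pulling back the universal family $\mathcal U_{\vec w_0} \to R_{\vec w_0}$ constructed in Lemma \ref{lemma:universal} along the blow-down $Bl_{\vec w}R_{\vec w_0} \to R_{\vec w_0}$, and then performing a sequence of explicit blow-ups on the total space that mirror the wall-crossings encoded in the building set $\mathcal H_{\vec w}$. Concretely, I would proceed by induction on the minimal length $m$ of a chain of adjacent admissible weights $\vec w_0 = \vec\gamma_1 \le \cdots \le \vec\gamma_m = \vec w$, exactly as in the proof of Proposition \ref{sing}. The base case $m=1$ is Lemma \ref{lemma:universal} itself: there $\mathcal U_{\vec w_0} \to R_{\vec w_0}$ is flat and proper with all fibers $\cong \bP^2$, the $\sigma_i(\vec w_0) := \hat\sigma_i$ are the strict transforms of the hyperplanes $H_i$, and $l_A$ is cut out by the exceptional divisor $E_{\vec w_0}$.

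For the inductive step, suppose $\mathcal U_{\vec v} \to Bl_{\vec v}R_{\vec w_0}$ with its divisors $\sigma_i(\vec v)$ has been constructed, where $\vec v = \vec\gamma_{m-1}$ and $\vec w = \vec\gamma_m$ are adjacent, separated by the wall $W(I)$. By Lemma \ref{bij}, crossing $W(I)$ corresponds to adding the center $H(I) \cong \bP^{n-|I|-1}$ to the building set; by Lemma \ref{build} and Theorem \ref{thmLi}, $Bl_{\vec w}R_{\vec w_0}$ is obtained from $Bl_{\vec v}R_{\vec w_0}$ by blowing up (the strict transform of) $H(I)$. The key point is that over a generic point of $H(I)$, the fiber of $\phi_{\vec v}$ is a sha with an isolated $|I|$-tuple point $p(I)$ supported away from $l_A$ (Proposition \ref{sing}(I), Remark \ref{rmk:singularities}), and by the stable-replacement recipe of Subsection \ref{stablerp} the correct modification of the family is: blow up the locus $W \subset \mathcal U_{\vec v}$ which is the section of $p(I)$ over $H(I)$ (more precisely over its strict transform, so that the construction is compatible with earlier blow-ups), and glue in a $\bP^2$-bundle along the exceptional divisor. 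Equivalently, one blows up $\mathcal U_{\vec v}$ along $W$ and along the preimage of $H(I)$ in a way that produces, fiberwise, the surface $\tilde X = Bl_{p(I)}X \cup_{E_{p(I)}} \bP^2$. One then takes $\sigma_i(\vec w)$ to be the strict transforms of the $\sigma_i(\vec v)$ under this modification; the lines indexed by $I$ acquire their new branch in the attached $\bP^2$ automatically, since scheme-theoretically they passed through $W$.

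Flatness and properness of $\phi_{\vec w}$ follow because blowing up a smooth center inside a flat proper family, together with the gluing construction, preserves these properties (the fibers over $Bl_{\vec w}R_{\vec w_0} \setminus E_{H(I)}$ are unchanged, and over $E_{H(I)}$ the new fiber is the explicitly described reducible surface, which is a flat limit by the wall-crossing picture of Theorem \ref{thm:wallchamber}(2)). Smoothness of $\mathcal U_{\vec w}$ follows from smoothness of $\mathcal U_{\vec v}$ together with smoothness of the blow-up center $W$: here one uses that $W$ is itself a projective bundle over the smooth variety $H(I)$ (its strict transform), the projectivization of the relevant normal bundle as in the deformation-theoretic analysis of Lemma \ref{fibers}. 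That the resulting pair defines a \emph{stable} sha of weight $\vec w$ is exactly the content of the analysis in the proof of Proposition \ref{sing}: after assigning weights $\vec w$ to the $\sigma_i(\vec w)$ and weight $1$ to $l_A$, the only singularities are multiple points $p(J)$ with $\sum_{j \in J} w_j \le 2$, no line overlaps $l_A$, and log canonicity holds. The main obstacle is the inductive step's bookkeeping: one must check that the blow-up center $W$ at stage $m$ is a smooth subvariety of $\mathcal U_{\vec v}$ meeting the previously-introduced exceptional loci transversally (so that iterated blow-ups stay smooth and the fiberwise picture is the expected iterated stable replacement), and that the order in which walls are crossed does not matter — both of which reduce, via Lemma \ref{build}, to the transversality statement for the building set $\mathcal H_{\vec w}$ in $R_{\vec w_0}$ lifted to the total space. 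I expect this compatibility/transversality verification to be where the real work lies; the flatness, properness, and smoothness claims are then formal consequences.
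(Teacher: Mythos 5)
Your plan is essentially the paper's proof: induction along a minimal chain of adjacent weights with Lemma \ref{lemma:universal} as the base case, and at each wall $W(I)$ a modification of the total space supported over (the strict transform of) $H(I)$ whose fibers become $Bl_{p(I)}X \cup_{E} \bP^2$, with $\sigma_i(\vec w)$ the strict transforms of the old sections. The paper discharges the transversality/compatibility bookkeeping you flag by realizing your two modifications as (i) the fiber product with $Bl_{\vec w}R_{\vec w_0}$, which by flatness is the blow-up of $\mathcal U_{\vec \gamma_m}$ along $\phi_{\vec\gamma_m}^{-1}(\overline H(I))$, followed by (ii) the blow-up along $\eta^{-1}(S(I))$, whose exceptional divisor is automatically the required $\bP^2$-fibration, so no separate gluing or order-of-blow-up argument is needed.
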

\begin{proof}
Let $\vec w \in \mathcal D^R_n$ be an admissible weight and consider a 
sequence of weights $\vec \gamma_1, \ldots,\vec \gamma_{m+1}$ such that 
$\vec \gamma_1:=\vec w_0$, $\vec \gamma_{m+1} :=\vec w$, 
the weights $\vec \gamma_{i} \leq \vec  \gamma_{i+1}$ 
are adjacent to each other (see Definition \ref{def:adj}),
and $m+1$ is the minimal length of such sequences. We prove  our Lemma by induction.  The base case is proven in Lemma \ref{lemma:universal}.

Next, we describe the inductive step.  We suppose that the statement holds for $\gamma_m$. In particular, there exists a smooth variety $\calU_{\vec{\gamma_{m}}}$ with a flat proper morphism $\phi_{\vec{\gamma_{m}}}: \calU_{\vec{\gamma_{m}}} \to  Bl_{\vec{\gamma_{m}}} R_{\vec w_0}$, and $n$ hypersurfaces $\sigma_i(\vec{\gamma_{m}}) \subset \calU_{\vec{\gamma_{m}}}$ such that for every $\vec{s} \in Bl_{\vec{\gamma_{m}}} R_{\vec w_0}$ the fiber $\phi^{-1}_{\vec{\gamma_{m}}}(\vec{s})$ and the divisors $\phi^{-1}_{\vec{\gamma_{m}}}(\vec{s}) \cap \sigma_i(\vec{\gamma_{m}})$  and $l_A := \phi^{-1}_{\vec{\gamma_{m}}}(\vec{s}) \cap \tau^{-1}(E_{\vec{w_0}})$ define a stable sha of weight $\gamma_m$.

Let $W(I)$ be the wall separating $\vec \gamma_m$ and $\vec \gamma_{m+1} =\vec w$, we denote the singularity destabilized by this wall crossing by $p(I)$.

Let $\overline{H}(I)$ be the closure of the locus in $Bl_{\vec \gamma_m}R_{\vec w_0}$ parametrizing all shas $(X,D)$ with a multiple point $p(I)$, and let $S(I) \subset \mathcal U_{\gamma_m}$ be the locus supporting $p(I)$. We will show that the following diagram 
\begin{align*}
\xymatrix {
\mathcal U_{\vec w}:=
Bl_{\eta^{-1}(S(I))}
\left(
Bl_{\vec w }R_{\vec w_0}
 \times_{ \left( Bl_{\vec \gamma_{m} }R_{\vec w_0} \right) } \mathcal{U}_{\vec \gamma_m} 
\right)
\ar[r] \ar[rd]_{\phi_{\vec w}}& 
Bl_{\vec w }R_{\vec w_0}
 \times_{ \left( Bl_{\vec \gamma_{m} }R_{\vec w_0} \right) } \mathcal{U}_{\vec \gamma_m} 
 \ar[d]^{\tilde \pi} 
\ar[r]^{ \qquad{} \qquad{} \eta}
& 
\mathcal{U}_{\vec \gamma_m} 
\ar[d]^{\gamma_{\vec \gamma_m}}
\\
& 
Bl_{\vec w }R_{\vec w_0}
\ar[r]^{\rho} 
& 
Bl_{\vec \gamma_m }R_{\vec w_0}
}
\end{align*}
yields our family $\phi_{\vec w}: \mathcal U_{\vec w} \to Bl_{\vec w} R_{\vec w_0} $.

   Notice that $ S(I) \cong \overline H(I)$ 
because the projection $S(I) \to \overline H(I)$ is finite, generically one-to-one, and $\overline H(I)$  is the smooth strict transform of 
$H(I) \subset R_{\vec w_0}$ in $ Bl_{\vec \gamma_m }R_{\vec w_0} $. Therefore, the isomorphism $S(I) \cong \overline H(I)$ follows by Zariski's main theorem.

By definition of the wonderful blow up, we have that 
\begin{align*}
Bl_{\vec w }R_{\vec w_0}
=
Bl_{\overline H(I)} \left( Bl_{\vec \gamma_m }R_{ \vec w_0} \right).
\end{align*}
On another hand, by the inductive hypothesis, 
$\phi_{\vec \gamma_m}: \mathcal U_{\vec \gamma_m} \to Bl_{\vec \gamma_m} R_{\vec w_0} $ is flat. Since blowing up commutes with flat base change, we obtain
\begin{align}\label{eq:center}
Bl_{\vec w }R_{\vec w_0}
 \times_{ \left( Bl_{\vec \gamma_{m} }R_{\vec w_0} \right) } \mathcal{U}_{\vec \gamma_m} 
 \cong 
Bl_{\phi_{\vec \gamma_m}^{-1}(\overline{H}(I)) } \mathcal U_{\vec \gamma_m} 
\end{align}
which implies
\begin{align*}
Bl_{\eta^{-1}(S(I))}
\left(
Bl_{\vec w }R_{\vec w_0}
 \times_{ \left( Bl_{\vec \gamma_{m} }R_{\vec w_0} \right) } \mathcal{U}_{\vec \gamma_m} 
\right)
=
Bl_{\eta^{-1}(S(I))}
\left(
Bl_{\phi_{\vec \gamma_m}^{-1}(\overline{H}(I)) } \mathcal U_{\vec \gamma_m} 
\right).
\end{align*}

Let $E_{\rho}$ and $E_{\eta}$ be the exceptional divisors of $\rho$ and $\eta$ respectively.  Next, we describe the fiber 
$\tilde \pi^{-1}(z)$
for $z \in E_{\rho}$.  Given $y \in \overline H(I)$, the fiber 
$\phi_{\vec \gamma_m}^{-1}(y)$ is a surface $X$.  

We find,  by dimension counting,  that 
$\rho^{-1}(y) \cong \mathbb P^{(|I|-3)}$, and $\eta^{-1}(\phi_{\vec \gamma_m}^{-1}(y)) $ is a $\mathbb P^{(|I|-3)}$-fibration over $X$. Due to the fiber product construction, there is a morphism $ \eta^{-1}(\phi_{\vec \gamma_m}^{-1}(z)) \to \rho^{-1}(z) $. So $ \eta^{-1}(\phi_{\vec \gamma_m}^{-1}(z))$  is a fibration over $ \mathbb P^{(|I|-3)}$ with fibers isomorphic to $X$. 

Therefore, 
for all
$z \in E_{\rho}$ it holds that 
$\tilde \pi^{-1}(z) \cong X$, 
and
the strict transform  
$$
\{ \eta_*^{-1}(\sigma_i(\vec \gamma_m)) \; | \; i \in I \}
$$ 
of the sections $\{ \sigma_i(\vec \gamma_m) \; | \; i \in I \}$
induces a divisor  in  $\tilde \pi^{-1}(z) $   with an $(n-1)$ multiple point.  
Blowing up  $\eta^{-1}(S(I))$ generically separates those sections in 
 $\mathcal U_{\vec w}$,  because  the intersection of the hypersurfaces  $\{ \eta_*^{-1}(\sigma_i(\vec \gamma_m)) \; | \; i \in I \}$ is locally an intersection of $|I|$ hyperplanes in affine space.
Indeed, recall 
our sections are the strict transforms of $\sigma_i \subset R_{\vec w_0}$ and that  
$\mathcal U_{\vec w_0} \cong Bl_{Z} \mathbb P^{n-1}$ with $Z \cong \mathbb P^1$ and  $Z \cap \sigma_i = \emptyset$.

Finally, we describe the fibers of $\phi_{\vec w}$. The locus $\eta^{-1}(q_I) \cong \mathbb P^{|I|-3}$ intersects $\tilde \pi( z) \cong X$ at the point $x$ supporting the multiple point $q(I)$.  
The locus $S(I) \subset \mathcal U_{\vec \gamma_m}$ has dimension $(n-|I|-1)$. 
Therefore, 
$\dim(\eta^{-1}(S(I)) =(n-4)$ which implies
the divisor of the blow up
$$
\mathcal U_{\vec w} \to 
Bl_{\phi_{\vec \gamma_m}^{-1}(\overline{H}(I))} \left( \mathcal U_{\vec \gamma_m}  \right)
$$
 is a $\mathbb P^2$-fibration over $\eta^{-1}(S(I))$. So,  
$\phi_{\vec w}^{-1}(z)$ is equal to    
\begin{align}\label{eq:sur}
\mathbb P^2 \bigcup_{L=E} Bl_x \left( \tilde{\pi}^{-1}(z) \right) 
\cong
\mathbb P^2 \bigcup_{L=E} Bl_x X.
\end{align}
where $E \subset Bl_x X $ is the exceptional divisor obtained by blowing up $x$ and $L$ is a line in $\mathbb P^2$. 

The 
$\mathbb P^2$ component is a fiber of $ \mathcal U_{\vec w} \to Bl_{\phi_{\vec \gamma_m}^{-1}(p_I)} \mathcal U_{\vec \gamma_m}$,  so the strict transforms of the sections $\{ \sigma_i(\vec \gamma_m) | \; i \in I \}$ define a configuration of lines on it. Those lines do not overlap in a $|I|$-tuple point, because that is the multiple point we just separated. Therefore, the resultant pair defined by the surface in \ref{eq:sur}, and its intersection with the strict transform 
$\sigma_i(\vec w)$
of the hypersurfaces $\sigma_i(\vec \gamma_m)$ in $\mathcal U_{\vec w}$ defines a stable sha with respect to $\vec w$.
\end{proof}

In the following Lemma, we recall that  $n$ points in $\mathbb P^{n-3}$ are in  general position if there are no two of them supported in a point, no three of them contained on a line, no four of them contained in a plane, and so forth.

\begin{lemma}\label{order}
For $n \geq 5$, there  are $n$ points $q_1, \ldots, q_n$  in 
$R_{\vec w_0}$ in general position, a sequence of weights
$\vec w_k$ with $1 \leq k \leq (n-3)$, and morphisms of smooth varieties
\begin{align*}
\xymatrix{
Bl_{\vec w_{(n-3)}}R_{\vec w_0}
\ar[r]
&
\ldots \ar[r]
 & 
Bl_{ \vec w_k}R_{\vec w_0}
\ar[r]
&
\ldots \ar[r]
&
R_{\vec w_0}
}
\end{align*}
where 
\begin{itemize}
\item 
$Bl_{ \vec w_1}R_{\vec w_0}$
is the blow up of $R_{\vec w_0}$ along $q_1, \ldots, q_n$ in any order.
\item 
$Bl_{ \vec w_2}R_{\vec w_0} $ is 
the blow up of $Bl_{ \vec w_1}R_{\vec w_0} $ 
along the strict transform of lines spanned by all pairs of points 
$\{ q_i, q_j \}$, in any order
 $$ \vdots $$
\item 
$Bl_{ \vec w_{(n-3)}}R_{\vec w_0}$ is 
the blow up of 
$Bl_{ \vec w_{(n-4)}}R_{\vec w_0}$ along the 
strict transforms of the $(n-4)$-planes spanned by all $(n-3)$-tuples of the 
$q_i$, $i=1,\ldots,n$ in any order.
\end{itemize}
\end{lemma}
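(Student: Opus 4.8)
The plan is to prove Lemma \ref{order} by combining the combinatorial description of the building sets $\mathcal H_{\vec w}$ from Lemma \ref{build} with the iterated-blow-up construction of wonderful compactifications in Li's framework (Theorem \ref{thmLi}), together with the dictionary between linear subspaces $H(I) \subset R_{\vec w_0} \cong \mathbb P^{n-3}$ and multiple points $p(I)$ supplied by Lemma \ref{bij}. The key observation is that there is a distinguished line $l_A$, which is never allowed to pass through a multiple point (Proposition \ref{sing}(I) and Remark \ref{rmk:singularities}), so the only index subsets $I$ that ever destabilize are subsets $I \subset \{1,\dots,n\}$ not containing the index $A$; moreover $H(I)$ has codimension $|I| - 2$ in $R_{\vec w_0}$, so $H(I)$ is a point exactly when $|I| = n - 1$, a line when $|I| = n - 2$, and in general a linear subspace of dimension $n - |I| - 1$. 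As we increase the weights from $\vec w_0$ towards $\vec 1^n$, the walls $W(I)$ are crossed in order of decreasing $|I|$: one first crosses the walls for $|I| = n-1$ (the maximal-size subsets, which correspond to the $n$ points $q_i := H(\{1,\dots,n\}\setminus\{i\})$), then the walls for $|I| = n-2$, and so forth, down to $|I| = 3$. This is exactly the order in which Li's sequential blow-up procedure processes the building set, stratified by dimension of the centers.

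First I would fix, for each $1 \le k \le n-3$, the weight vector $\vec w_k$ to be an admissible weight (in $\mathcal D^R_n$, see Definition \ref{def:adw}) lying just past all walls $W(I)$ with $|I| \ge n-k$ and before all walls $W(I)$ with $|I| < n-k$; such $\vec w_k$ exist by the wall-and-chamber structure on $\mathcal D^R_n$ and one may take them increasing, $\vec w_1 \le \vec w_2 \le \cdots \le \vec w_{n-3} = \vec 1^n$ (note $\vec 1^n$ destabilizes every $W(I)$ with $2 \le |I| \le n-1$). By Lemma \ref{build}, the building set $\mathcal H_{\vec w_k}$ consists precisely of the $H(J)$ with $|J| \ge n-k$. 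Then I would invoke the fact — which is the content of Li's construction, Definition \ref{factors} and the discussion preceding Theorem \ref{thmLi}, and which the paper has already been using implicitly in Lemma \ref{lem:universaltotal} — that $Bl_{\mathcal H_{\vec w_{k}}} R_{\vec w_0}$ is obtained from $Bl_{\mathcal H_{\vec w_{k-1}}} R_{\vec w_0}$ by blowing up the (disjoint union of) strict transforms of the members of $\mathcal H_{\vec w_k} \setminus \mathcal H_{\vec w_{k-1}}$, i.e. the strict transforms of the $H(J)$ with $|J| = n-k$, which are linear subspaces of dimension $k-1$. Identifying $H(J)$ with the span of the $n - |J| = k$ points $\{q_i : i \notin J\}$ (this identification is Lemma \ref{bij} applied to the complementary index set, since collinearity/coplanarity of the dual points translates into membership of a linear span), these are exactly the strict transforms of the $(k-1)$-planes spanned by all $k$-tuples among $q_1,\dots,q_n$. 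Taking $k=1$ gives the $n$ points themselves, giving the first bullet; $k=2$ gives the lines through pairs, giving the second bullet; and $k = n-3$ gives the $(n-4)$-planes through $(n-3)$-tuples, giving the last bullet.

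The two points requiring genuine care are: (i) verifying that the $n$ points $q_i$ are in general position in $\mathbb P^{n-3}$ in the strong sense stated (no $j+1$ of them in a $(j-1)$-plane for any $j$), and (ii) justifying that the blow-ups within a single stage $k$ may be performed in any order, and that distinct $H(J)$ with $|J| = n-k$ have disjoint strict transforms at the moment they are blown up. For (i): a set $\{q_i : i \notin J\}$ of $k$ points fails to be in general position exactly when the corresponding $H(J)$ has dimension strictly less than $n - |J| - 1 = k - 1$, equivalently when the linear equations (determinantal conditions on the $s_i$ from Lemma \ref{bij}) defining $H(J)$ are dependent; this non-degeneracy is precisely the statement that $\mathcal S_{\vec w}$ is an arrangement and that the $H(J)$ intersect with expected codimension, which is established in the proof of Lemma \ref{build} (the transversality computation with the local parameters $r_m$ shows exactly that intersections have the expected dimension, forcing general position). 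For (ii): within stage $k$, two subspaces $H(J), H(J')$ with $|J| = |J'| = n-k$ and $J \ne J'$ satisfy $|J \cap J'| \le n - k - 1$, so their intersection (if nonempty) is contained in some $H(J'')$ with $|J''| > n-k$ that was already blown up in an earlier stage; hence their strict transforms in $Bl_{\mathcal H_{\vec w_{k-1}}} R_{\vec w_0}$ are disjoint, so the order of blowing them up is immaterial — this is the standard "blow up centers of the same dimension in any order once their intersections have been resolved" principle, valid here because of the transversality in Lemma \ref{build}. I expect step (ii), and more precisely the bookkeeping that at stage $k$ the relevant strict transforms are exactly the $H(J)$ with $|J| = n-k$ and that they have become pairwise disjoint and smooth, to be the main obstacle; everything else is either dimension counting or a direct appeal to Theorem \ref{thmLi}, Lemma \ref{build}, and Lemma \ref{bij}.
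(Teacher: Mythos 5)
Your skeleton is the same as the paper's: identify $q_i$ with the point $H(\{1,\dots,n\}\setminus\{i\})$, use Lemma \ref{build} to see that the building set for intermediate weights $\vec w_k$ consists of the $H(J)$ with $|J|\ge n-k$, and realize the wonderful compactification as an iterated blow up along strict transforms of these centers in order of increasing dimension, with order-independence coming from \cite[Thm 1.3]{li}; the paper's proof is exactly that citation plus the dictionary between the $H(I)$ and the spans of the $q_i$. However, the step you yourself single out as the crux, point (ii), is argued incorrectly, and as written it fails. It is not true that within stage $k$ the strict transforms of distinct centers $H(J),H(J')$ with $|J|=|J'|=n-k$ are disjoint: your argument only covers $|J\cap J'|\ge 2$, where indeed $H(J)\cap H(J')=H(J\cup J')$ is an earlier center, but when $|J\cap J'|\le 1$ the intersection parametrizes configurations with two separate multiple points, which lie in the arrangement $\mathcal S_{\vec w}$ but in no $H(J'')$ with $|J''|>n-k$. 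Concretely, for $n=7$ the stage-three centers $H(\{4,5,6,7\})=\operatorname{span}(q_1,q_2,q_3)$ and $H(\{1,2,3,7\})=\operatorname{span}(q_4,q_5,q_6)$ are $2$-planes in $\mathbb{P}^4$, so they meet in a point (a configuration with two quadruple points); one checks that this point lies on no $q_i$ and no $H(I)$ with $|I|=5$, hence on no previously blown-up center, so the strict transforms still meet at stage three. What makes the order within a stage immaterial is not disjointness but transversality: by Lemma \ref{build} the centers meet transversally, blow ups along transversally meeting smooth centers commute, and this is precisely what \cite[Thm 1.3]{li} provides --- which is how the paper argues and what you should quote instead of the disjointness claim.

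Your treatment of point (i) is also a non sequitur, although the statement is true. The points $\{q_i : i\notin J\}$ always lie in $H(J)$, and $H(J)$ always has the expected dimension $n-|J|-1$ by Lemma \ref{bij}; what must be shown is that these $n-|J|$ points actually \emph{span} $H(J)$, and that does not follow from the independence of the linear equations defining $H(J)$. One correct route is induction on $|J^c|$: at the configuration $q_{i_0}$ all lines other than $l_{i_0}$ pass through a single point $P$, so if the lines indexed by $J\cup\{i_0\}$ (with $|J|\ge 2$) were also concurrent, the concurrency point would have to be $P$ and all $n$ lines would be concurrent, which is excluded from $R_{\vec w_0}$ by Lemma \ref{base}; hence $q_{i_0}\in H(J)\setminus H(J\cup\{i_0\})$ and $H(J)$ is spanned by $H(J\cup\{i_0\})$ together with $q_{i_0}$, giving both the identification of $H(J)$ with the span of $\{q_i: i \notin J\}$ and the general position of the $q_i$ (alternatively, compute the $q_i$ explicitly in the coordinates of Notation \ref{notation} and use the genericity of $q$). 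With (ii) replaced by the appeal to Li's theorem and (i) repaired in this way, your argument coincides with the paper's.
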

\begin{proof}
The wonderful blowup is by definition a sequence of iterative blow ups along the strict transforms of the elements in the building set 
$\mathcal H_{1^n}$.  The points
$q_i$ correspond to $H(I)$ with $|I|=(n-1)$, the lines spanned by the points $q_i$ correspond to  $H(J)$ with $|J|=(n-2)$, and so on.   
The order of the blow-ups can be taken to be any order of increasing dimension by \cite[Thm 1.3]{li}.
\end{proof}

\subsection{$R_{\vec w}$ is isomorphic to a wonderful compactification.}

Our aim is to show that $R_{\vec w}$ is  isomorphic to the wonderful compactification $Bl_{\vec w}\mathbb P^{n-3}$.  First we review $R_{\vec w}$ for small values of $n$.

\begin{example}\label{rmk:basecases} \leavevmode
\begin{enumerate} 
\item If $\vec w \in \mathcal D^R_3$, then $R_{\vec w}$ is a point. 
\item If  $\vec w \in \mathcal D^R_4$, then $R_{\vec w} \cong \mathbb P^1$, as $\overline{M}_{1^6}(\mathbb{P}^2, 5) \cong \overline{M}_{0,5}$.
\item If $\vec w \in \mathcal  D^R_5$, then  
$R_{\vec w} \cong Bl_{\vec w}\mathbb P^{2}$. In particular, the morphism $R_{1^5} \to R_{\vec w_0} \cong \mathbb P^2$ is the blow up of $\mathbb P^2$  at five points and the morphisms induced by wall crossings inside $\mathcal D^R_5$  are either smooth blow ups or isomorphisms. 
Indeed, it is known that $\overline{M}_{1^6}(\mathbb{P}^2, 6)$ has isolated singularities (see  \cite[Thm 4.2.4]{luxton}).  Therefore,  by the construction of $R_{\vec w}$ as in Definition \ref{def:Rn} it follows that $R_{1^5}$ is smooth.  We note that the building set $\mathcal H_{1^5}$ is described in Example \ref{bR5}, and that the smoothness of $R_{\vec w}$ follows from the smoothness of $R_{1^5}$ and Theorem \ref{main}.
\end{enumerate}
 \end{example}

\begin{theorem}
\label{thm:iso} For any choice of $n$ and $\vec w \in \mathcal D^R_n$, it holds that
$
R_{\vec w} \cong Bl_{\vec w} R_{\vec w_0},
$ and thus $R_{\vec w}$ is smooth with normal crossings boundary.
 \end{theorem}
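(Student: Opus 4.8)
The plan is to produce a finite birational morphism $Bl_{\vec w} R_{\vec w_0} \to R_{\vec w}$, prove that $R_{\vec w}$ is normal, and then invoke Zariski's main theorem to conclude that this morphism is an isomorphism; smoothness and the normal crossings property of the boundary then follow for free from Theorem \ref{thmLi}, since $Bl_{\vec w} R_{\vec w_0}$ is by definition a wonderful compactification (with building set $\mathcal H_{\vec w}$, which is indeed a building set by Lemma \ref{build}). So the argument has three ingredients, which I would present in order: construction of the morphism, normality of $R_{\vec w}$, and the application of Zariski's main theorem.

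First I would construct the morphism. By Lemma \ref{lem:universaltotal} there is a flat proper family $\phi_{\vec w} \colon \mathcal U_{\vec w} \to Bl_{\vec w} R_{\vec w_0}$ together with $n$ hypersurfaces $\sigma_i(\vec w)$ and the distinguished divisor $l_A$, whose fibers are stable shas of weight $\vec w$. By the moduli property of $\overline M_{(\vec w,1)}(\bP^2, n+1)$ this family induces a morphism $Bl_{\vec w} R_{\vec w_0} \to \overline M_{(\vec w,1)}(\bP^2, n+1)$, and since a generic sha in this family lies in $R_{\vec w_0}$ (hence has fixed intersection $q$ with $l_A$), the image lands in $R_{\vec w}$ by definition of the latter as a closure (Definition \ref{def:Rn}). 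This gives $f \colon Bl_{\vec w} R_{\vec w_0} \to R_{\vec w}$. It is birational because over the open locus of $R_{\vec w_0}$ (where no wall is crossed) it is the identity, and both sides have dimension $n-3$ by Lemma \ref{dim} together with $\dim Bl_{\vec w} R_{\vec w_0} = \dim \bP^{n-3} = n-3$. Finiteness: $f$ is proper (source is proper, target is separated) with finite generic fiber, and I would argue it has finite fibers everywhere by comparing the fibers of the wall-crossing morphisms — Lemma \ref{fibers} computes the scheme-theoretic fibers of $\phi_{\vec v, \vec u}\colon R_{\vec v} \to R_{\vec u}$ to be $\bP^{|I|-3}$, exactly matching the fibers of the corresponding blow-up $Bl_{\vec v} R_{\vec w_0} \to Bl_{\vec u} R_{\vec w_0}$ along $\overline H(I)$; inducting along a minimal chain of adjacent weights from $\vec w_0$ to $\vec w$ and using that $f$ intertwines these morphisms forces the fibers of $f$ to be zero-dimensional, hence (being proper) finite.

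The main obstacle is normality of $R_{\vec w}$. Here is where Lemma \ref{fibers} does the real work. I would argue by induction on the length $m$ of a minimal chain $\vec w_0 = \vec\gamma_1 \le \cdots \le \vec\gamma_{m+1} = \vec w$ of adjacent weights; the base case $R_{\vec w_0} \cong \bP^{n-3}$ is Lemma \ref{base}. For the inductive step, suppose $R_{\vec\gamma_m}$ is normal; it is then isomorphic to $Bl_{\vec\gamma_m} R_{\vec w_0}$ by the already-established finite birational morphism plus Zariski's main theorem, hence smooth. The wall-crossing $\phi_{\vec w,\vec\gamma_m}\colon R_{\vec w} \to R_{\vec\gamma_m}$ has scheme-theoretic fibers $\bP^{|I|-3}$ over the smooth center $\overline H(I)$ (which is smooth since in $Bl_{\vec\gamma_m} R_{\vec w_0}$ it is the strict transform $S(I)$, smooth by the wonderful construction) and is an isomorphism elsewhere, by Lemma \ref{fibers}. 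Combined with the finite birational morphism $Bl_{\vec w} R_{\vec w_0} \to R_{\vec w}$, which restricts on fibers to the identity $\bP^{|I|-3} \to \bP^{|I|-3}$, one concludes that $Bl_{\vec w} R_{\vec w_0} \to R_{\vec w}$ is finite, birational, and bijective onto a variety which is a blow-up of a smooth variety along a smooth center — in particular $R_{\vec w}$, being locally a quotient-free image matching the normal blow-up fiber by fiber, has the same local rings and is therefore normal. More carefully: the map is finite birational between varieties of the same dimension, the source is normal (it is even smooth), and the scheme-theoretic fiber comparison from Lemma \ref{fibers} shows the map is unramified and bijective, hence an isomorphism by Zariski's main theorem directly; this simultaneously gives $R_{\vec w} \cong Bl_{\vec w} R_{\vec w_0}$ and its normality. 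I would also record the birational morphisms $R_{\vec w} \to \overline M_{0,\vec w} \to \bP^{n-3}$ promised in Theorem \ref{main1} as an immediate consequence, noting $\overline M_{0,\vec w}$ itself is a wonderful-type blow-up of $\bP^{n-3}$ (Kapranov) and our $R_{\vec w}$ maps to it compatibly via the building set inclusion. The one genuinely delicate point, which the proof of Lemma \ref{fibers} already addresses via the deformation-theoretic argument with normal bundles, is that the fibers are reduced and of the expected dimension scheme-theoretically, not just set-theoretically — without that, the comparison with the smooth blow-up would not yield normality, so I would make sure to cite Lemma \ref{fibers} precisely at that step.
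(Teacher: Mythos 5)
Your overall strategy coincides with the paper's: use the family of Lemma \ref{lem:universaltotal} together with the fine moduli property to produce $f\colon Bl_{\vec w}R_{\vec w_0}\to R_{\vec w}$, note that it is finite and birational (hence the normalization map), reduce everything to normality of $R_{\vec w}$, and conclude by Zariski's main theorem, inducting along a chain of adjacent weights with Lemma \ref{fibers} as the key input. The problem is in how you actually establish normality. Your ``more careful'' version asserts that the fiber comparison makes $f$ unramified and bijective, ``hence an isomorphism by Zariski's main theorem directly.'' That is circular: the version of ZMT you invoke (a quasi-finite birational morphism onto a \emph{normal} Noetherian scheme is an open immersion) requires normality of the target, which is exactly what is to be proved, and a finite birational bijection onto a non-normal variety need not be an isomorphism (the normalization of a cuspidal cubic is finite, birational and bijective). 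A non-circular conclusion needs either (i) the closed-immersion criterion (proper, unramified and universally injective implies closed immersion, and a surjective closed immersion onto a variety is an isomorphism), or (ii) what the paper does: take the exact sequence $0\to\calO_{R_{\vec v}}\to f_*\calO_{Bl_{\vec v}R_{\vec w_0}}\to\delta\to 0$ and show $\delta=0$ by restricting to the scheme-theoretic fibers over each point $p$, where Lemma \ref{fibers} identifies the fiber of the wall crossing with $\bP^{|I|-3}$, matching the fiber of the smooth blow-up, so that the first arrow is an isomorphism fiberwise and Nakayama finishes.

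Both your route and the paper's hinge on the claim that $f$ restricts on exceptional fibers to an isomorphism $\bP^{|I|-3}\to\bP^{|I|-3}$; you call it ``the identity'' without argument. This is not automatic: a finite surjective self-map of $\bP^{k}$ can have degree larger than one, and a normalization map can have degree bigger than one over a divisor of its target (Whitney umbrella), so finiteness and birationality of $f$ alone do not force degree one on the exceptional locus. What makes it work is the scheme-theoretic statement of Lemma \ref{fibers} combined with the moduli interpretation of both fibers (distinct points of the exceptional $\bP^{|I|-3}$ in $Bl_{\vec w}R_{\vec w_0}$ carry non-isomorphic shas, as in Lemma \ref{base}, so $f$ is injective there); once that is said, your unramifiedness claim does follow and the closed-immersion criterion, or the paper's cokernel argument, completes the proof. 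As written, however, the normality step does not go through.
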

\begin{proof} 
Our proof is by induction on the weight vector.
The base case is $R_{\vec w_0}$ which is discussed in 
Lemmas \ref{base} and \ref{lemma:universal}. 
Let $\vec v \geq \vec u$ be two adjacent weights separated by the wall $W(I)$ which destabilizes the multiple point $p(I)$.  Now consider the following diagram

\[
  \begin{tikzcd}
    Bl_{\vec v} R_{\vec w_0} \arrow{r}{f_{\vec v}} \arrow{d}{\psi_{\vec v, \vec u}} & R_{\vec v} \arrow{d}{\phi_{\vec v, \vec u}} \\
     Bl_{ \vec u} R_{\vec w_0} \arrow{r}{\cong} & R_{\vec u}
  \end{tikzcd}
\]
where the morphism $\psi_{\vec v, \vec u}$ is the blowup 
$$
  Bl_{\vec v} R_{\vec w_0} 
\cong
Bl_{\overline H(I)}
\left(
Bl_{ \vec u} R_{\vec w_0}
\right)
\to 
Bl_{ \vec u} R_{\vec w_0}
$$
 induced by the wonderful compactification, and $\phi_{\vec v, \vec u}$ is the wall crossing morphism induced by changing the weights. By induction, we assume that $Bl_{\vec u} R_{\vec w_0}
 \cong R_{\vec u}$ and thus $R_{\vec u}$ is smooth. We must now 
show that $R_{\vec v}$ is also smooth. 

By Lemma \ref{lem:universaltotal}, there is a flat family 
$\mathcal U_{\vec v} \to Bl_{\vec v}R_{\vec w_0}$ whose fibers are stable shas with respect to $\vec v$.
On the other hand, 
$\overline M_{(\vec w,1)}\left( \mathbb P^2, n+1 \right)$ is a fine moduli space by  \cite[Lemma 7.7]{wha}.
Therefore, there is a map
$f_{\vec v}: Bl_{\vec v}R_{\vec w_0} \to R_{\vec v}$.
Let 
$E_{\vec v} \subset Bl_{\vec v R_{\vec w_0}}$ 
and 
$F_{\vec v} \subset R_{\vec v}$ 
be 
the exceptional divisors of 
$\phi_{\vec v, \vec u}$  and $ \varphi_{\vec v, \vec u}$ respectively.
By construction $f_{\vec v}$ is an isomorphism 
when restricted to the open sets
$$
\left(  Bl_{\vec v}R_{\vec w_0} \right)  \setminus E_{\vec v}
\to 
R_{\vec v} \setminus F_{\vec v} 
$$
and the restriction 
$f_{\vec v}: E_{\vec v} \to F_{\vec v}$ is a finite morphism because
both exceptional divisors are $\mathbb P^{|I|-3}$ fibrations over 
$\overline H(I)$.

In particular, the above argument implies the morphism  $f_{\vec v}$ is the normalization. Therefore, since $Bl_{\vec v} R_{\vec w_0}$ is smooth, by Zariski's main theorem, it suffices to show that $R_{\vec v}$ is normal. To do so, we consider the exact sequence 
arising from normalization: $$ 0 \to \calO_{R_{\vec v}} \to f_* \calO_{Bl_{\vec v} R_{\vec w_0} } \to \delta \to 0.$$ Our goal is to prove that $\delta = 0$. If $p \in R_{\vec u}$ is a point parametrizing a 
configuration which is stable with respect to both weights $\vec v$ and $\vec u$, then the morphisms $\psi_{\vec v, \vec u}$ and $\phi_{\vec v, 
\vec u}$ are both isomorphisms, and there is nothing to prove. Therefore, we may assume that $p$ is a point which induces a blowup. 

To look at the fiber over the point $p$ we tensor by $\calO_{R_{\vec v}} / {I_p \calO_{R_{\vec v}}}$ to obtain:  $$  \calO_{R_{\vec v}} / {I_p \calO_{R_{\vec v}}} \to  (f_* \calO_{Bl_{\vec v} \bP^{n-3}}) 
 \otimes (\calO_{R_{\vec v}} / {I_p \calO_{R_{\vec v}}}) \to \delta \otimes (\calO_{R_{\vec v}} / {I_p \calO_{R_{\vec v}}}) \to 0.$$
 
 The wonderful compactification is a sequence of iterative smooth blows, so by dimension counting  the fiber of $\psi_{\vec v, \vec u}$ over $p$ is a $\bP^{|I|-3}$. Furthermore, by Lemma \ref{fibers}, the fiber of $\phi_{\vec v, \vec u}$ over $p$ is also a $\bP^{|I|-3}$. As $f_{\vec v}$ is the normalization, and both $\phi_{\vec v, \vec u}^{-1} (p)$ and $\psi_{\vec v, \vec u}^{-1}(p)$
 are scheme theoretically $\bP^{|I|-3}$, the projective spaces must be isomorphic. As the first arrow above is an isomorphism, we see that $$\delta \otimes (\calO_{R_{\vec v}} / {I_p \calO_{R_{\vec v}}})  = 0.$$ As this is true for all $p \in R_{\vec v}$, we see that $\delta = 0$, and thus $R_{\vec v}$ is normal. 
 \end{proof}

%%%%%%%%%%%%%%%%%%%%%%%%%%%%%%%%%%%%%%
\subsection{Consequences of the blow up construction}

\begin{theorem} \label{main}
There is a birational projective morphism
$
\R_{\vec w} \to \R_{\vec w_{0}} \cong \mathbb{P}^{n-3}
$  
that can be understood as a sequence of smooth blowups.
 In particular, the morphism
$\R_{1^n}  \to \mathbb{P}^{n-3}$ can be understood as completing the steps descried in Lemma \ref{order}.
\end{theorem}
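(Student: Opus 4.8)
The plan is to obtain Theorem \ref{main} as a formal consequence of the identification $\R_{\vec w} \cong Bl_{\vec w} R_{\vec w_0}$ from Theorem \ref{thm:iso}, together with the description of wonderful compactifications as iterated smooth blowups. First I would recall that $\R_{\vec w_0} \cong \mathbb{P}^{n-3}$ by Lemma \ref{base}, and that by the construction of \cite{li} (used already in the proof of Lemma \ref{order}) the wonderful compactification $Bl_{\vec w} R_{\vec w_0}$ of $\mathbb{P}^{n-3}$ with respect to the building set $\mathcal H_{\vec w}$ of Lemma \ref{build} is obtained from $\mathbb{P}^{n-3}$ by a sequence of blowups along smooth centers, namely the successive strict transforms of the members of $\mathcal H_{\vec w}$, taken in any order of nondecreasing dimension. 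Each such blowup has nowhere dense smooth center, hence is birational and projective, and therefore so is the composite $\R_{\vec w} \cong Bl_{\vec w} R_{\vec w_0} \to \mathbb{P}^{n-3}$; birationality may also be seen directly, since the wonderful compactification agrees with $\mathbb{P}^{n-3}$ over the dense open complement of $\bigcup_{S \in \mathcal H_{\vec w}} S$. This proves the first assertion.

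For the statement about $\R_{1^n}$ I would specialize the building set. When $\vec w = 1^n$ the defining inequality $\sum_{i \in I} w_i > 2$ in Lemma \ref{build} becomes $|I| \geq 3$, so $\mathcal H_{1^n} = \{ H(I) \mid I \subsetneq \{1, \dots, n\},\ |I| \geq 3 \}$. By Lemma \ref{bij} each $H(I)$ is a linear subspace of $\mathbb{P}^{n-3}$ of dimension $n - |I| - 1$; in particular its $n$ zero-dimensional members are the points $q_j := H(\{1, \dots, n\} \setminus \{j\})$, which are in general position by Lemma \ref{order}. Since $H(I) \subseteq H(I')$ whenever $I' \subseteq I$, a member $H(I)$ with $\dim H(I) = d$, equivalently $|I| = n - 1 - d$, contains exactly the $d+1$ points $\{ q_j \mid j \notin I \}$, and by general position together with the dimension count it is their linear span. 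Hence the stage of the wonderful blowup in dimension $d$ blows up the strict transforms of the $d$-planes spanned by all $(d+1)$-subsets of $\{ q_1, \dots, q_n \}$, for $d = 0, 1, \dots, n - 4$; this is exactly the sequence of Lemma \ref{order}. Composing with the isomorphism $\R_{1^n} \cong Bl_{1^n} R_{\vec w_0}$ then gives the asserted description of $\R_{1^n} \to \mathbb{P}^{n-3}$.

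I do not expect a serious obstacle here: the substantive content has already been absorbed into Theorem \ref{thm:iso} (normality of $\R_{\vec w}$, via Lemma \ref{fibers}) and into Lemma \ref{order}. What remains is combinatorial bookkeeping --- matching the inclusion-reversing correspondence $I \mapsto H(I)$ with the lattice of linear spans of the $q_j$ --- and citing \cite{li} for the admissibility of the nondecreasing-dimension ordering. For a general admissible weight $\vec w$ the same reasoning shows that $\R_{\vec w} \to \mathbb{P}^{n-3}$ is the subsequence of the $\R_{1^n}$ blowups that retains only the centers $H(I)$ with $\sum_{i \in I} w_i > 2$; I would note this at the end.
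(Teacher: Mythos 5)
Your proposal is correct and follows essentially the same route as the paper, whose proof of Theorem \ref{main} is simply to invoke Theorem \ref{thm:iso} together with the blowup description of the wonderful compactification already recorded in Lemma \ref{order} and \cite{li}. Your additional combinatorial bookkeeping matching $H(I)$ to the linear spans of the points $q_j$, $j \notin I$, is exactly the content of (the proof of) Lemma \ref{order}, so nothing genuinely new or different is being done.
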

\begin{proof}
The theorem follows from  Theorem \ref{thm:iso}.
\end{proof}

\begin{lemma}
\label{hassett}\cite{hassett2003moduli}
Let  $\vec \alpha=(\alpha_1, \ldots, \alpha_n)$ be a set of weights where 
\begin{align*}
\alpha_1 =1,
& &
\alpha_2 = 1-\frac{(n-2)}{n-1}+\frac{1}{2(n-1)},
& &
\alpha_3=\ldots =\alpha_n=\frac{1}{n-1}.
\end{align*}
Then $\overline M_{0, \vec \alpha}=\mathbb P^{n-3}$. 
Let 
$\delta_I \subset \bP^{n-3}$ be the locus parametrizing configurations of $n$ points in $\bP^1$ such that
$\{ p_{i_1}=\ldots = p_{i_s} \; | \; i_k \in I \}$.
Then,  for every 
$\vec w > \vec \alpha$, it follows that 
$
\overline M_{0, \vec w}
$
is the wonderful compactification of $\mathbb P^{n-3}$ with respect to the building set
$$
\mathcal S_{\vec w} := \{ 
\bP^{(n-|I|)-2} \cong
 \delta_I \subset \bP^{n-3} \; | \; \sum_{i \in I} w_i > 1, \;
 I \subset \{ 2, \ldots, n \}, \;
2 \leq |I| \leq (n-2)
\}.
$$
\end{lemma}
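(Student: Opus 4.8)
\textbf{Proof proposal for Lemma \ref{hassett}.}

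The plan is to reduce the statement to the known chamber structure of the Hassett spaces $\overline M_{0,\vec w}$ (see \cite{hassett2003moduli}) together with Kapranov's description of $\overline M_{0,n}$ as an iterated blow-up of $\mathbb P^{n-3}$, and then to recognize this iterated blow-up as a wonderful compactification in the sense of Definition \ref{factors}. First I would verify the base case: for the specific weight vector $\vec\alpha$, one checks directly from Hassett's criterion that no boundary divisors survive --- a subset $I \subset \{1,\dots,n\}$ contributes a boundary stratum only if $\sum_{i\in I}\alpha_i > 1$ and $\sum_{i\notin I}\alpha_i > 1$, and with $\alpha_1 = 1$ the chosen values of the $\alpha_i$ are calibrated so that only $I$ containing the index $1$ together with at most one other index can satisfy both inequalities, which forces $|I| \in \{1, n-1\}$, i.e. no actual strata. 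Hence $\overline M_{0,\vec\alpha}$ is the smooth projective variety $\mathbb P^{n-3}$, realized via the Kapranov map sending the reduced point $p_1$ and the light points $p_2,\dots,p_n$ to the $n-1$ general points and their spans.

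Next I would identify, for a weight $\vec w > \vec\alpha$, exactly which walls of Hassett's chamber decomposition are crossed in passing from $\vec\alpha$ to $\vec w$. By Hassett's wall-crossing theorem, $\overline M_{0,\vec w}$ is obtained from $\overline M_{0,\vec\alpha} = \mathbb P^{n-3}$ by a sequence of blow-ups (in order of increasing dimension of centers) along the strict transforms of the loci $\delta_I$ for those $I$ whose reduced stratum becomes stable, namely those with $\sum_{i\in I} w_i > 1$ and (automatically, since $\alpha_1=1\leq w_1$) $1\notin I$, with $2 \le |I| \le n-2$; the dimension of $\delta_I$ in $\mathbb P^{n-3}$ is $(n-3)-(|I|-1) = (n-|I|)-2$, matching the statement. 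I would then check that the collection $\mathcal S_{\vec w}$ of these linear subspaces is an arrangement closed under intersection and forms a building set --- this is the same combinatorial verification as in Lemma \ref{build}, since the $\delta_I$ are linear subspaces of $\mathbb P^{n-3}$ in coordinatized position (collinearity/coplanarity conditions on the $p_i$), so intersections $\delta_I \cap \delta_J$ are again of the form $\delta_{I\cup J}$ when $|I\cap J|\geq 1$, giving the building set property and the transversality condition (a) and (b) of Definition \ref{clean}.

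Finally, I would match the two constructions: Li's wonderful compactification $Bl_{\mathcal S_{\vec w}}\mathbb P^{n-3}$ is, by \cite[Thm 1.3]{li}, the iterated blow-up along strict transforms of the $\delta_I$ taken in order of increasing dimension, which is precisely the description of $\overline M_{0,\vec w}$ coming from Hassett's wall-crossings; since both are smooth projective, birational to $\mathbb P^{n-3}$ via the identity on the common open locus $M_{0,\vec w}^o$, and built by the same sequence of blow-ups with the same centers, they are isomorphic. The main obstacle I anticipate is bookkeeping: being careful that the condition $I \subset \{2,\dots,n\}$ (equivalently $1 \notin I$) correctly captures which Hassett walls are crossed --- one must confirm that no wall involving the heavy index $1$ is relevant for $\vec w > \vec\alpha$, and that the "in any order of increasing dimension" freedom in Li's theorem genuinely matches Hassett's reduction sequence, so that the identification of centers is exact and not merely up to strict transform ambiguity.
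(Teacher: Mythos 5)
Your proposal is correct and follows essentially the same route as the paper: the paper's proof is a short citation-based argument (Hassett's Section 6.2 for $\overline M_{0,\vec\alpha}\cong\mathbb P^{n-3}$, his Theorem 4.1 for the reduction morphism $\overline M_{0,\vec w}\to\overline M_{0,\vec\alpha}$, the identification of $\mathcal S_{\vec w}$ as the locus destabilized by $\vec w$, and the observation that $1\notin I$ since otherwise $\delta_I$ is already unstable for $\vec\alpha$). Your additional details --- the explicit stability check for $\vec\alpha$, the building-set verification as in Lemma \ref{build}, and the matching with Li's iterated blow-up description --- simply flesh out what the paper leaves to the references.
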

\begin{proof}
The existence of a set of weights 
$\vec \alpha$ such that $\overline M_{0, \vec \alpha} \cong \mathbb P^{n-3}$ is well-known (see \cite[Sec 6.2]{hassett2003moduli}). The condition 
$\vec w > \vec \alpha$ guarantees the existence of a morphism 
$\overline M_{0, \vec w} \to \overline M_{0, \vec \alpha} $
 (see \cite[Thm 4.1]{hassett2003moduli}).
 The set $\mathcal S_{\vec w}$ is the locus
in $\mathbb{P}^{n-3}$ that becomes unstable with respect to the weights $\vec w$.  In particular, the condition $1 \not\in I$ is necessary for $\delta_I \subset \mathbb P^{n-3}$, otherwise $\delta_I$ is unstable with respect to $\vec \alpha$.
\end{proof}

\begin{corollary}\label{genw}
Given  a set of weights $\vec w =(1, w_2, \ldots, w_{n})$, 
there is a morphism
$
\R_{\vec w} \xrightarrow{} \overline M_{0, \vec w}
$
which can be interpreted as a continuation of a blow up construction 
$\overline M_{0,\vec w} \to \mathbb{P}^{n-3}$. 
\end{corollary}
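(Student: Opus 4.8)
The plan is to identify $\R_{\vec w}$ and $\overline M_{0,\vec w}$ as wonderful compactifications of the \emph{same} $\mathbb P^{n-3}$, to realize the building set producing $\overline M_{0,\vec w}$ as a sub--building set of the one producing $\R_{\vec w}$, and then to appeal to the standard morphism between wonderful compactifications attached to nested building sets. By Theorem~\ref{thm:iso} and Lemma~\ref{build}, $\R_{\vec w}\cong Bl_{\vec w}R_{\vec w_0}$ is the wonderful compactification of $R_{\vec w_0}\cong\mathbb P^{n-3}$ with building set $\mathcal H_{\vec w}=\{\,H(J)\mid \sum_{j\in J}w_j>2\,\}$, and by Lemma~\ref{hassett} (applied to $\overline M_{0,\vec w}$, which is of the type treated there since $w_1=1$), $\overline M_{0,\vec w}$ is the wonderful compactification of $\overline M_{0,\vec\alpha}\cong\mathbb P^{n-3}$ with building set $\mathcal S_{\vec w}=\{\,\delta_I\mid I\subset\{2,\dots,n\},\ \sum_{i\in I}w_i>1\,\}$.

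The key step is to identify $\overline M_{0,\vec\alpha}$ with $R_{\vec w_0}$ so that $\mathcal S_{\vec w}$ becomes a subset of $\mathcal H_{\vec w}$, and this is where the hypothesis $w_1=1$ enters. Since the line $l_1$ has weight one, restricting a configuration parametrized by $\vec s\in R_{\vec w_0}$ to $l_1\cong\mathbb P^1$ produces the $n$ marked points $\{l_i\cap l_1\mid 2\le i\le n\}\cup\{l_A\cap l_1\}$, with weights $(w_2,\dots,w_n,1)$, i.e.\ a point of $\overline M_{0,\vec w}$. In the coordinates of Notation~\ref{notation} one checks directly that this assignment is a projective--linear isomorphism $r\colon R_{\vec w_0}\xrightarrow{\ \sim\ }\overline M_{0,\vec\alpha}$, and that for $1\in J$ it carries the linear subspace $H(J)$ (configurations with an $|J|$--fold point among $\{l_j\mid j\in J\}$) onto $\delta_{J\setminus\{1\}}$ (the coincidence of the points $l_j\cap l_1$ for $j\in J\setminus\{1\}$), the dimensions matching because $n-|J|-1=(n-|J\setminus\{1\}|)-2$. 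Moreover the two instability conditions agree precisely because $w_1=1$:
$$
\sum_{j\in J}w_j>2 \iff \sum_{j\in J\setminus\{1\}}w_j>1 .
$$
Hence $r$ identifies $\mathcal S_{\vec w}$ with $\{\,H(J)\in\mathcal H_{\vec w}\mid 1\in J\,\}\subseteq\mathcal H_{\vec w}$, a sub--building set.

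Given this, the factorization is formal. Since $\mathcal S_{\vec w}\subseteq\mathcal H_{\vec w}$ as building sets on $\mathbb P^{n-3}$, deleting $\bigcup_{S\in\mathcal H_{\vec w}}S$ deletes in particular $\bigcup_{S\in\mathcal S_{\vec w}}S$, so the open stratum of $\R_{\vec w}$ embeds into $\prod_{S\in\mathcal S_{\vec w}}Bl_S\mathbb P^{n-3}$; passing to closures, the projection $\prod_{S\in\mathcal H_{\vec w}}Bl_S\mathbb P^{n-3}\to\prod_{S\in\mathcal S_{\vec w}}Bl_S\mathbb P^{n-3}$ restricts to a projective birational morphism
$$
\R_{\vec w}\cong Bl_{\mathcal H_{\vec w}}\mathbb P^{n-3}\ \longrightarrow\ Bl_{\mathcal S_{\vec w}}\mathbb P^{n-3}\cong\overline M_{0,\vec w}
$$
(equivalently, invoke the morphism of wonderful compactifications attached to nested building sets, cf.\ \cite{li}). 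Composing with the Hassett contraction $\overline M_{0,\vec w}\to\overline M_{0,\vec\alpha}=\mathbb P^{n-3}$ recovers, through $r$, the morphism $\R_{\vec w}\to R_{\vec w_0}\cong\mathbb P^{n-3}$ of Theorem~\ref{main}; and since $\R_{\vec w}\to\overline M_{0,\vec w}$ is itself the wonderful blow--up along the strict transforms of the remaining centers $H(J)\in\mathcal H_{\vec w}$ with $1\notin J$, this exhibits it as a continuation of the blow--up $\overline M_{0,\vec w}\to\mathbb P^{n-3}$. For $\vec w=1^n$ one recovers the explicit statement of the introduction: $\R_{1^n}\to\overline M_{0,n}$ is the successive blow--up of an interior point $q_1$, then of the strict transforms of the lines $\langle q_1,q_j\rangle$, then of the planes $\langle q_1,q_j,q_k\rangle$, and so on.

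The only point requiring genuine care is the identification in the second paragraph: that restriction to $l_1$ is a \emph{global} isomorphism of projective spaces carrying the stratification by the $H(J)$ to that by the $\delta_I$ — not merely a birational map — and in particular that no $H(J)$ with $1\notin J$ lies in the preimage of $\mathcal S_{\vec w}$ (this is exactly the displayed equivalence and is where $w_1=1$ is used). A more hands--on route to the existence of the morphism, bypassing this identification, is to restrict the universal family of shas over $\R_{\vec w}$ of Lemma~\ref{lem:universaltotal} to the divisor $\sigma_1(\vec w)$ associated with $l_1$: by adjunction this is a flat family of weighted stable rational curves of weight $(w_2,\dots,w_n,1)$, so the fine moduli property of $\overline M_{0,\vec w}$ produces $\R_{\vec w}\to\overline M_{0,\vec w}$ directly; the wonderful description is then still what makes the ``continuation of a blow up'' interpretation transparent.
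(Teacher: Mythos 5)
Your proposal is correct and follows essentially the same route as the paper: define the map by restricting to $l_1$, identify $\mathcal S_{\vec w}$ with the sub-building set $\{H(I)\in\mathcal H_{\vec w}\mid 1\in I\}$ via the equivalence $\sum_{i\in I}w_i>2\iff\sum_{i\in I\setminus\{1\}}w_i>1$ (using $w_1=1$), and invoke Li's rearrangement result to factor $\R_{\vec w}=Bl_{\mathcal H_{\vec w}}\bP^{n-3}=Bl_{\mathcal H_{\vec w}\setminus\mathcal S_{\vec w}}\bigl(Bl_{\mathcal S_{\vec w}}\bP^{n-3}\bigr)=Bl_{\mathcal H_{\vec w}\setminus\mathcal S_{\vec w}}\bigl(\overline M_{0,\vec w}\bigr)$. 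Your closing alternative (restricting the universal family to $\sigma_1(\vec w)$ and using the fine moduli property) is a harmless variant of the paper's adjunction definition of the morphism.
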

\begin{proof}
The weights of $l_A$ and $l_1$ are one by construction, then we can define the morphism
$
\psi: R_{\vec w}  \to \overline M_{0,\vec w}
$
by intersecting the broken lines $\{ l_A, l_2, \ldots, l_n \}$ with $l_1$. That is
$$
\left(
X, l_A+\sum_{k=1}^{n} w_{k}l_k
\right)
\longrightarrow
\left(
l_1,  (l_A+\sum_{k=2}^{n} w_{k}l_k)\bigg|_{l_1}
\right).
$$
The morphism is well defined by adjunction.
Notice that the set
$
\left\lbrace
H(I) \in \mathcal H_{\vec w} \; | \; 1 \in I
\right\rbrace
$
is isomorphic to  $\mathcal S_{\vec w}$ as in Lemma \ref{hassett} above.
Indeed,
for an index set $I \subset \{1, \ldots n \}$ such that $1 \in I$, it holds that
$$
\sum_{i \in I} w_{i} > 2 \Longleftrightarrow 
\sum_{i \in I \setminus {1}} w_i > 1.
$$
Moreover, 
$
\bP^{(n-|I-1|)-2}
\cong
\delta_{I \setminus 1}  \cong H(I) \cong \bP^{(n-|I|)-1}
$
by Lemma 
\ref{bij},  and if $I$ and $K$ are indices 
containing 1, then 
$\delta_{I\setminus 1} \cap \delta_{K \setminus 1} \neq \emptyset$ if and only if 
$H(I) \cap H(K) \neq \emptyset$. Finally, we use that 
$
\mathbb P^{n-3} \cong R_{\vec \alpha} \cong \overline M_{0,\vec \alpha}
$ to identify these sets.

By \cite[Thm 1.3.ii]{li}, the wonderful blowup does not change if we rearrange the elements of $\mathcal H_{\vec w}$ so that
the first $k$ terms form a building set for any $1\leq  k \leq n$.  Therefore, by Theorem \ref{thm:iso}, we have
\begin{align*}
\R_{\vec w}
=
Bl_{\mathcal H_{\vec w}} \left( \bP^{n-3} \right) 
=
Bl_{\mathcal H_{\vec w} \setminus \mathcal S_{\vec w}  }   \left(  Bl_{\mathcal S_{\vec w}  }  \bP^{n-3} \right)
=
Bl_{\mathcal H_{\vec w} \setminus \mathcal S_{\vec w}  
}  \left(  \overline M_{0,\vec w} \right)
\end{align*}
where $Bl_{\mathcal H_{\beta} \setminus S_{\beta} }$ denotes the blow up along the strict transform of the elements in  the set 
$\mathcal H_{\beta} \setminus S_{\beta}$.

The description in the statement of our result follows by comparing Lemma \ref{order} with the blow up construction of $\overline M_{0,n}$ outlined  in the introduction.\end{proof}

We now show that there do not exist weights $\vec w$ so that $R_{\vec w} \cong \overline{M}_{0,n}$. 

\begin{proof}[Proof of Theorem \ref{exclu}]
Let $\vec w \in \mathcal D^R_n$,  we will show that $\mathcal H_{\vec w}$ cannot be equal to the locus 
$S_{\vec w}$ required to construct $\overline M_{0,n}$
as described in \cite[Sec 6.2]{hassett2003moduli}. 
If we suppose otherwise, then $\vec w$ destabilizes $(n-1)$ points and all the linear subspaces spanned by them while the $n$th point is stable with respect to $\vec w$. 
In other words, 
let $H(I_k) \in \mathcal H_{\vec w}$, where $|I_k| = (n-1)$ for $ k = 1, ..., n-1$ and  $H(I_{n}) \not\in \mathcal H_{w}$ where $|I_n|=(n-1)$.  The existence of $\vec w$ is equivalent to the existence of a solution for the following system of inequalities. 
\begin{align}\label{eq:1}
w_{i_1}+w_{i_2}+w_{i_3} >  2
& 
\qquad{}
\forall \; 
\{ i_1, i_2, i_3\} \subset I_{k} 
&
\\
\label{eq:2}
 \sum_{i \in I_{n}} w_i \leq 2
,&  
\qquad{}
0 < w_i \leq 1.
&
\end{align} 
The inequality \eqref{eq:1} is associated to destabilizing the $(n-2)$-planes generated by $H(I_k)$ with $1 \leq i \leq n$. The inequality 
\eqref{eq:2} follows because $H(I_n)$ is stable with respect to $\vec w$. 
Without loss of generality, we set 
$I_{n}=\{2, \ldots, n \}$.  Since $|I_k|=(n-1)$,  there is at least one $I_k$ such that  $I_k \cap I_n $ has at least three  distinct elements  $i_1, i_2, i_3$ and so the inequality \eqref{eq:1} for these three elements contradicts \eqref{eq:2}.
\end{proof}

%%%%%%%%%%%%%%
\section{$R_{1^n}$ as a Non-reductive Chow Quotient}\label{sec:chow}

%We are abusing notation when we denote the generic one as $R_{\vec w}$, because in general different points $q_n \in M_{0,n}$ will yield non-isomorphic fibers. 
In this section, we discuss the proof of Theorem  \ref{ChowThm}.   An important step of the proof is based on the fact that the dual graphs of the pairs parametrized by $R_{\vec{w}}$ are always rooted trees,  with the root vertex corresponding to the component containing the line $l_A$. To keep track of the lines $l_i$,  we mark the vertices corresponding to the last component containing the broken line $l_i$.  \\
\begin{figure}[h!]
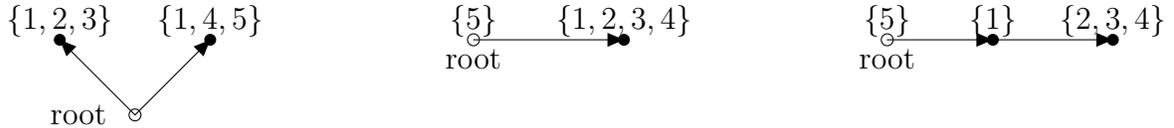

\baselineskip=10pt
\hsize=6.3truein
\vsize=8.7truein
\tikzpicture[line cap=round,line join=round,>=triangle 45,x=1.0cm,y=1.0cm, scale=0.5]
\draw [->] (-6,10) -- (-2,10);
\draw [->] (7.81,10.0) -- (11,10);
\draw [->] (5,10) -- (7.81,10.0);

\draw [->] (-15,8) -- (-13,10);
\draw [->] (-15,8) -- (-17,10);
\draw [color=black] (-15,8) circle (4.5pt);
\fill [color=black] (-17,10) circle (4.5pt);
\fill [color=black] (-13,10) circle (4.5pt);
\draw (-17,10.5) node {$\{ 1,2,3 \}$};
\draw (-13,10.5) node {$\{ 1,4,5 \}$};
\draw (-16.5,8) node {root};

\draw (-6,10.5) node {$\{ 5 \}$};
\draw (-6,9.5) node {root};
\draw (-2,10.5) node {$\{ 1,2,3,4 \}$};
\draw [color=black] (-6,10) circle (4.5pt);
\fill [color=black] (-2,10) circle (4.5pt);

\draw (5,10.5) node {$\{5 \}$};
\draw (7.81,10.5) node {$\{ 1 \}$};
\draw (11,10.5) node {$\{ 2,3,4 \}$};
\draw (5,9.5) node {root};
\fill [color=black] (11,10) circle (4.5pt);
\fill [color=black] (7.81,10.0) circle (4.5pt);
\draw [color=black] (5,10) circle (4.5pt);

\endtikzpicture
\caption{Left to right: Dual graphs associated to the last sha of Fig.
\ref{examples} and last 2 of Fig. \ref{examples2} resp.}
\end{figure}

We highlight that there is a configuration space known as $T_{d,n}$ which  generalizes $\overline{M}_{0,n}$ (see \cite{chen2009pointed}), and is a non reductive Chow quotient  under the same group \cite{pn}. The objects parametrized by $T_{d,n}$ are known as stable rooted trees, and are the union of surfaces $X \cong Bl_m\bP^2$ , as in our space, but with markings given by points rather than lines.

\begin{remark}\label{rmk:kapranov} We recall Kapranov's construction of  $\overline{M}_{0,n}$ as a Chow quotient (see \cite{chow}). Given a collection of  $n$ generic points  $p_i $ in $\bP^1$, we  consider the cycle associated to the closure of the orbit:
$
\overline{ \SL_3\cdot (p_1, \ldots, p_n)}
\subset (\bP^1)^n.
$
Varying the points, we obtain cycles parametrized by an open locus in the appropriate Chow variety. Taking the closure of this open set, we obtain the Chow quotient $(\bP^1)^n \ChowQ \SL_2$ which is isomorphic to  $\overline{M}_{0,n}$. \end{remark}

We fix our line $l_A$ once and for all, and  denote by $\hat \bP^2$ the dual projective space.  The lines $\{ l_1, \ldots l_n  \}$ are parametrized by points $p_1, ..., p_n \in (\hat \bP^2)^{n}$.   
Let $G \subset \operatorname{SL}(2, \mathbb C)$ be the group 
acting on $\bP^2$  that fixes the line $l_A \subset \bP^2$ pointwise. Then
$G \iso \mathbb G_m \rtimes \mathbb G_a^2$, $\dim(G)=3$, and if $l_A:=(x_0=0)$,  the group consists of elements of the form:
\begin{align*}
G = 
\begin{pmatrix}
t^{-2}&      0 & 0     \\
s_0 & t & 0 \\
s_1 & 0 & t
  \end{pmatrix}
\end{align*}
Given a point $p_i=[a_0:a_1:a_2] \in \hat \bP^2$, the  line associated to it by projective duality can be written as  $l(\vec x) := \left( p_i \cdot x=0 \right)$.  
Then we have $l(g\cdot x)= (p_i \cdot g )(x)$  from which 
we obtain  the following action of $G$ on $\hat \bP^2$.

\begin{definition}
Let $g \in G$ be as above, then we define the action on $\hat \bP^2$ as
$$
g \cdot [a_0:a_1:a_2]
:=
[t^{-3}a_0+\frac{s_0}{t}a_1+\frac{s_1}{t}a_2: a_2:a_3]
$$
After acting with the group, the line $l(x)=(a_0x_0+a_1x_1+a_2x_2=0)$ 
becomes
$$
\left( t^{-3}a_0+\frac{s_0}{t}a_1+\frac{s_1}{t}a_2 \right) x_0+ a_1x_1+a_2x_2
$$
In particular, the intersection point $l(x) \cap (x_0=0)$ is invariant under the action of $G$.
\end{definition}

Inside $(\hat \bP^2)^{n}$, we define the loci
$$
U(q):=\{(p_1, \ldots, p_n) \in (\hat \bP^2)^n \; | \; l_i \cap l_A
\text{ are fixed with equivalence class  $q \in M_{0,n}$.} \} 
$$
Notice that   $\dim(U_{n}(q))=n$. We select once and for all a connected component of  the closure of $U(q_n)$ and we denote it, by abuse of notation, 
as $\overline U(q_n)$. In particular, we fix an intersection $\{ l_i \cap l_A \}$ once and for all for the rest of this chapter, so we omit it after here and just write  $\overline U$.

\begin{proposition}\label{prop:openquot}
The Chow quotient 
$ \overline U/ \! \!/_{Ch}G$ 
is birational to  
$R_{1^n}$.
\end{proposition}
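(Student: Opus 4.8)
The plan is to show that a generic point on each side corresponds to the same geometric object, namely a configuration of $n$ lines with fixed intersection class $q$ on $l_A$, modulo the same symmetry group, and then conclude that the two varieties share a common open dense subset. Since both $\overline{U}/\!\!/_{Ch}G$ and $R_{1^n}$ are irreducible of dimension $n-3$ (the left side because $\dim \overline{U} = n$ and $\dim G = 3$ and the generic stabilizer is finite; the right side by Lemma \ref{dim}), it suffices to produce a $G$-invariant rational map $\overline{U} \dashrightarrow R_{1^n}$ which is generically a fibration with fibers the $G$-orbits, i.e. which descends to a birational map on the Chow quotient.

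First I would set up the comparison on the open locus. A generic point of $\overline{U}$ is a tuple $(p_1,\ldots,p_n) \in (\hat{\bP}^2)^n$ whose dual lines $l_1,\ldots,l_n$ together with $l_A$ form an arrangement of $n+1$ lines in general position with $l_i \cap l_A$ inducing the fixed class $q$; this is exactly a point in the fiber $\varphi_A^{-1}(q) \subset M_{(\vec{w},1)}(\bP^2,n+1)$, hence a point of $R_{1^n}$ by Definition \ref{def:Rn}. Because the $G$-action on $\hat{\bP}^2$ fixes every point $l_i \cap l_A$ (by the Definition preceding this proposition, the intersection $l(x)\cap(x_0=0)$ is $G$-invariant), the whole $G$-orbit of a generic tuple lands in the same fiber of $\varphi_A$ and, moreover, by Lemma \ref{lemma1} the generic stabilizer in $\SL(3,\bC)$ of such a configuration together with the marking is exactly $\bC^*$; restricting to $G$, the stabilizer is finite, so generic $G$-orbits have dimension $3$. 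Two generic tuples in $\overline{U}$ lie in the same $G$-orbit if and only if the associated arrangements are projectively equivalent by an element of $G$, which (since all points of $l_A$ are individually fixed) is exactly the same as saying they define the same point of $R_{1^n}$. This gives a bijection, on a dense open set, between $G$-orbits in $\overline{U}$ and points of $R_{1^n}$.

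Next I would promote this set-theoretic bijection to a birational map of varieties. The cleanest route is to use the universal family: over the open locus $R_{1^n}^{\circ} \subset R_{1^n}$ parametrizing arrangements with no multiple points, there is a tautological family of line arrangements, and taking duals gives a morphism $R_{1^n}^{\circ} \to (\hat{\bP}^2)^n / G$ whose image is an open subset of $\overline{U}/\!\!/_{Ch}G$; conversely, over the open locus of $\overline{U}$ where the orbit closure is a well-behaved cycle, the assignment of a tuple to its arrangement (viewed as a point of the moduli space) is a morphism to $R_{1^n}$ constant on $G$-orbits, hence factors through $\overline{U}/\!\!/_{Ch}G$. These two morphisms are mutually inverse on the overlapping dense opens, giving the birational equivalence. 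Here one uses that the Chow quotient $\overline{U}/\!\!/_{Ch}G$ agrees with the naive geometric quotient $\overline{U}^{\circ}/G$ on the locus where orbit closures are reduced of the expected dimension, which is standard.

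The main obstacle, and the step I would be most careful about, is verifying that the generic $G$-orbit in $\overline{U}$ is genuinely $3$-dimensional and has reduced orbit-closure cycle of the expected dimension, so that it actually defines a point of the Chow variety used in the quotient — in other words, ruling out that the $G$-action on a generic fiber of $\varphi_A$ degenerates (e.g. has positive-dimensional stabilizer because too many of the fixed points $l_i\cap l_A$ collide, which is excluded by genericity of $q$, or because the orbit fails to be closed in a way that breaks the expected cycle class). Lemma \ref{lemma1} handles the stabilizer computation once one fixes three lines in general position, so the key point is simply to observe that for generic $q$ a generic arrangement in $\varphi_A^{-1}(q)$ does contain three lines among $l_1,\ldots,l_n,l_A$ in general position and that the finite stabilizer forces the orbit to have dimension $\dim G = 3$, matching $\dim \overline{U} - \dim(\overline{U}/\!\!/_{Ch}G) = n - (n-3) = 3$; the remaining Chow-theoretic bookkeeping about cycles is routine and is presumably deferred to the full proof of Theorem \ref{chowt}.
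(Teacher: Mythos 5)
Your proposal is correct and follows essentially the same route as the paper's proof: a natural $G$-invariant map from an open subset of $U$ to $R_{1^n}$ (invariance because $G$ fixes $l_A$ pointwise), generic injectivity because any $g\in\SL(3,\bC)$ identifying two such configurations must fix $l_A$ and the $n$ intersection points and hence lie in $G$, and dominance by choosing a normalized representative, with the identification of the Chow quotient with the naive quotient on the generic locus left implicit in the paper. Your added stabilizer/dimension bookkeeping is fine in substance (generic $G$-orbits are indeed $3$-dimensional), though note that the $\bC^*$ of Lemma \ref{lemma1} actually sits inside $G$, so finiteness of the generic stabilizer comes from that $\bC^*$ moving the remaining generic lines, not merely from ``restricting to $G$.''
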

\begin{proof}
By shrinking if necessary, we can find an open subet $U' \subset U$  contained in a  $G$-invariant open locus in $(\hat \bP^2)^n$, so that there is a natural map $\psi : U'  \rightarrow R_{1^n}$. Furthermore, the $G$-action fixes 
the line $l_A$ pointwise, and thus fixes $l_i \cap l_A$. As a result, all configurations in the orbit $G \cdot l_i$ are isomorphic as line arrangements in $\bP^2$, and thus are equivalent in $R_{1^n}$. Therefore, $\psi$ is $G$-invariant and induces a 
morphism $\overline{\psi}: U' / G \to R_{1^n}$. 
This morphism is injective on an open set in $R_{1^n}$, because if generic $p, p' \in U'$  satisfy
$\overline{\psi}(p)=\overline{\psi}(p')$, then there is a $g \in SL(3, \mathbb C)$ such that 
$ g \cdot p=p'$. 
This last equality implies $g$ fixes the line $l_A$ as well as all of the intersections $l_i \cap l_A$, and so $g \in G$ and $p$ and $p'$ are in the same $G$-orbit. The map $\overline{\psi}$ is dominant, because for a generic isomorphic class of lines parametrized by $\R_{1^n}$,  we can choose a representative where $l_A$ and $l_i \cap l_A$ are as in the beginning of this section, and that representative is parametrized by $U'$. 
\end{proof}

Next, we show that the birational map 
$\rho : R_{1^n} \dashrightarrow \overline{U} \ChowQ G$ 
is a regular morphism.   This is done by associating a cycle to each sha $X$ parametrized by 
$\overline R_{1^n}$. 
We recall that each component $X_v$ of $X$ is either $\bP^2$, the blow up of $\bP^2$ at finite number of points, or $\bP^1 \times \bP^1$ (see Proposition \ref{sing}), and that there is a \emph{contraction morphism} 
$\varphi_{v} : X \rightarrow \PP^2$ that contracts $X_v$ to $\bP^2$ while also contracting all other components. 
 For each $v \in I$, the contraction morphism induces a
 line 
arrangement $\varphi_{v}(X)$
defined  up to choice of coordinates. We always select a representative which, by an abuse of notation, we denote by $\varphi_{v}(X)$, so  $l_A:=(x_0=0)$ and the points $l_A \cap l_i$ are the same as the ones used to define $U$.

\begin{definition}\label{def:compconfig}
Fix a closed point of $R_{1^n}$ parametrizing the sha $X = \cup_{v \in I} X_v$. 
The \textbf{configuration cycle} $Z(X)$ is: 
$$
Z(X) : = \displaystyle\sum_{v \in I} 
\overline{ G \cdot \varphi_v(X) } \subsetneq ( \hat \bP^2)^n.
$$
\end{definition}
\noindent
We must show that these configuration cycles  all have the same dimension and homology class.
Let $\vec m:=\{ m_1, \ldots ,m_n \}$ be a set of integers
such that  $\sum_{i=1}^n m_i = 3$ and $0 \leq m_i \leq 2$.
By the K\"unneth formula, a basis for the  
homology  in $(\hat \bP^2)^n$ is  $[\hat \PP^{m_1}]\otimes\cdots\otimes[\hat \PP^{m_n}]$. 
Let $\mathbb L_{\vec m} := L_1\times \cdots \times L_n$ be a collection of generic linear subspaces $L_i \subseteq \hat \PP^2$ of codimension $m_i$. 
The homology class of the orbit $\overline{G \cdot p}$ is 
$$
[\overline G \cdot p]= 
\sum_{\vec m} c_{\vec m}\left( [\PP^{m_1}]\otimes\cdots\otimes[\PP^{m_n}] \right)
$$
where  $\left( \overline{ G \cdot p }  \right) \cdot \mathbb L_{\vec m}$ is the intersection of the orbit $\overline{ G \cdot p } $ with the generic linear subspaces $\mathbb L_{\vec m}$.
\begin{proposition}\label{prop:homclass}
Let $\vec m$ be as above
and $X = \cup_{v\in I} X_v$,  then the homology class $[Z(X)]$ 
of the cycle $Z(X)$ is 
\begin{align}\label{ZX}
[Z(X)]:=
\sum_{\vec m={m_1, \ldots, m_n}}
\left(
\sum_{v \in I} \overline{ G \cdot \varphi_v(X) }  \cdot \mathbb L_{\vec m}
\right)
\left( [\PP^{m_1}]\otimes\cdots\otimes[\PP^{m_n}] \right)
\end{align}
\noindent
In particular, if $X$ is a generic point of $R_{1^n}$ (i.e. $X$ is supported on a single $\bP^2$). Then
$$
[Z(X)]= 
\sum_{\vec m} c_{\vec m}\left( [\PP^{m_1}]\otimes\cdots\otimes[\PP^{m_n}] \right)
$$
where $c_{\vec m}$ is either 0 or 1.  
\end{proposition}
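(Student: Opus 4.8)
The plan is to prove the equality of homology classes in two steps: first establish the formula for a generic configuration $X$ (supported on a single $\bP^2$), then show that the class is constant in families, so that the formula descends to all of $R_{1^n}$ by flatness/properness of the configuration cycles. For the generic case, $X$ is a single $\bP^2$ with the $n$ lines in general position subject only to the fixed intersection data $\{l_i \cap l_A\}$, so $Z(X) = \overline{G \cdot \varphi(X)}$ is a single orbit closure of dimension $\dim G = 3$ inside $(\hat\bP^2)^n$. By the K\"unneth decomposition, its class is $\sum_{\vec m} c_{\vec m}([\PP^{m_1}]\otimes\cdots\otimes[\PP^{m_n}])$ where $c_{\vec m} = \left(\overline{G\cdot p}\right)\cdot \mathbb L_{\vec m}$, the number of points in the intersection of the orbit closure with a generic product of linear subspaces of the indicated codimensions. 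The remaining claim for the generic case is that $c_{\vec m} \in \{0,1\}$: I would argue this by showing the orbit map $G \to (\hat\bP^2)^n$ is injective (which follows from Lemma \ref{lemma1}, since the stabilizer of a generic configuration together with the fixed points on $l_A$ is trivial), so $\overline{G\cdot p}$ is a rational threefold, and then that intersecting with generic $\mathbb L_{\vec m}$ cuts it transversally in at most one point. The ``at most one point'' is the arithmetic heart: using the explicit triangular form of $G$ (the matrix displayed before Definition \ref{def:compconfig}) and the explicit action on $\hat\bP^2$, the conditions ``$p_i \in L_i$'' for a generic codimension-$m_i$ subspace become polynomial equations in the three parameters $(t,s_0,s_1)$, and one checks these are either inconsistent or have a unique solution once $\sum m_i = 3$; here the upper bound $m_i \le 2$ and the group's unipotent-by-torus structure keep each equation at most linear in the appropriate coordinates.

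For the general case, the key point is that $Z(X)$ always has dimension $3$ and varies in an algebraic family over $R_{1^n}$. Since $R_{1^n}$ is irreducible (it is birational to $\bP^{n-3}$ by Theorem \ref{main}) and the construction $X \mapsto Z(X)$ of Definition \ref{def:compconfig} gives a family of cycles of fixed dimension $3$, all members of a flat family of cycles lie in the same homology class of $(\hat\bP^2)^n$; hence $[Z(X)] = [Z(X_{\mathrm{gen}})]$ for every $X$. This gives the stated formula \eqref{ZX}: the class is $\sum_{\vec m}\left(\sum_{v\in I}\overline{G\cdot\varphi_v(X)}\cdot \mathbb L_{\vec m}\right)([\PP^{m_1}]\otimes\cdots\otimes[\PP^{m_n}])$, where the coefficient of each K\"unneth generator is computed as an intersection number with the generic linear subspaces, and each such intersection number equals the corresponding $c_{\vec m}$ from the generic case.

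I expect the main obstacle to be verifying $c_{\vec m}\in\{0,1\}$ in the generic case, i.e. controlling the intersection $\overline{G\cdot p}\cdot \mathbb L_{\vec m}$ on the nose rather than just bounding its dimension. One has to confirm both that the intersection is proper (generic translates of the $L_i$ meet the orbit closure transversally and away from the boundary $\overline{G\cdot p}\setminus G\cdot p$, which requires a Kleiman–Bertini type argument valid despite $G$ being non-reductive) and that the count is never $\ge 2$ — this is where the explicit block form of $G$ is essential, since it lets one solve the system $\{p_i \in L_i\}$ coordinate by coordinate and see that the torus factor $t$ and the two additive parameters $s_0,s_1$ are each pinned down linearly. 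A secondary technical point is justifying ``constant in families'' cleanly: one should note that the cycles $Z(X)$ fit together into an actual family of algebraic cycles (equivalently, a morphism from $R_{1^n}$ to a Chow variety), which is exactly what Proposition \ref{prop:openquot} and the subsequent discussion of $\rho: R_{1^n}\dashrightarrow \overline U\ChowQ G$ are setting up; invoking that the fundamental class is locally constant on Chow varieties then finishes the argument.
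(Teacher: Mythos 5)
Your treatment of the generic case is essentially the paper's own argument (which the paper gets by quoting the analogue of Kapranov's Proposition 2.1.7): the coefficient $c_{\vec m}$ counts elements $g \in G$ with $g\cdot p_i \in L_i$ for all $i$, and since the action is $g\cdot[a_0:a_1:a_2]=[t^{-3}a_0+\tfrac{s_0}{t}a_1+\tfrac{s_1}{t}a_2:a_1:a_2]$, the conditions are affine-linear in $(t^{-3},s_0/t,s_1/t)$, giving at most one solution modulo the scalar kernel; the transversality and boundary-avoidance issues you flag are exactly what the citation to Kapranov absorbs. So that half is the same route as the paper, although you leave the decisive ``inconsistent or unique'' verification as an expectation rather than carrying it out.

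The general case is where your proposal goes wrong. Formula \eqref{ZX} does not assert that $[Z(X)]$ equals the class of the generic orbit closure: it is simply the K\"unneth/Poincar\'e-duality expansion of the class of the cycle $Z(X)$ itself, the coefficient of $[\PP^{m_1}]\otimes\cdots\otimes[\PP^{m_n}]$ being the intersection number of $Z(X)$ (not of a generic member) with a generic $\mathbb L_{\vec m}$, since the classes $[\mathbb L_{\vec m}]$ form the dual basis. No statement about families is needed, and the one you invoke is circular at this point of the paper: that the pointwise-defined cycles $Z(X)$ fit into an algebraic family, i.e.\ that $X\mapsto Z(X)$ defines a morphism to the Chow variety with $\lim_{t\to 0}Z(X_t)=Z(X_0)$, is precisely the content of the later proposition constructing $\rho$, whose proof uses Proposition \ref{prop:homclass} together with Proposition \ref{prop:maxdegenclass}; you cannot assume it here. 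Moreover your closing claim that for degenerate $X$ each intersection number $\sum_{v}\overline{G\cdot\varphi_v(X)}\cdot\mathbb L_{\vec m}$ ``equals the corresponding $c_{\vec m}$ from the generic case'' is an assertion the proposition does not make and nothing available at this stage justifies; it is essentially Proposition \ref{prop:maxdegenclass} plus the limit (squeeze) argument in the proof of the existence of $\rho$, i.e.\ the hard part of Section \ref{sec:chow}, not a consequence of flatness. The repair is to drop the family argument entirely: for arbitrary $X$, \eqref{ZX} is duality in $H_*((\hat\PP^2)^n)$ applied to $Z(X)$ as defined, and the only genuine content is the generic $0/1$ statement handled in your first part.
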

\begin{proof}
The result follows verbatim from the analogous \cite[Proposition 2.1.7]{chow}. The main idea is as follows: let $p_i \in \hat \bP^2$ be the points parametrizing the lines $l_i$ in $\varphi_v(X)$. Then,  $c_{\vec m}=1$ if and only if there is a unique $g\in G \subset SL(3, \mathbb C)$ such that $g\cdot p_i \in L_i$ for all $1 \le i \le n$; and $c_{\vec m}$ is zero if there is no such as $g \in G$. For generic $X$ those are the only cases, so we only have those coefficients.
\end{proof}
%We wish to compute the homology class \[[\overline{G \cdot p}] \in H_{6}((\hat \PP^2)^{n},\mathbb{Z})\] of the closure of the orbit of various configurations of points $p = (p_1,\ldots,p_n)$, when this orbit is full-dimensional.  For instance, for a generic configuration this is the class $\delta$ from \S\ref{subsection:delta} determining the component of the Chow variety into which our Chow quotient embeds.
%\begin{remark}
%In principle one could use this result to directly compute the coefficients of $\delta$, and of any other orbit closure of interest, but we will see later that we actually only need to do this in a special case where it is easier to apply this proposition.  It will follow (Corollary \ref{cor:all1s}) that $\delta$ has coefficients all equal to 1, but we do not need this fact yet.
%\end{remark}
It will turn out that we only need to calculate the homology of the cycles associated to the maximal degenerations parametrized by 
$R_{1^n}$.
\begin{lemma}\label{lemma:maxdegen}
A closed point $X =\cup_{v \in I}X_v$ in $R_{1^n}$ is \emph{maximally degenerate}, that is it lies on a minimal (i.e., deepest) stratum of the boundary stratification, if and only if the configuration of lines $\varphi_v(X_v)$ has exactly three lines $l_i$ with $1 \leq i \leq n$ in general position for every $v\in I$, not including $l_A$ or its image.
\end{lemma}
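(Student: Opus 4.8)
The plan is to characterize maximally degenerate points via a dimension count on boundary strata, using the fact from Proposition \ref{sing} that $Graph(X)$ is a rooted tree together with Lemma \ref{fibers} and Lemma \ref{lem:universaltotal}, which tell us exactly how strata are created by wall-crossings. First I would recall that by the construction in Section \ref{sec:wonderful}, the boundary strata of $R_{1^n}$ correspond to the strata of the wonderful compactification $Bl_{1^n}R_{\vec w_0}$, which are intersections of the exceptional divisors $D_{S_k}$ indexed by nested/transverse collections of elements $H(I) \in \mathcal H_{1^n}$. Geometrically, each divisor $D_{H(I)}$ is the locus where the lines $\{l_i \mid i \in I\}$ have been separated off into a new $\bP^2$ component (i.e.\ where the multiple point $p(I)$ has been resolved), so a deepest stratum corresponds to a maximal such nested collection.

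Next I would translate "deepest stratum" into a statement about the components $X_v$. By Lemma \ref{fibers}, crossing a wall $W(I)$ contributes a fiber of dimension $|I|-3$ to the exceptional locus; hence the codimension in $R_{1^n}$ of the stratum where $p(I)$ is resolved is $\big(n-3\big) - \dim H(I) = |I|-2 \ge 1$, and this is $0$-dimensional (a point in the stratum) precisely when all the $|I|-3$-dimensional "moduli of the new $\bP^2$ with $|I|$ lines plus gluing line" have themselves been exhausted, i.e.\ when every $\bP^2$ component carries exactly three of the lines $l_i$ ($i \le n$) in general position, together with its gluing locus (or $l_A$ in the root). Conversely, if some component $\varphi_v(X_v)$ carried $\ge 4$ of the lines $l_i$ in general position, then by the description of the fibers in the proof of Lemma \ref{lem:universaltotal} (the configuration of those lines plus the gluing line in $\bP^2$ is governed by $R_{\vec w_0}$ for the corresponding smaller $n$, which has positive dimension $|I|-3 \ge 1$), we could degenerate $X_v$ further by crossing another wall, so $X$ would not lie on a minimal stratum. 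If instead a component carried $\le 2$ of the lines $l_i$, then together with the gluing locus the pair would be non-stable (it would have a component that can be contracted, contradicting Proposition \ref{sing}[III] and stability), so this case does not occur for an actual sha in $R_{1^n}$. Thus three lines in general position per component, excluding $l_A$ and the gluing loci, is exactly the condition.

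I would present this as: ($\Leftarrow$) if every $\varphi_v(X_v)$ has exactly three lines $l_i$ in general position, no further wall-crossing can occur (there is no multiple point $p(I)$ with $|I|\ge 3$ whose resolution changes the pair, and the new $\bP^2$'s that would appear carry $\le 3$ lines, so they have no moduli by Lemma \ref{base} with the gluing locus playing the role of $l_A$), hence $X$ lies on a stratum of dimension $0$; ($\Rightarrow$) if $X$ is maximally degenerate but some $X_v$ has four or more of the lines in general position, the moduli of that local configuration is $\ge 1$-dimensional by Lemma \ref{base}/Lemma \ref{fibers}, so there is a positive-dimensional family of shas specializing to a strictly deeper one, contradicting minimality.

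The main obstacle I anticipate is making the bookkeeping of the rooted-tree structure precise: one must argue that the total dimension of a boundary stratum is the sum over the internal components of the local moduli dimensions $|I_v|-3$ (plus contributions from the root), and that these local counts are independent and additive — i.e.\ that separating one multiple point does not constrain the moduli of separating another. This follows from the transversality of the building set $\mathcal H_{1^n}$ established in Lemma \ref{build} (so the wonderful compactification's strata have the expected codimensions) combined with the inductive fiber-bundle structure of $\mathcal U_{\vec w}$ in Lemma \ref{lem:universaltotal}, but assembling these into a clean "stratum dimension $=\sum_v (|I_v|-3)$" formula, and checking the edge cases where a line $l_i$ passes through several components ("broken lines", Remark \ref{rmk:singularities}(3)) so that the count of "three lines per component" is really about the restrictions $D|_{X_v}$ and not the global lines, will require some care.
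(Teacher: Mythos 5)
Your key step---a component whose image carries more than three of the lines $l_i$ in general position has positive-dimensional moduli (the $\bP^{|I|-3}$-type count of Lemma \ref{base}) and can therefore be degenerated further, while exactly three general-position lines per component leaves no moduli---is exactly the paper's proof, which consists of just this observation justified by the fact that the group fixing the gluing line pointwise is three-dimensional. The wonderful-compactification stratum bookkeeping you layer on top (including the incidental misstatement that the locus where $p(I)$ is resolved has codimension $|I|-2$ in $R_{1^n}$, when it is in fact the exceptional divisor over the codimension-$(|I|-2)$ center $H(I)$) is not needed for the paper's two-sentence argument.
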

\begin{proof}
Recall that the group $G$ is three dimensional. If $\varphi_v(X_v)$ has more than three lines, not including $l_A$ or its image, in general position, then $X_v$ has moduli larger than zero, and it can be degenerated further. 
\end{proof}

\begin{proposition}\label{prop:maxdegenclass}
If the sha $X \in R_{1^n}$ is maximally degenerated, then the homology class of $Z(X)$  has all coefficients $c_{\vec m}$ equal to $1$ if and only if for all $m_i \in \vec m$ we have that $m_i \neq 2$. 
\end{proposition}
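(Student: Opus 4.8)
The plan is to reduce the computation of $[Z(X)]$ for a maximally degenerate $X$ to a combinatorial count that is completely controlled by the rooted tree structure of $X$, and then to identify exactly when each coefficient $c_{\vec m}$ is $1$. First I would fix a maximally degenerate sha $X = \cup_{v \in I} X_v$ and recall from Lemma \ref{lemma:maxdegen} that each component carries precisely three lines $l_{i}$ (other than $l_A$ or its image under the contraction) in general position, and from Proposition \ref{sing} that $Graph(X)$ is a rooted tree. By Proposition \ref{prop:homclass} (or rather the additivity implicit in Definition \ref{def:compconfig}), it suffices to understand, for each $v$ and each $\vec m$, the intersection number $\overline{G \cdot \varphi_v(X)} \cdot \mathbb L_{\vec m}$, and then to check that when we sum over $v \in I$ we land on $0$ or $1$ in each multidegree. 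As in the proof of Proposition \ref{prop:homclass}, this intersection number counts elements $g \in G$ with $g \cdot p_i \in L_i$ for all $i$, where $p_i \in \hat\bP^2$ are the points dual to the lines of $\varphi_v(X)$ — and since $G$ fixes $l_A$ pointwise, $g \cdot p_i$ is forced to lie on a fixed line through $p_i$ (the pencil of lines meeting $l_A$ at the prescribed point $l_i \cap l_A$), so effectively $g$ moves each $p_i$ along a fixed $\bP^1$.

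Next I would set up the bookkeeping. The condition $\sum m_i = 3$, $0 \le m_i \le 2$ splits into two cases: either $\vec m$ is a $(1,1,1,0,\dots,0)$-type vector (three entries equal to $1$) or it has an entry equal to $2$ (and hence is of type $(2,1,0,\dots,0)$). In the first case, generic $L_i$ of codimension $1$ are generic lines in $\hat\bP^2$, so the constraint $g \cdot p_i \in L_i$ for the three indices with $m_i = 1$ is three conditions on the $3$-dimensional group $G$, and I would argue — exactly as in the transversality step of \cite[Prop.~2.1.7]{chow}, now using that $G \cong \mathbb G_m \rtimes \mathbb G_a^2$ acts with the explicit matrices written above — that for a component $X_v$ whose three marked lines are those indexed by $\{i_1,i_2,i_3\}$ there is a unique such $g$, giving contribution $1$ in that multidegree and $0$ for the other components (whose marked index sets are different). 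Summing over $v$ then gives $c_{\vec m} = 1$ precisely when the triple $\{i_1,i_2,i_3\}$ occurs as the marking set of some component, and I would observe that in a maximally degenerate sha every $(1,1,1)$-triple that can occur does occur (this is where the tree structure and the "broken line" description from Remark \ref{rmk:singularities} enter: the markings exhaust all admissible triples). In the second case, an entry $m_i = 2$ means $L_i$ is a generic \emph{point} of $\hat\bP^2$, so we demand $g \cdot p_i$ equal a fixed generic point; but $g \cdot p_i$ is confined to the fixed $\bP^1$ determined by $l_i \cap l_A$, and a generic point of $\hat\bP^2$ does not lie on that line, so the count is $0$ for every component — hence $c_{\vec m} = 0$ whenever some $m_i = 2$.

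The main obstacle, and the step I would spend the most care on, is the transversality/uniqueness claim in the $(1,1,1)$ case: showing that for generic $L_{i_1}, L_{i_2}, L_{i_3}$ there is \emph{exactly one} $g \in G$ with $g \cdot p_{i_k} \in L_{i_k}$, and that this holds simultaneously for the relevant component while vanishing for all others, so that no cancellation or overcounting occurs when summing $\sum_{v \in I}$ over the tree. This requires using the explicit form of the $G$-action on $\hat\bP^2$ from the Definition above: writing $g \cdot [a_0:a_1:a_2] = [t^{-3}a_0 + \tfrac{s_0}{t}a_1 + \tfrac{s_1}{t}a_2 : a_1 : a_2]$, the coordinates $a_1, a_2$ (equivalently, the point $l_i \cap l_A$) are invariant, and only the $a_0$-coordinate is affected, in a way that is affine-linear in $(s_0/t, s_1/t, t^{-3})$. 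So the condition $g \cdot p_{i_k} \in L_{i_k}$ becomes, for generic $L_{i_k}$, a single affine-linear equation in these three quantities, and three generic such equations have a unique solution — and one must check that this solution corresponds to an honest element of $G$ (i.e. $t \ne 0$), which holds for generic $L_{i_k}$ and for the points $p_{i_k}$ coming from an actual sha because these points are in general position by Lemma \ref{lemma:maxdegen}. Once this is in place, the "if and only if" in the statement follows: $c_{\vec m} = 1$ for all $\vec m$ of type $(1,1,1)$ (equivalently, all $\vec m$ with no entry equal to $2$), and $c_{\vec m} = 0$ otherwise. I would then remark that this exactly mirrors \cite[Prop.~2.1.7]{chow}, with the one-dimensional-less group $G$ in place of $SL_2$ accounting for why the "no $m_i = 2$" condition replaces the corresponding condition there. $\qed$
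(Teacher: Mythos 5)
Your treatment of the case with some $m_i=2$ is fine and matches the paper: a generic point of $\hat\bP^2$ is dual to a generic line of $\bP^2$, and no $g\in G$ can move $l_i$ onto it because $G$ fixes $l_i\cap l_A$. The gap is in the $(1,1,1)$ case. You claim that a component $X_v$ contributes $1$ for the multidegree $\vec m$ supported on $\{i_1,i_2,i_3\}$ exactly when this triple is the ``marking set'' of $X_v$ (its three distinguished lines from Lemma \ref{lemma:maxdegen}), that all other components contribute $0$, and that in a maximally degenerate sha ``the markings exhaust all admissible triples.'' Both halves of this bookkeeping fail. First, the exhaustion claim is false by a simple count: there are $\binom{n}{3}$ multidegrees of type $(1,1,1)$, while a maximally degenerate sha has far fewer components (e.g.\ for $n=6$ there are $20$ triples but only a handful of vertices in the dual tree), so your computation would force $c_{\vec m}=0$ for most $\vec m$ with no entry $2$ --- contradicting the very statement being proved. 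Second, the vanishing claim for the ``other'' components is also false: the cycle $\overline{G\cdot\varphi_v(X)}$ records all $n$ dual points, and the intersection with $\mathbb L_{\vec m}$ is positive whenever, in the contracted arrangement $\varphi_v(X)$, the three lines $l_{i_1},l_{i_2},l_{i_3}$ neither all pass through one multiple point nor overlap the image of $l_A$; this can perfectly well happen for triples that are not the three distinguished lines of $X_v$ (the root component, for instance, contributes for many triples).

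What is actually needed --- and what the paper's proof supplies via Claim \ref{lemma3} and the ensuing tree-descent argument --- is the statement that for each fixed $\vec m$ of type $(1,1,1)$ there is \emph{exactly one} component $v$ with $m_A(v)=0$ and $m_i(v)\le 2$: one walks down the rooted tree (Proposition \ref{sing}) starting at the root; if the three lines are concurrent at a multiple point of $\varphi_0(X)$ one passes to the branch containing them, and all components off that branch are killed because after contraction some line with $m_{i_k}=1$ overlaps the image of $l_A$, i.e.\ $m_A(v)>0$; maximal degeneracy is used only at the final step, to guarantee that if one reaches a leaf the three lines are in general position there. Your proposal correctly identifies the danger of over/undercounting in the sum over $v$, but the mechanism you propose to control it (triple $=$ marking set) is not the right invariant, so the existence and uniqueness of the contributing component --- the heart of the proposition --- is missing. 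The uniqueness-of-$g$ computation you sketch (three affine-linear conditions on $(s_0/t,s_1/t,t^{-3})$) is a reasonable way to see that each positive contribution is exactly $1$, and is consistent with the paper's appeal to the argument of Proposition \ref{prop:homclass}, but it does not repair the combinatorial step.
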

\begin{proof}
First we show the $(\Rightarrow)$ direction by proving the contrapositive. Suppose that there is an $m_i \in \vec m$ such that $m_i=2$. Then  
we claim that for each component $X_v$ of $X$, we have that  
$\varphi_v(X) \cdot \mathbb L_{\vec m'} =0$.  Indeed, $m_i=2$ implies that there is a generic linear subspace 
$L_i  \in \mathbb L_{\vec m'}$ such that
$L_i \cong \bP_i^0 \subset \hat \bP^2$ is a point.  By projective  duality, we obtain a line $\bP_i^1$ in $\bP^2$ that has generic intersection with $l_A$. 
However, there does not exist a $g \in G$ such that $g \cdot l_i=\mathbb P^1_i$
, because  this would imply that both $l_i$ and the $\bP^1_i$ would  intersect $l_A$ at the same point. This is impossible given our action of $G$, because $G$ restricts to the identity in $l_A$.

Next, we show the $(\Leftarrow)$ direction.
We divide the set of lines 
in $\varphi_v(X) \cong \bP^2$
into sets $I_i(v)$ and  $I_A(v)$, where $I_i(v)$ denotes the set of lines associated to the  
the multiple points $p(I_i(v)) \in \varphi_v(X)$ (i.e. points of multiplicity $\geq 3$), and the set $I_A(v)$ denotes the lines overlapping with $l_A$. 
By construction, $I_i(v) \cap I_A(v) =\emptyset$. However, the sets $I_i(v)$ are not necessarily disjoint, as lines can support more than one multiple point.   Of course, if the configuration only has double points, then $I_i(v) =\emptyset$. 
We define the numbers 
$m_i(v):=  \sum_{k \in I_i(v)}m_k$ and 
$m_A(v) := \sum_{k \in I_A(v)}m_k$. If $I_i(v) =\emptyset$, then we take $m_i(v):=0$, and similarly for
$I_A(v)$. We make the following claim.

\begin{claim}\label{lemma3} $\varphi_v(X) \cdot \mathbb L_{\vec m} > 0 \iff m_A(v) =0$, $m_i(v) \leq 2$, and $m_k \leq 1$ for all $i$ and $m_k \in \vec m$.  \end{claim}
\begin{proof}[Proof of Claim \ref{lemma3}] We start with the $(\Rightarrow)$ direction. If $m_A(v)>  0$, then we have  a generic line $L_i \subset \hat \bP^2$
with $i \in I_A(v)$, and thus a generic point $\bP^0_i \subset \bP^2$ in the dual space.  We must find a $g \in G $ such that 
$\bP^0 \in g(l_i)$ for a line $l_i$ that overlaps with $l_A$. This is impossible, because $G$ does not move $l_A$, and so $\varphi_v(X) \cdot \mathbb L_{\vec m} =0$. 

Next, suppose that $m_i(v)=3$.
By the previous argument, we know that if $m_i=2$, then $\varphi_v(X) \cdot \mathbb L_{\vec m} =0$. Then  up to relabelling, we  can assume that  $m_1=m_2=m_3=1$ and that  $\{ 1,2,3\} \subset I_1(v)$.  The generic lines $L_1, L_2, L_3$ in $\hat \bP^2$ induce three generic points $\bP^0_s$ in $\bP^2$. We need to find a $g \in G$ such that the points $\bP^0_s \in g \cdot l_s$ for $s \in {1,2,3}$. Again, this is impossible by the geometry of the problem. Indeed, recall that the intersection points of the lines $l_s \cap l_A$ are fixed. We can find two lines passing through 
$\bP^0_1$ and $\bP^0_2$, but those two lines will intersect at $p(I_1)$, and thus determine the position of all the other lines in $I_1(v)$. Therefore, a generic $\bP^0_3$ will not be contained in $g \cdot l_3$, and therefore $\varphi_v(X) \cdot \mathbb L_{\vec m} =0$. \\ 
\indent We continue with the $(\Leftarrow)$ direction of the claim.
There are three $L_s$ of codimension one, 
and we can suppose that  
$s \in \{ 1,2,3\}$. By duality, they induce  three points in general position in $\bP^2$. The statement follows because we can find three lines that pass through these three points as along as the lines are in general position. This holds, because $m_i(v) \leq 2$ implies that $\{1,2,3\}$ is not a subset of $I_i(v)$ for any $i$. \end{proof}

By Expression \ref{ZX} in Proposition \ref{prop:homclass}, our statement follows if we prove that for a given $\vec m$, and any sha $X=\cup_v X_v$ parametrized by $R_{1^n}$, there exists a unique component $X_v$ satisfying the criteria of Claim \ref{lemma3}. 
The following argument uses the description of the dual graph of the  $X$, which is a rooted tree by Lemma \ref{sing}.
We start with the root component $X_0$. There is no line coinciding with $l_A$ in $\varphi_0(X)$, and so $m_A(0)=0$. 
Thus there are two options: \begin{enumerate} \item Either $m_i(0) \leq 2$ for all $i$, or \item there exists an $i$ such that $m_i(0)=3$.  \end{enumerate} 

\textbf{Case (1):} If  $m_i(0) \leq 2$ for all $i$, then $\varphi_0(X) \cdot \mathbb L > 0$. To show uniqueness, 
recall that  $\sum_{i = 1,2,3} m_i = 3$, and that $m_i(v) \leq 2$ for all $i$. 
Therefore, the root has at least two branches,  and each
of those branches has at least one index $i_0$ such that $m_{i_0}=1$. Then $I_A(v)$ contains at least one of these indices for every other component $v \neq 0$, because at least one those branches is contracted with its line $i_0$ that overlaps with $l_A$. Therefore, $m_A(v) >0$, and thus $\varphi_v(X) \cdot \mathbb L_{\vec m} = 0$.

\textbf{Case (2):} If there exists an $i$ such that $m_i(0)=3$, then 
 $\varphi_0(X) \cdot \mathbb L_{\vec m} =0$. Thus we may suppose after relabeling, that $m_1(0)=3$, and that $m_1=m_2=m_3=1$ with $\{1,2,3 \} \subset I_1(0)$. This means that there is a branch starting from the root which contains the lines $\{1,2,3 \}$. Let $X_{v'}$ be the component in that branch that 
intersects with the rooted component.  We claim that $m_A(v')=0$, because 
$I_A(v')$ denotes the set of lines in the other branches which are not in $I_1$. Those indices do not include $\{1,2,3\}$, and these indices are the only
ones of  weight one. Thus, we have two options:\begin{enumerate}
\item If $m_i(v') \leq 2$ for every $i$, then 
we have that $\varphi_{v'}(X) \cdot \mathbb L_{\vec m} > 0$. Uniqueness follows by same argument used above. There are at least two  branches starting from $v'$ with an index $j$ such that $m_j=1$. Any other $\varphi_v(X)$ will 
contain that index in $I_A(v)$,  and so $\varphi_v(X) \cdot \mathbb L_{\vec m} = 0$.
 \item If $m_i(v') =3$ for some $i$, then there is a branch starting  from the vertex $v'$ that contains the lines $\{1,2,3\}$.
\end{enumerate}
 In the last case, we repeat the above argument with the surface $X_{v''}$ that intersects $v'$ and belongs to the branch containing the lines with indices $\{1,2,3\}$. Since for any sha the tree is finite,  one of the next two things must happen.
\begin{enumerate}
\item We find a component $\hat v$ such that $\varphi_{\hat v}(X) \cdot \mathbb L_{\vec m}> 0$.
It is unique by above arguments, or
\item we arrive to the last vertex of a branch that we call $v_f$.
\end{enumerate}
In the last case, we have at most three lines in general position on 
$X_{v_f}$, 
because by assupmtion $X$ is maximally degenerated;
and there are no multiple points. Following our labeling, those lines are precisely $\{1,2,3\}$,  and so $m_A(v_f)=m_i(v_f)=0$, and 
$\varphi_{v_f}(X) \cdot \mathbb L_{\vec m} > 0$.
\end{proof} 

Next, we extend the birational map 
$\rho : R_{1^n} \dashrightarrow \overline{U} \ChowQ G$ to a regular morphism. Note that there exists at most one extension, since the image is dense and the Chow variety is separated. Furthermore, the image of an extension as above is contained in $ \overline {U} \sslash_{Ch} G$, since this Chow quotient is closed in the Chow variety.  We begin with a crucial lemma. 

\begin{definition}\cite[Definition 7.2]{gg} Let $(A,\mathfrak{m})$ be a DVR with residue field $k$ and fraction field $K$, and let $Y$ be a proper scheme. By the valuative criterion, any map $g : \spec K \to Y$ extends to a map $g : \spec A \to Y$ . We write $\lim g$ for the point $g(\mathfrak m) \in Y$ . \end{definition}

\begin{lemma}\cite[Theorem 7.3]{gg}\label{DVR}
Suppose $X_1$, $X_2$ are proper schemes over a noetherian scheme $S$ with $X_1$ normal. Let $ U \subset X_1$ be an open dense set 
and $f :U \to X_2$ an $S$-morphism. Then $f$ extends to an $S$-morphism $\hat{f}:X_1 \to X_2$ if and only if for any DVR $(K, \mathfrak{m})$ and any morphism 
$g:\text{Spec}(K) \to U$, the point $\text{lim}fg$ of $X_2$ is uniquely determined by the point $\text{lim}\;  g$ of $X_1$. 
\end{lemma}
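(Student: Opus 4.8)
The plan is to prove this as a formal extension criterion, using nothing from the rest of the paper beyond the notation $\lim g$ introduced just above. The forward implication is immediate: if $\hat f\colon X_1\to X_2$ extends $f$, then for a DVR $A$ with fraction field $K$ and a morphism $g\colon\spec K\to U$, the valuative criterion applied to the proper (hence separated) morphism $X_1\to S$ gives the canonical extension $\bar g\colon\spec A\to X_1$, and $\hat f\circ\bar g$ agrees with $f\circ g$ on $\spec K$ (since $\bar g(\spec K)=g(\spec K)\subseteq U$ and $\hat f|_U=f$); by uniqueness of extensions over the separated scheme $X_2$, this composite is the canonical extension of $fg$, so $\lim fg=\hat f(\bar g(\mathfrak m))=\hat f(\lim g)$ is a function of $\lim g$.

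For the converse I would build $\hat f$ out of the closure of the graph of $f$. First I would reduce to the case $X_1$ integral by passing to a connected component (a locally Noetherian normal scheme is a disjoint union of integral normal schemes). Let $\Gamma\subseteq X_1\times_S X_2$ be the reduced closure of the graph $\Gamma_f\cong U$ of $f$, with projections $p_1\colon\Gamma\to X_1$ and $p_2\colon\Gamma\to X_2$. Then $\Gamma$ is integral with generic point lying in the dense open $\Gamma_f$; the morphism $p_1$ is proper (a closed immersion followed by the base change $X_1\times_S X_2\to X_1$ of the proper $X_2\to S$), restricts to the isomorphism $\Gamma_f\xrightarrow{\sim}U$, hence is surjective (its image is closed and contains the dense $U$) and birational, and $p_2|_{\Gamma_f}=f\circ p_1|_{\Gamma_f}$. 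The goal becomes to show $p_1$ is an isomorphism, after which $\hat f:=p_2\circ p_1^{-1}$ manifestly extends $f$.

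The heart of the argument---and the only place the hypothesis is used---is to show $p_2$ is constant on the fibers of $p_1$. I would fix $x\in X_1$ and two points $y,y'\in p_1^{-1}(x)$. Since $\Gamma$ is integral and Noetherian, $\mathcal O_{\Gamma,y}$ is a Noetherian local domain, hence is dominated by a DVR $A$ with fraction field $K=\kappa(\Gamma)$; the corresponding arc $h\colon\spec A\to\Gamma$ sends the closed point to $y$ and the generic point into $\Gamma_f$. Then $g:=p_1\circ h|_{\spec K}\colon\spec K\to U$ has canonical extension $p_1\circ h$, so $\lim g=p_1(y)=x$; and since $h$ meets $\Gamma_f$ we get $fg=f\circ p_1\circ h|_{\spec K}=p_2\circ h|_{\spec K}$, whose canonical extension is $p_2\circ h$, so $\lim fg=p_2(y)$. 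The same construction at $y'$ gives $\lim g'=x$ and $\lim fg'=p_2(y')$, and the hypothesis forces $\lim fg=\lim fg'$, i.e. $p_2(y)=p_2(y')$.

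To finish: each fiber $p_1^{-1}(x)$ is proper over $\kappa(x)$ and maps to a single point $z\in X_2$ by the previous step, so its reduction sits inside $\spec(\kappa(x)\otimes_{\kappa(s)}\kappa(z))$ and is therefore affine; an affine scheme proper over a field is finite, so $p_1$ is proper with finite fibers, hence finite. A finite birational morphism onto the normal integral scheme $X_1$ is an isomorphism---$\mathcal O_{X_1}\to(p_1)_*\mathcal O_\Gamma$ is then a finite birational subextension of the function field, equal to $\mathcal O_{X_1}$ by integral closedness---so $p_1$ is an isomorphism and $\hat f:=p_2\circ p_1^{-1}$ is the desired extension. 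I expect the fiberwise constancy of $p_2$ to be the only substantive point; the graph-closure construction, the affine-plus-proper-implies-finite step, and the appeal to normality via Zariski's main theorem are formal, and the only technicalities to keep straight are the existence of a DVR dominating a Noetherian local domain and the compatibility of the various canonical extensions with composition.
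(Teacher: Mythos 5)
The paper does not prove this lemma at all: it is quoted verbatim from \cite{gg}, Theorem 7.3, so there is no in-paper argument to compare with, and your proposal should be judged as a self-contained proof. As such it is correct, and it is essentially the standard (and the cited source's) argument: the forward direction is the uniqueness half of the valuative criterion for the separated $X_2$, and for the converse the hypothesis is used exactly once, to show that $p_2$ is constant on the fibers of $p_1$ by producing, for each point $y$ of the graph closure $\Gamma$, a DVR dominating $\mathcal{O}_{\Gamma,y}$ with fraction field $\kappa(\Gamma)$; properness, finiteness of fibers, and normality of $X_1$ (finite birational onto normal is an isomorphism) then finish the proof. Two small points deserve a line each in a final write-up: the domination step requires $\mathcal{O}_{\Gamma,y}$ not to be a field, which only fails when $p_1(y)$ is the generic point, where the fiber is a single point anyway since $p_1$ is an isomorphism over $U$; and the assertion that the reduced fiber is affine should be justified by noting that, being reduced with set-theoretic image $\{(x,z)\}$, it factors through $\spec\bigl(\kappa(x)\otimes_{\kappa(s)}\kappa(z)\bigr)$, and this factoring map is a proper immersion, hence a closed immersion, so the fiber is affine and proper over $\kappa(x)$, hence finite. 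With these elaborations your argument is complete and matches the intended reading of the criterion (two arcs with the same limit in $X_1$, possibly for different DVR structures on the same function field, have the same limit in $X_2$).
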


Our argument for the following result follows the same structure as the one used for the proof of $\overline M_{0,n}$  (see \cite[Thm 1.1]{noahrnc}), 
and $T_{d,n}$ (see \cite[Sec. 4.3]{pn}).
\begin{proposition}
There is a morphism $\rho : R_{1^n} \rightarrow (\hat \PP^2)^n\ChowQ G$ that associates to each closed point $X =\cup_{v \in I} X_v$ of 
$R_{1^n}$ a cycle with homology class 
\begin{align*}
\sum_{\vec m:=(m_1, \ldots,m_n)}\left( [\PP^{m_1}]\otimes\cdots\otimes[\PP^{m_n}]\right)
& &
0 \leq m_i \leq 1, & & \sum_{i=1}^n m_i=3.
\end{align*}
\end{proposition}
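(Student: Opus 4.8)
The plan is to extend the birational map $\rho : R_{1^n} \dashrightarrow \overline U \ChowQ G$ of Proposition \ref{prop:openquot} to a morphism by means of the valuative criterion of Lemma \ref{DVR}, taking $X_1 = R_{1^n}$ (which is smooth, hence normal, and proper by Theorem \ref{thm:iso}), $X_2 = \Chow\big((\hat\bP^2)^n\big)$ (proper), and $U \subset R_{1^n}$ the dense interior on which $\rho$ is already defined. Thus it suffices to show that for every DVR $(A, \mathfrak m)$ with fraction field $K$ and every morphism $g : \spec K \to U$, the limit $\lim \rho g$ in the Chow variety is uniquely determined by $\lim g \in R_{1^n}$; and simultaneously one must identify this limit with the configuration cycle $Z(X)$ of Definition \ref{def:compconfig} attached to $X = \lim g$, so that the extended map indeed has the asserted effect on closed points.

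First I would spread $g$ out to $\spec A \to R_{1^n}$ (valuative criterion of properness) and pull back the universal family $\calU_{1^n} \to R_{1^n}$ of Lemma \ref{lem:universaltotal} to obtain a flat proper family of shas $\calX \to \spec A$ whose generic fiber is an irreducible arrangement $(\bP^2, \sum_{i=1}^n l_i + l_A)$ — dual to a point $p(t) \in \overline U$ — and whose special fiber is $X = \lim g = \bigcup_{v \in I} X_v$. For each component $X_v$, the contraction morphism $\varphi_v : X \to \bP^2$ of Proposition \ref{sing} can be realized over $\spec A$ (after a finite ramified base change if necessary) by composing the family with a suitable one–parameter rescaling of coordinates that ``zooms into'' $X_v$; this produces a family of point configurations in $\hat\bP^2$ degenerating to $\varphi_v(X)$, so that $\overline{G\cdot\varphi_v(X)}$ appears as a flat limit of $\overline{G\cdot p(t)}$. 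Matching up multiplicities along the branches of the rooted dual tree of $X$ (Proposition \ref{sing}), the flat limit of the cycles $\overline{G\cdot p(t)}$ in the Chow variety is then exactly $\sum_{v\in I}\overline{G\cdot\varphi_v(X)} = Z(X)$. This is the main obstacle: the local analysis of the degenerating family of orbits and the verification that no spurious components or higher multiplicities occur in the flat limit. It follows, however, the same pattern as the corresponding arguments for $\overline M_{0,n}$ in \cite{chow} and \cite{noahrnc} and for $T_{d,n}$ in \cite{pn}, with Proposition \ref{sing} ensuring the combinatorics of the components is governed by a rooted tree. Since $\lim \rho g = Z(X)$ depends only on $X = \lim g$, Lemma \ref{DVR} yields a morphism $\rho : R_{1^n} \to \Chow\big((\hat\bP^2)^n\big)$ with $\rho(X) = Z(X)$; as $\rho$ maps the dense interior into $\overline U \ChowQ G$ (Proposition \ref{prop:openquot}), which is closed in the Chow variety, the entire image of $\rho$ lies in $\overline U \ChowQ G$.

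It then remains to compute the homology class of $\rho(X) = Z(X)$. The homology class of a cycle is locally constant in a flat family, hence constant on each irreducible component of the Chow variety; since the interior of $R_{1^n}$ is irreducible and dense, $[Z(X)]$ is the same for every $X \in R_{1^n}$ and equals the class at a generic point, which by Proposition \ref{prop:homclass} has the form $\sum_{\vec m} c_{\vec m}\left([\PP^{m_1}]\otimes\cdots\otimes[\PP^{m_n}]\right)$ with every $c_{\vec m}\in\{0,1\}$. On the other hand, every point of $R_{1^n}$ specializes to a maximally degenerate sha (Lemma \ref{lemma:maxdegen}), for which Proposition \ref{prop:maxdegenclass} shows $c_{\vec m} = 1$ precisely when no entry of $\vec m$ equals $2$, i.e. when $0 \le m_i \le 1$ for all $i$, and $c_{\vec m}=0$ otherwise. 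Combining these two facts gives
$$
[\rho(X)] = \sum_{\vec m = (m_1,\ldots,m_n)} \left([\PP^{m_1}]\otimes\cdots\otimes[\PP^{m_n}]\right), \qquad 0 \le m_i \le 1, \quad \sum_{i=1}^n m_i = 3,
$$
which is the asserted homology class, completing the proof.
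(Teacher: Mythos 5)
Your overall architecture matches the paper's: reduce to the valuative criterion of Lemma \ref{DVR} via Proposition \ref{prop:openquot}, attach the configuration cycle $Z(X)$ of Definition \ref{def:compconfig} to the limit sha, and invoke Propositions \ref{prop:homclass} and \ref{prop:maxdegenclass} for the homology class. But the central verification --- that the flat limit $\lim_{t\to 0} Z(X_t)$ in the Chow variety is exactly $Z(X)$, with no extra components and no multiplicities, so that the limit is uniquely determined by $X=\lim g$ --- is precisely the step you label ``the main obstacle'' and then defer to analogy with \cite{chow}, \cite{noahrnc}, \cite{pn}. The ``zoom into $X_v$ by a one-parameter rescaling after a ramified base change, and match multiplicities along the rooted tree'' sketch is not an argument: nothing in it rules out embedded or spurious components in the limit cycle, nor multiplicities bigger than one, and this is exactly what must be proved for Lemma \ref{DVR} to apply. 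So as written the proposal has a genuine gap at its core.

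The paper closes this gap without any local analysis of the degenerating orbits, by a two-step bookkeeping argument you should compare with: first, the containment $Z(X_{\mathbb C})\subseteq \lim_{t\to 0} Z(X_t)$ is proved by lifting each contraction $\varphi_v$ from the central fiber to the whole one-parameter family (the line bundle inducing $\varphi_v$ has vanishing higher cohomology, so Grauert's theorem applies) and using $G$-invariance and closedness of the limit; second, since taking limits in the Chow variety preserves the homology class, the coefficients satisfy the sandwich $1\le c^{0}_{\vec m}\le c^{gen}_{\vec m}\le 1$ for all $\vec m$ with every $m_i\le 1$ (the right inequality from Proposition \ref{prop:homclass}, the left from Proposition \ref{prop:maxdegenclass} together with the containment), while coefficients with some $m_i=2$ vanish because the intersections $l_i\cap l_A$ are fixed. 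Equality of effective cycles with one contained in the other and with equal classes then forces $\lim_{t\to 0} Z(X_t)=Z(X_{\mathbb C})$. Note also that your final paragraph implicitly uses this same containment (to transfer the maximal-degeneration computation to the generic class), so the homology computation cannot be decoupled from the unproven limit identification; in the paper it is the mechanism that proves it, not an afterthought.
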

\begin{proof}

Consider a flat proper 1-parameter family  $X_{\Delta} \rightarrow \Delta$ where the generic fiber $X_t$ is a sha 
parametrized by the interior $R^\circ_{1^n}$. Then $X_t$ is supported in $\bP^2$ without any multiple point of multiplicity larger than two, and the central 
fiber $X_{\bC} \to \spec{\bC}$ is an arbitrary 
closed point of $R_{1^n}$. The cycle $[Z(X_t)]$ associated to a generic fiber in $X_t$ is three dimensional, and its homology class is $\delta$ (see Proposition \ref{prop:homclass}). Therefore, we have a 1-parameter family of cycles whose limit in the Chow variety we denote as  $\lim_{t \to 0} [Z(X_t)]$.
By Proposition \ref{prop:openquot} and Lemma \ref{DVR}, the existence of the morphism then
follows if we show that $\lim_{t \to 0} [Z(X_t)]$ is uniquely determined by $X_{\mathbb C}$. 
It suffices to show that:
\begin{align}\label{equal}
\lim_{t \to 0} [Z(X_t)]=[Z(X_{\mathbb C})]
\end{align}
where $[Z(X_{\mathbb C})]$ is equal to the cycle defined in Proposition \ref{prop:homclass}.

First we show that $Z(X_{\mathbb C}) \subseteq \lim_{t \to 0} Z(X_t)$ as subvarieties of  $(\hat \PP^2)^n$. 
Since $X_{\mathbb C}=\cup_{v} X_v$, by  definition of $Z(X_{\mathbb C})$, our claim follows if 
for every component $X_v$ of $X_{\mathbb C}$, we have that:
$$
\varphi_v(X_{\mathbb C}) \subset \lim_{t \to 0} Z(X_t)
\subset (\hat \bP^2)^n
$$ 
By construction $\lim_{t \to 0} Z(X_t)$ is closed and $G$-invariant. 
Therefore, our claim follows if 
$\varphi_v$ maps the points 
$(p_{1_0}, \ldots p_{n_0}) \in (\hat \bP^2)^n$ 
associated to the lines in $\varphi_v(X_{\mathbb C})$,  into 
$$\lim_{t \to 0} Z(X_t) \subset (\hat \bP^2)^n.$$
We recall that in general for shas, the contraction morphism $\varphi_v:X_{\mathbb C} \to \bP^2$ is induced by a line bundle $L_v$ that satisfies $h^i(X, L_v)=0$ for all $1 \geq i$, since $\varphi_v$ is degree 1 on the $X_v$ component and degree 0 elsewhere.
Then, by Grauert's Theorem (see Corollary III.12.9 of \cite{h}), the morphism $\varphi_v$ lifts to a morphism
from the central fiber to our 1-parameter family $X_{\Delta}$. 
Let $\varphi_v: X_{\Delta}\to  (\hat \bP^2)^n$ be that lift. For $t \neq 0$, the map 
$\varphi_v$ sends the points $p_{i_t} \in (\hat \bP^2)^n$ associated to the lines in  $\varphi_v(X_{t})$ to  $Z(X_t)$, and the morphism $\varphi_v$ is continuous. Then, $\varphi_v(X_{\mathbb C}) \subset \lim_{t \to 0}Z(X_t)$; and we have
\begin{align}\label{lhomo}
[Z(X_{\mathbb C})] \leq \lim_{t \to 0}[Z(X_t)].
\end{align}

Next, we show the equality. By Proposition \ref{prop:homclass}, we know that the homology class of the generic orbit has coeficients equal to either 0 or 1.
By the argument in the proof of Proposition \ref{prop:maxdegenclass}, we conclude that the  homology class of the generic orbit has coefficient $c_{\vec m}=0$ 
if there is an $m_i \in \vec m$ such that $m_i=2$. Indeed, it will induce a generic line $\bP^1_i \subset \bP^2$; and we cannot move any lines $l_i$ to such a line because the intersections  $l_i \cap l_A$ are fixed.   
On the other hand, for $t_0 \neq 0$ we see that:
\begin{align}\label{limitgen}
\lim_{t \to 0}[Z(X_t)] =[Z(X_{t_0})]
\end{align}
because we are taking the limit inside a Chow variety. Consequently, the homology class of the limit is the same as the homology class of the generic fiber

Expressions \ref{lhomo} and \ref{limitgen}
imply that the coefficients  $c_{\vec m}^{gen}$ in the homology class of the generic element $Z(X_{t_0})$ are necessarily  larger than or equal to the coefficients $c_{\vec m}^{0}$ associated to the central fiber $Z(X_{\mathbb C})$ . Therefore we have the following inequality 
\begin{align}
1 \leq c_{\vec m}^{0} \leq c_{\vec m}^{gen} \leq 1, 
\end{align}
The left inequality follows by Proposition \ref{prop:maxdegenclass}
and because the homology class only decreases whenever degenerating, as seen in (\ref{lhomo}).  The right inequality follows from
Proposition \ref{prop:homclass}. We conclude that there is a morphism $\rho: R_{1^n} \rightarrow (\hat \PP^2)^n\ChowQ G$.  \end{proof}
Finally, we prove  that $R_{1^n}$ is isomorphic to the normalization of our Chow quotient. 
\begin{theorem}\label{chowt}
Let $\overline {U}^n/\!\!/_{Ch} G$ be the normalization of the Chow quotient,
and let $\rho^n$ be the morphism obtained from the Stein factorization of $\rho$.
Then the morphism $$\rho^n : R_{1^n} \rightarrow  \overline {U}^n/\!\!/_{Ch} G $$  is an isomorphism.
\end{theorem}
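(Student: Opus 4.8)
The plan is to derive the result from Zariski's Main Theorem, once we know that the configuration cycle $Z(X)$ remembers the point of $\R_{1^n}$ that $X$ represents.

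First I would record the formal reductions. Let $Y$ denote the closed subvariety of the Chow variety that is the closure of the locus of generic orbit cycles, so that $Y = \overline U \ChowQ G$; it is projective, hence proper. By the preceding propositions the morphism $\rho : \R_{1^n} \to Y$ is proper (source proper, target separated) and, by Proposition~\ref{prop:openquot}, induces a birational map. Since $\R_{1^n}$ is normal by Theorem~\ref{thm:iso}, the sheaf $\rho_\ast \calO_{\R_{1^n}}$ is a finite $\calO_Y$-algebra, contained in $K(Y)=K(\R_{1^n})$, and integrally closed in it (any $s\in K(Y)$ integral over a local section of $\rho_\ast\calO$ pulls back to a function integral over $\calO_{\R_{1^n}}$, hence regular by normality). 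Therefore the Stein factorization of $\rho$ is $\R_{1^n}\xrightarrow{\rho^n}\overline U^n\ChowQ G\xrightarrow{g}Y$, with $g$ the normalization and $\rho^n$ proper, birational, with geometrically connected fibers. Since $g$ is finite, $\rho^n$ is quasi-finite if and only if $\rho$ is, and a proper birational quasi-finite morphism with connected fibers onto a normal variety is an isomorphism. Thus it suffices to prove that $\rho$ is injective on closed points.

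So the heart of the matter is the implication: if $X=\cup_{v\in I}X_v$ and $X'=\cup_{v'\in I'}X'_{v'}$ are shas parametrized by $\R_{1^n}$ with $Z(X)=Z(X')$ as cycles in $(\hat\bP^2)^n$, then $X$ and $X'$ determine the same point of $\R_{1^n}$. I would argue as in \cite{noahrnc} and \cite[Sec.~4.3]{pn}. Each summand $\overline{G\cdot\varphi_v(X)}$ is the closure of a single $G$-orbit; because $G$ has dimension $3$ and acts with trivial stabilizer on a generic configuration in which $l_A$ and the intersection points $l_i\cap l_A$ are fixed (an argument in the spirit of Lemma~\ref{lemma1}), this closure is irreducible of dimension $3$, which also matches the homology computation of Proposition~\ref{prop:homclass}. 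Consequently the equality of cycles $Z(X)=Z(X')$ identifies the multiset $\{\overline{G\cdot\varphi_v(X)}\}_{v}$ with $\{\overline{G\cdot\varphi_{v'}(X')}\}_{v'}$ (with multiplicities), and since an orbit closure recovers the orbit, each contracted line arrangement $\varphi_v(X)$ is recovered up to the $G$-action, i.e.\ up to the equivalence that defines single-component points of $\R_{1^n}$.

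It then remains to reconstruct $X$, as a point of $\R_{1^n}$, from the collection $\{\varphi_v(X)\}_v$ of its contractions. Here I would invoke Proposition~\ref{sing}: there are no overlapping lines, $Graph(X)$ is a rooted tree whose root is the unique component carrying $l_A$, and each broken line $l_i$ is a connected chain. The root contraction is singled out as the unique one in which no $l_i$ overlaps $l_A$; once a component $X_v$ has been identified, the distribution of the $l_i$ among its contraction and among the contractions of adjacent deeper components (a line $l_i$ crosses the double locus out of $X_v$ precisely when it appears as a line overlapping $l_A$ in the next component's contraction) determines which lines pass into which branch, and recursively descending the tree reconstructs both $Graph(X)$ and the gluing. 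Running this reconstruction identically for $X$ and for $X'$ from the matched data yields an isomorphism $X\cong X'$ compatible with the markings, hence $[X]=[X']$. The step I expect to be the main obstacle is exactly this last reconstruction: passing rigorously from "each contracted arrangement is known up to $G$" to "the glued surface with its broken lines is known" requires showing the identifications of the gluing curves $\bP^1$ carry no hidden moduli, which is where the absence of overlapping lines and the rooted-tree structure of Proposition~\ref{sing} do the essential work.
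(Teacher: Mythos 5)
Your proposal is correct and follows essentially the same route as the paper: reduce via Zariski's Main Theorem (through the Stein factorization/normalization) to quasi-finiteness, i.e.\ injectivity, of $\rho$, and then show that the configuration cycle determines each contracted arrangement $\varphi_v(X)$ up to the $G$-action and hence, using the rooted-tree structure from Proposition \ref{sing}, the sha itself. The paper organizes this injectivity check stratum by stratum (general point of a boundary divisor $D_I$, then ``iterate into deeper strata''), whereas you reconstruct $X$ directly from the matched multiset of orbit closures, but the key inputs and the logic are the same.
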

\begin{proof}
We use the Zariski's Main Theorem which asserts that a quasi-finite birational morphism to a normal, Noetherian scheme is an open immersion.  $R_{1^n}$ is normal, and 
our morphism $\rho $ factors through the normalization of the Chow quotient. Then, $\rho^n$ is surjective and birational; and the crux of the result is to prove that $\rho$ is quasi-finite. 
By Proposition \ref{prop:openquot}, we already know the map $\rho$ is injective on the interior $R^{\circ}_{1^n}$; and we observe that no point of the boundary divisor in  $R_{1^n}$ can be sent to the same cycle as a point of the open stratum, since the image of the latter is an irreducible cycle whereas the image of the former is not.
Therefore, we only need to show that the restriction of $\rho$ to the boundary in $R_{1^n}$ is quasi-finite. The boundary is the union of a finite number of divisors, and so it will be enough to show our claim for a single component $D_I$ of the boundary. 
The general point of the divisor $D_I$ parametrizes a sha 
$X = \bP^2 \cup \bl$, where $\bl$ contains the line $l_A$. 
%Let us write $\{1, \ldots,n \}= I_0 \sqcup I_1$ where $I_0$ indexes the lines completely contained  $\bl$ and $I_1$ indexes those that crosses into $\bP^2$ so $3 \leq |I_1| \leq (n-1)$.  
For example, the second sha in Figure \ref{examples} is parametrized by $D_{2345}$. 
The morphism $\rho$ sends $X$ to the union of the two cycles: 
$$
\overline{ G \cdot \varphi_0(X)} \cup \overline{ G \cdot \varphi_1(X)}
$$
If another sha $\tilde X$ parametrized by the interior of $D_I$ has the same image as $X$, that is $\rho(X)= \rho(\tilde X)$, then their cycles coincide.   
This means that the image of their reduction morphisms satisfy $ \varphi_i(\tilde X) \in  G \cdot \varphi_i(X)$. However, 
$G \subset SL(3, \mathbb C)$, which implies that $X \cong \tilde X$.   Therefore, $\rho$ is injective on the interior of $D_I$.  A straightforward iteration of this argument, using the fact that our dual graphs are always trees, applies to the deeper strata, and shows that $\rho$ is injective on $D_I$ itself.
\end{proof}
%%%%%%%%%%%%%%%%%%%%%%%%%

%%%%%%%%%%%%%%%%%

\bibliographystyle{alpha}
\bibliography{GoodLoci}

%%%%%%%%%%%%%%%%%%%%%%%%%%%

\end{document}